\documentclass{amsart}
\usepackage[margin=1.5in]{geometry}
\usepackage{amsthm}
\usepackage{amssymb}
\usepackage[non-compressed-cites]{amsrefs}
\usepackage{esint}
\usepackage{caption}
\usepackage{enumitem}
\usepackage{mathtools}
\usepackage{xcolor}
\usepackage{microtype}
\usepackage{multirow}
\usepackage{array}
\usepackage{tikz}
\usepackage{threeparttable}
\usepackage{hyperref}

\usetikzlibrary{arrows.meta,decorations.markings,calc}

\allowdisplaybreaks

\newcommand{\R}{\mathbb{R}}
\newcommand{\Sph}{\mathbb{S}}
\newcommand{\del}{\partial}
\newcommand{\Ric}{\operatorname{Ric}}
\renewcommand{\div}{\operatorname{div}}
\newcommand{\Rm}{\operatorname{Rm}}
\newcommand{\tr}{\operatorname{tr}}

\newcommand{\dvol}{\mathrm{dvol}}
\newcommand\inner[2]{\langle #1, #2 \rangle}
\newcommand{\norm}[1]{\left\lVert #1 \right\rVert}
\renewcommand{\O}{\mathcal{O}}
\renewcommand{\L}{\mathcal{L}}
\newcommand{\Z}{\mathbb{Z}}

\newcommand{\CP}{\mathbb{CP}}
\newcommand{\RP}{\mathbb{RP}}

\newcommand{\I}{\mathcal{I}}
\newcommand{\C}{\mathbb{C}}
\renewcommand{\epsilon}{\varepsilon}

\newtheorem{theorem}{Theorem}[section]
\newtheorem{corollary}[theorem]{Corollary}
\newtheorem{lemma}[theorem]{Lemma}
\newtheorem{definition}[theorem]{Definition}
\newtheorem{proposition}[theorem]{Proposition}

\numberwithin{equation}{section}
 
\hypersetup{
    colorlinks,
    linkcolor={red!60!black},
    citecolor={blue!80!black},
}

\setlist[itemize]{leftmargin=3em}
\setlist[enumerate]{leftmargin=3em}
\mathtoolsset{showonlyrefs}

\title[An analytical characterization of Eguchi--Hanson space]{An analytical characterization of Eguchi--Hanson space and its higher-dimensional analogs}
\author[Michael B. Law]{Michael B. Law}
\address{MIT, Department of Mathematics, 77 Massachusetts Avenue, Cambridge, MA 02139, USA.}
\email{mikelaw@mit.edu}

\begin{document}

\begin{abstract}
    Let $(M,g)$ be a complete 4-dimensional Ricci-flat ALE orbifold with finitely many orbifold points and group at infinity $\Z_2$. We prove that if the $L^2$ kernel of its Lichnerowicz Laplacian has dimension at most 3, then $(M,g)$ is either the Eguchi--Hanson space or the flat orbifold $\R^4/\Z_2$. A similar uniqueness result is proved for Calabi's higher-dimensional analogs of the Eguchi--Hanson space among Ricci-flat K\"ahler ALE orbifolds with group at infinity $\Z_m$.
\end{abstract}

\maketitle

\tableofcontents

\section{Introduction}

Ricci-flat asymptotically locally Euclidean (ALE) spaces lie at the intersection of differential geometry and mathematical physics. In geometry, they arise in the compactness theory and desingularization of Einstein metrics \cites{anderson,bkn89,biquard,morteza-viaclovsky,donaldson-exposition,ozuch}, as well as in the construction of manifolds with special holonomy \cites{joyce,joyce-karigiannis}. In physics, complete 4-dimensional Ricci-flat ALE spaces play a central role in Euclidean quantum gravity \cites{hawking-GI,eguchi-hanson} and form a class of gravitational instantons. They also arise in various string theories \cites{BPS,ALE-string}. Let us first state our definition of an ALE space, which allows for orbifold singularities in a compact set.

\begin{definition} \label{def:ALE}
    An \emph{asymptotically locally Euclidean (ALE) space} is a connected Riemannian orbifold $(M^n,g)$ such that there is a finite subgroup $\Gamma \subset SO(n)$ acting freely on $\Sph^{n-1}$, a compact set $K \subset M$, a radius $R>0$, a diffeomorphism $\Phi: M \setminus K \to (\R^n \setminus B_R(0))/\Gamma$, and a decay rate $\tau > 0$ such that
    \begin{align} \label{eq:ALE-diffeo}
        |\nabla^k(\Phi^*\bar{g} - g)|_g \leq \O(r^{-\tau-k})
    \end{align}
    as $r \to \infty$ for each $k \in \mathbb{N}_0$. Here, $\bar{g}$ is the Euclidean metric on $(\R^n \setminus B_R(0))/\Gamma$, $\nabla$ is the Levi-Civita connection with respect to $g$, and $r$ is the Riemannian distance from a point with respect to $g$. We call $\tau$ the \emph{order} of $(M,g)$, and $\Gamma$ its \emph{group at infinity}. If $\Gamma = \{1\}$, then $(M,g)$ is called \emph{asymptotically Euclidean}.
\end{definition}

The prototypical example of a nontrivial (i.e. nonflat) 4-dimensional Ricci-flat ALE space is \emph{Eguchi--Hanson space} \cite{eguchi-hanson}. This is a complete Ricci-flat ALE metric on the cotangent bundle of $\Sph^2$ with group at infinity $\Z_2$. The metric is hyperk\"ahler and $U(2)$-invariant with cohomogeneity one. See \S\ref{subsec:EH} for further details. We remark that in addition to its underlying physical motivations, Eguchi--Hanson space appears as a common building block in geometric gluing constructions; see for instance \cites{biquard,brendle-kapouleas,donaldson-exposition,joyce-karigiannis}.

Soon after Eguchi--Hanson space was discovered, Gibbons and Hawking \cite{gibbons-hawking} produced a large class of hyperk\"ahler (thus Ricci-flat) gravitational instantons, including ALE spaces with groups at infinity $\Z_k$ ($k \geq 1$). The latter spaces were also constructed by Hitchin using twistor methods \cite{hitchin-gravitons}. Kronheimer \cite{kronheimer} then unified the known constructions of ALE spaces: he constructed and classified all simply-connected hyperk\"ahler ALE 4-manifolds as hyperk\"ahler quotients, exhibiting an ADE correspondence and identifying each space with the minimal resolution of $\C^2/\Gamma$ for $\Gamma \subset SU(2)$.

Motivated by these developments, Bando, Kasue, and Nakajima \cite{bkn89} conjectured that every complete simply-connected Ricci-flat ALE 4-manifold $(M,g)$ is hyperk\"ahler and hence falls under Kronheimer's classification. While this conjecture remains open in general, it is known to hold under additional assumptions. Nakajima \cite{nakajima-spin} proved it in the affirmative assuming $M$ is spin and $\Gamma \subset SU(2)$. Lock and Viaclovsky \cite{lock-viaclovsky} proved that if $M$ is diffeomorphic to a minimal resolution of $\C^2/\Gamma$ or an iterated blowup thereof, where $\Gamma \subset U(2)$, then $\Gamma \subset SU(2)$ and $g$ is hyperk\"ahler. More recently, there has been progress on uniqueness of Eguchi--Hanson space, which is a special case of the Bando--Kasue--Nakajima conjecture. Li \cite{li-4dale} proved that if $(M,g)$ is a 4-dimensional Ricci-flat ALE manifold with $\Gamma \subset SU(2)$ and admits a Hermitian non-K\"ahler structure, then $(M,g)$ is Eguchi--Hanson space with the opposite orientation. Araneda and Lucietti \cite{araneda-lucietti} proved a similar uniqueness result assuming $(M,g)$ is toric, Hermitian and non-K\"ahler, without any assumptions on $\Gamma \subset SO(4)$.

In this paper, we take a different angle. Rather than imposing global symmetry, we establish uniqueness of certain Ricci-flat ALE spaces under an analytical condition on the Lichnerowicz Laplacian $\Delta_L$. On a Ricci-flat space, $\Delta_L$ operates on symmetric 2-tensors $h$ by
\begin{align} \label{eq:lichnerowicz-laplacian}
    \Delta_L h = \Delta h + 2\Rm(h),
\end{align}
where $(\Delta h)_{kl} = g^{ij} \nabla^2_{ij} h_{kl}$ is the rough Laplacian and $\Rm(h)_{ik} = \Rm_{ijkl}h^{jl}$ is the action of the curvature tensor. The operator $\Delta_L$ is (a multiple of) the linearized Ricci curvature in a suitable gauge, and its kernel elements represent metric variations that preserve the Ricci-flat condition to first order.

On ALE spaces, it is natural to focus on the $L^2$ kernel $\ker_{L^2}(\Delta_L)$. In gluing problems for Einstein metrics, such kernel elements account for potential obstructions to resolving orbifold singularities by gluing in ALE manifolds. In Euclidean quantum gravity, elements of $\ker_{L^2}(\Delta_L)$ are known as \emph{normalizable zero modes} of the gravitational instanton, and they influence the expansion of the gravitational path integral.

Our first result is a classification of 4-dimensional Ricci-flat ALE spaces with group at infinity $\Z_2$ when $\ker_{L^2}(\Delta_L)$ is sufficiently small in dimension.
\begin{theorem} \label{thm:EH-characterization}
    Let $(M^4,g)$ be a complete 4-dimensional Ricci-flat ALE space with finitely many orbifold points and group at infinity $\Z_2$. If $\dim\ker_{L^2}(\Delta_L) \leq 3$, then $(M,g)$ is isometric to either the Eguchi--Hanson space or the flat orbifold $\R^4/\Z_2$.
\end{theorem}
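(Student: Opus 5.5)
The plan is to produce enough explicit elements of $\ker_{L^2}(\Delta_L)$ from the asymptotic geometry, and then use the dimension bound to force rigidity. On a Ricci-flat ALE space with group at infinity $\Gamma=\Z_2\subset SO(4)$ acting by $-1$, the metric has an asymptotic expansion $g=\bar g+h_4+O(r^{-5})$ in suitable (harmonic/divergence-free) coordinates. The leading term $h_4$ is a $\Z_2$-invariant decaying solution of the linearized Einstein equation on $\R^4$, homogeneous of degree $-4$. We would first classify such terms, modulo gauge. For $\Gamma=\Z_2$ the relevant space should be spanned by the Eguchi--Hanson-type terms, of the form $r^{-6}$ times quadratic expressions in $x$ built from constant self-dual or anti-self-dual $2$-forms. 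This gives a $3+3$-dimensional space $\Lambda^+\oplus\Lambda^-$.

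\textbf{Step 1: a Bochner/Kato argument for $\ker_{L^2}(\Delta_L)$.} Take parallel or Killing data at infinity, for example a constant $2$-form $\omega$ on $\R^4/\Z_2$. Solve $\Delta\tilde\omega=0$ on $M$ for the harmonic $2$-form asymptotic to $\omega$. Since $\Gamma=\Z_2$ preserves all constant $2$-forms, there is a $6$-dimensional space of such forms, and $L^2$ harmonic forms add nothing when $b_2$ is small. From a pair consisting of a harmonic self-dual form $\alpha$ and an anti-self-dual form $\beta$, the symmetric product $h=\alpha\circ\beta$ (that is, $\alpha_{ik}\beta_{kj}$) is traceless. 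On a Ricci-flat $4$-manifold we have the Weitzenb\"ock-type identity $\Delta_L(\alpha\circ\beta)=(\Delta\alpha)\circ\beta+\alpha\circ(\Delta\beta)+2\nabla\alpha\cdot\nabla\beta$. The aim is to choose $\alpha$ parallel, or to show $\nabla\alpha$ decays fast enough that $h$ is $L^2$ after subtracting its constant limit.

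An alternative, more robust route is to differentiate the metric itself. Asymptotic symmetries of $\R^4/\Z_2$ (rotations in $SO(4)$, $6$-dimensional) act on $g$. The infinitesimal variations $\mathcal L_X g$, for $X$ asymptotic to a rotation, fixed by a gauge correction to be divergence-free, give elements of $\ker\Delta_L$ decaying like $r^{-4}$, hence in $L^2$ in dimension $4$ (since $r^{-8}r^3$ is integrable). Such an element vanishes exactly when $X$ extends to a Killing field of $g$.

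\textbf{Step 2: counting.} The resulting map from $\mathfrak{so}(4)$ to $\ker_{L^2}(\Delta_L)$ has kernel the isometry algebra $\mathfrak{isom}(g)$. We would check injectivity of $\mathfrak{so}(4)/\mathfrak{isom}\to\ker_{L^2}$ via the leading asymptotics: the leading term is $\mathcal L_{X}h_4$, which is nonzero unless $X$ preserves $h_4$. Hence $\dim\ker_{L^2}\ge 6-\dim\mathfrak{isom}$, and also $\ge 6-\dim\mathrm{stab}(h_4)$.

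If $h_4\neq 0$, the stabilizer of a nonzero element of $\Lambda^+\oplus\Lambda^-$ under $SO(4)=SU(2)\times SU(2)/\pm1$ has dimension at most $4$. So the bound $\le 3$ forces $\dim\mathfrak{isom}\ge 3$, and more precisely forces an $SU(2)\times U(1)$, i.e. $U(2)$, cohomogeneity-one symmetry. Here the extra Eguchi--Hanson-type $L^2$ kernel element (the scaling/“size” mode $\partial_a g$, which is also $L^2$ and independent) would need to be added to make the count sharp, giving at least $(6-4)+1=3$ in the Eguchi--Hanson case.

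If $h_4=0$, the mass-type decay improves. Ricci-flat ALE metrics with vanishing $r^{-4}$ term should be flat, for example via the Bishop--Gromov volume ratio and the asymptotic volume ratio argument. Then $M=\R^4/\Z_2$.

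\textbf{Step 3: the symmetric case.} With $U(2)$ cohomogeneity-one symmetry, the Ricci-flat equation reduces to ODEs whose complete ALE solutions with group $\Z_2$, allowing finitely many orbifold points, are Eguchi--Hanson (either orientation) or flat. This ODE classification is classical.

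The main obstacle is Step 2: proving that $X\mapsto$ (gauge-fixed $\mathcal L_Xg$) lands in the $L^2$ kernel and that its image has the claimed dimension. That requires a precise asymptotic expansion of $g$ with an identified $r^{-4}$ term and a Fredholm gauge-fixing argument on weighted spaces. I would first try harmonic coordinates together with Cheeger--Tian type expansions.
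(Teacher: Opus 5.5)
The Killing-field count in your ``more robust route'' is exactly the paper's mechanism (Propositions \ref{prop:approx-KF} and \ref{prop:exact-KFs}). Harmonic vector fields $W$ asymptotic to the $6$ rotations of $\R^4/\Z_2$ give $\mathcal{L}_W g\in\ker_{L^2}(\Delta_L)$, which vanishes iff $W$ is Killing, so the hypothesis yields at least $6-3=3$ independent Killing fields.

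\textbf{The gap.} Nothing you write upgrades this to a $U(2)$ cohomogeneity-one symmetry. By this count the hypothesis gives only a $3$-dimensional symmetry algebra. That leaves two situations your Step~3 never addresses:
\begin{enumerate}
\item $SU(2)$ acting transitively on the CMC leaves $\RP^3$, with a priori non-Berger orbit metrics $\lambda^2\sigma_1^2+\mu^2\sigma_2^2+\nu^2\sigma_3^2$.
\item $SO(3)$ acting non-transitively, with $\Sph^2$ orbits and cohomogeneity two on the universal cover of the end.
\end{enumerate}
Most of the paper's work goes here. In the first case it avoids solving the Bianchi IX ODE. It reads off the topology from the cohomogeneity-one structure theory, excluding the $L(4,1)$ singular orbit and using finiteness of orbifold points, so $M\cong T^*\Sph^2$ or $\R^4/\Z_2$; it then invokes Lock--Viaclovsky and Kronheimer. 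In the second case it shows the metric is a warped product over a surface with round $\Sph^2$ fibres. A local Birkhoff theorem then gives Euclidean Schwarzschild near a ray, and the order-$4$ decay kills the mass. You also never show that the Killing fields are tangent to the CMC leaves and integrate to a compact group acting on them (Corollary \ref{cor:KF-tangential}, Lemma \ref{lem:level-set-isos}); that is what makes ``cohomogeneity one'' meaningful.

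\textbf{Your proposed upgrades do not close this.}
\begin{itemize}
\item The $h_4$-stabilizer count needs the unproved classification of $h_4$ and leading-order injectivity. For the latter, note that the harmonic gauge correction of a $\Z_2$-odd field can be homogeneous of degree $-3$, so it enters at the same order $r^{-4}$. Even granting both, you get only $\dim\ker_{L^2}(\Delta_L)\ge 6-\dim\mathrm{stab}(h_4)\ge 2$. This is compatible with the hypothesis and leaves $\mathfrak{su}(2)$ open.
\item The branch $h_4=0$ fails outright. Every ALE space with group $\Z_2$ has asymptotic volume ratio exactly half the Euclidean one, so Bishop--Gromov rigidity says nothing. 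There is also no general principle that a vanishing $r^{-4}$ term forces flatness.
\end{itemize}

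\textbf{How the scaling idea can be made rigorous.} It works in general, but not as $\partial_a g$, which presupposes a family.
\begin{itemize}
\item Let $Y_0$ extend the radial field $r\partial_r$ from the ALE chart. Since $\mathcal{L}_{Y_0}g-2g=O(r^{-4})$, we get $\Delta Y_0=O(r^{-5})$. So, exactly as in Proposition \ref{prop:approx-KF}, there is a harmonic $Y$ with $Y-Y_0\in C^{k,\alpha}_{-2+\epsilon}$.
\item Then $\mathcal{L}_Y g-2g=O(r^{-3+\epsilon})$ lies in $\ker_{L^2}(\Delta_L)$, because $\Delta_L g=0$.
\item Suppose $c(\mathcal{L}_Yg-2g)+\mathcal{L}_Wg=0$ with $c\neq0$. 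Then $V=Y+c^{-1}W$ satisfies $\mathcal{L}_Vg=2g$. Its flow satisfies $|\Rm|(\phi_t x)=e^{-2t}|\Rm|(x)$ and contracts to a fixed point as $t\to-\infty$, which forces flatness.
\item Hence for nonflat $g$ you get at least $7-3=4$ Killing fields. Then $g_\rho$ is homogeneous, and Corollary \ref{cor:RP3-subgroups-transitive} gives a transitive $U(2)$ or $SO(4)$ action. This reduces you to the paper's Case (A) and bypasses its Case (B) entirely.
\end{itemize}
As written, the proposal neither carries this out nor treats the $3$-dimensional cases.
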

The flat $\R^4/\Z_2$ has $\dim\ker_{L^2}(\Delta_L) = 0$, while $\dim\ker_{L^2}(\Delta_L)=3$ is achieved on Eguchi--Hanson space (see \S\ref{subsec:EH}). By Theorem \ref{thm:EH-characterization}, no intermediate examples exist. Thus, any counterexample to the Bando--Kasue--Nakajima conjecture with group at infinity $\Z_2$ must have $\dim\ker_{L^2}(\Delta_L) \geq 4$, and therefore carries a correspondingly subtler deformation theory.

The regularity assumption in Theorem \ref{thm:EH-characterization} is natural in applications. For instance, it holds for Ricci-flat ALE spaces arising as bubbles from singularities of noncollapsed Gromov--Hausdorff limits of Einstein 4-manifolds \cites{anderson,bkn89}.

Our second result is a similar theorem for Ricci-flat K\"ahler ALE spaces in higher dimensions. These are defined as follows.
\begin{definition} \label{def:kahler-ALE}
    A K\"ahler orbifold $(M^{2m},g,J)$ of complex dimension $m$ is called a \emph{K\"ahler ALE space} if its underlying Riemannian orbifold $(M,g)$ is an ALE space with group at infinity $\Gamma \subset U(m) \subset SO(2m)$.
\end{definition}
Note that we do not stipulate convergence of the complex structure $J$ to the standard one on $\C^m/\Gamma$.

Among the earliest known Ricci-flat K\"ahler ALE spaces are what we call the \emph{Calabi spaces}. The Calabi space of complex dimension $m \geq 2$ lives on the total space of the canonical line bundle $\O_{\CP^{m-1}}(-m)$ of $\CP^{m-1}$, and the group at infinity is $\Z_m \subset SU(m)$ generated by an $m$-th root of unity times the identity. It is equipped with a complete $U(m)$-invariant cohomogeneity one K\"ahler metric. When $m=2$, the Calabi space coincides with Eguchi--Hanson space and is therefore hyperk\"ahler, but when $m \geq 3$ the Calabi space is not hyperk\"ahler. See \S\ref{subsec:calabi} for further details.

Morteza and Viaclovsky \cite{morteza-viaclovsky} computed that $\dim\ker_{L^2}(\Delta_L)=1$ on all Calabi spaces of complex dimension $m \geq 3$. We will prove an analog of Theorem \ref{thm:EH-characterization} for these spaces.

\begin{theorem} \label{thm:calabi-characterization}
    Let $(M^{2m},g,J)$ be a complete Ricci-flat K\"ahler ALE space with complex dimension $m \geq 3$ and group at infinity $\Z_m$. Assume the orbifold singular set has real codimension at least 4. If
    \begin{align} \label{eq:calabi-thm-bound}
        \dim\ker_{L^2}(\Delta_L) \leq \begin{cases}
            2m-3 & \text{if } m \neq 4, \\
            2 & \text{if } m = 4,
        \end{cases}
    \end{align}
    then $(M,g,J)$ is biholomorphically isometric to either the Calabi space or the flat K\"ahler orbifold $\C^m/\Z_m$.
\end{theorem}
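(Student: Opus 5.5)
The plan is to produce many $L^2$ elements of $\ker_{L^2}(\Delta_L)$ from the Kähler structure and from the asymptotic geometry, and to show that their count exceeds the bound \eqref{eq:calabi-thm-bound} unless $(M,g,J)$ is very special. On a Ricci-flat Kähler manifold, $\Delta_L$ preserves the splitting of symmetric 2-tensors into $J$-invariant and $J$-anti-invariant parts. The $J$-invariant part corresponds to real $(1,1)$-forms $h(J\cdot,\cdot)$. On these, $\Delta_L$ acts as the Hodge Laplacian, so $L^2$ kernel elements are $L^2$ harmonic $(1,1)$-forms. The $J$-anti-invariant part corresponds, via contraction with the holomorphic volume form (or a local one, since $\Gamma=\Z_m\subset SU(m)$ and the holonomy lies in $SU(m)$ after passing to a suitable cover), to $(0,1)$-forms valued in $\Lambda^{m-1}T^{1,0}\cong\Lambda^{m-1,0}$. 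Equivalently, they correspond to $(m-1,1)$-forms, again harmonic. Hence
\begin{align}
\dim\ker_{L^2}(\Delta_L) \geq \dim \mathcal{H}^{1,1}_{L^2} + 2\dim\mathcal{H}^{m-1,1}_{L^2},
\end{align}
with the real dimension counted appropriately. By the Hausel--Hunsicker--Mazzeo / Lockhart--McOwen theory for ALE orbifolds, $L^2$ harmonic forms in middle-ish degrees $1\le k\le 2m-1$ (away from degree considerations) are identified with the image of compactly supported cohomology in ordinary cohomology, $\operatorname{Im}(H^k_c(M)\to H^k(M))$. For $k=2<m$ this is all of $H^2$, and for $k=m$ it is the full middle cohomology.

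Next I will bound the topology from below. Since $g$ is Ricci-flat Kähler ALE and nonflat, the exceptional set is nonempty. By the work on Kähler ALE spaces (Hein--LeBrun-type arguments), $M$ is a crepant resolution or partial resolution of $\C^m/\Z_m$, or at least carries a compact exceptional divisor. The only crepant (partial) resolution of $\C^m/\Z_m$ with isolated quotient singularities is the blowup at the origin, with exceptional divisor $\CP^{m-1}$, which is the Calabi space's underlying manifold. The codimension-$4$ hypothesis ensures that orbifold strata do not spoil the Hodge-theoretic identifications. Each independent compact divisor class contributes to $H^2$, and hence to $H^{1,1}_{L^2}$, giving one kernel element each. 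Deformations of the complex structure, namely classes in $H^{m-1,1}$ or $H^1(\Theta)$ with decay, contribute pairs. I then aim to show that if $M$ is not biholomorphic to $\O_{\CP^{m-1}}(-m)$, either $b_2\ge 2$ or there are nontrivial $(m-1,1)$ classes, and that the resulting count exceeds $2m-3$. The special value $m=4$ arises because when $m=4$ the middle degree is $4=2(m-2)$. An extra coincidence in degrees lets $H^{m-1,1}$ classes be realized with lower count, so the bound there is only $2$.

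Finally, once $M$ is biholomorphic to the total space of $\O(-m)$ (or to $\C^m/\Z_m$ if there is no exceptional set), I will invoke uniqueness of ALE Ricci-flat Kähler metrics in a given Kähler class (Joyce / Conlon--Hein). The Kähler class is one-dimensional and fixed up to scaling, so after scaling $g$ is the Calabi metric. If there is no exceptional set, $H^2=0$, and the Bishop--Gromov / volume-ratio argument with group $\Z_m$ at infinity forces flatness.

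The main obstacle I expect is the middle step: excluding exotic crepant partial resolutions and other complex structures not converging to the standard one. In particular, I will need to convert each piece of extra topology into a linearly independent $L^2$ decaying $\Delta_L$-kernel element with the exact count yielding $2m-3$.
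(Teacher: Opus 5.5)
\textbf{Genuine gap.} Your approach differs from the paper's, and the step that is meant to use the hypothesis cannot work as described.

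\emph{The counting step does not reach $2m-3$.} By your own accounting, $\ker_{L^2}(\Delta_L)$ receives one element per $L^2$ harmonic $(1,1)$-form and two per $L^2$ harmonic $(m-1,1)$-form. Suppose $M$ had two independent compact divisor classes and no $L^2$ harmonic $(m-1,1)$-forms. Your lower bound would then give only $\dim\ker_{L^2}(\Delta_L)\geq 2$, which is compatible with the hypothesis for every $m\geq 3$. So no Hodge-theoretic count of this kind singles out $2m-3$. Your explanation of the $m=4$ case, a coincidence of degrees, also has no basis.

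\emph{The exclusion actually rests on an unproved classification.} What carries the exclusion in your sketch is the assertion that $(M,J)$ is a crepant partial resolution of $\C^m/\Z_m$, of which the blowup is the only nontrivial one. Once that is known, the bound on $\ker_{L^2}(\Delta_L)$ plays no role, so your counting step is either idle or unsupported. That assertion needs much more than a citation:
\begin{itemize}
\item $M$ may have orbifold points.
\item The limiting parallel complex structure at infinity is not assumed to be the standard one. For example, $J_0\oplus(-J_0)$ on $\C^k\oplus\C^{m-k}$ also commutes with the $\Z_m$-action, and the generator then acts with weights $(1,\dots,1,-1,\dots,-1)$. This changes the age and crepancy analysis, and for some $k$ the group is not even in $SU(m)$.
\item You would still need to rule out complex deformations.
\item You would need an orbifold version of the uniqueness of ALE Ricci-flat K\"ahler metrics in a K\"ahler class.
\end{itemize}
None of this is supplied.

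\textbf{How the paper uses the hypothesis.} The bound is used to produce symmetry, not to count topology. Take a basis of the $m^2$-dimensional space of Killing fields of $(\Sph^{2m-1}/\Z_m,\bar g_\infty)$. Each basis element is extended to a vector field $W$ with $\Delta W=0$ and $\L_W g\in\ker_{L^2}(\Delta_L)$ (Propositions~\ref{prop:approx-KF} and~\ref{prop:exact-KFs}). Hence at least $m^2-\dim\ker_{L^2}(\Delta_L)$ independent genuine Killing fields exist, all tangent to the CMC leaves.

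The threshold $2m-3$ is exactly what makes this number at least $m^2-2m+3$. At that dimension, a subalgebra of $\mathfrak{u}(m)$ is forced to be $\mathfrak{su}(m)$ or $\mathfrak{u}(m)$ (Lemma~\ref{lem:law-5.3}), and a homogeneous metric on the lens space is forced to be quotient-Berger (Proposition~\ref{prop:law-5.5}). The $m=4$ exception reflects the extra subalgebra $\mathfrak{sp}(2)\oplus\mathfrak{u}(1)\subset\mathfrak{u}(4)$, of dimension $11=4^2-2\cdot4+3$, and the correspondingly higher threshold $\dim\geq 14=16-2$.

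This yields a cohomogeneity-one $SU(m)$-action (Corollary~\ref{cor:SUm-action}). It follows that either $M\cong\O_{\CP^{m-1}}(-m)$ with $g=ds^2+g_{a(s),b(s)}$, or $M$ is the flat quotient (Proposition~\ref{prop:bihol-metric}). Finally, $J$ is pinned down by commuting it with curvature operators, which generate $\mathfrak{u}(m-1)$ on the lifted tangent spaces of $\CP^{m-1}$ (Corollary~\ref{cor:subalg-su}). The condition $d\omega=0$ then reduces everything to the Calabi ansatz.
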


Several remarks are in order. Firstly, Theorem \ref{thm:calabi-characterization} can be viewed as a gap theorem: under the stated regularity assumptions, no Ricci-flat K\"ahler ALE space asymptotic to $\C^m/\Z_m$ (where $m=3$ or $m \geq 5$) can have $2 \leq \dim\ker_{L^2}(\Delta_L) \leq 2m-3$, and no Ricci-flat K\"ahler ALE space asymptotic to $\C^4/\Z_4$ can have $\dim\ker_{L^2}(\Delta_L) = 2$.

Secondly, the regularity assumption is consistent with the celebrated regularity theory for noncollapsed Gromov--Hausdorff limits of Einstein metrics \cite{cheeger-naber}.

Thirdly, the proof of Theorem \ref{thm:calabi-characterization} uses the classification of homogeneous metrics on lens spaces $\Sph^{2m-1}/\Z_m$ with large isometry groups. If the isometry group has dimension above some threshold, only Berger metrics can occur. The threshold is relatively higher when $m=4$, so the tighter bound in \eqref{eq:calabi-thm-bound} serves to ensure the threshold is surpassed.

\subsection{Outline of the paper}

In \S\ref{sec:preliminaries}, we lay out notations and conventions, then provide a brief summary of the Eguchi--Hanson and Calabi spaces.

In \S\ref{sec:symmetry-principles}, we work on an arbitrary complete Ricci-flat ALE space with nontrivial group at infinity. We develop a symmetry principle that converts bounds on $\dim\ker_{L^2}(\Delta_L)$ into the existence of global Killing fields tangential to a constant mean curvature foliation of the end. Moreover, the Killing fields integrate to a global isometric Lie group action. This generalizes a symmetry principle developed by the author for Ricci solitons with certain asymptotics \cite{law}, which in turn built on Brendle's seminal work on the Bryant soliton \cites{brendle2013rotational}. Some of the ingredients used later in \S\ref{sec:eh-result}--\ref{sec:calabi-result} are also adapted from \cite{law}, but the case analyses and classification arguments are specific to the ALE setting.

In \S\ref{sec:eh-result}, we prove Theorem \ref{thm:EH-characterization}. Taking $(M,g)$ as in the theorem, we use the results of \S\ref{sec:symmetry-principles} to find a 3-dimensional Lie group $G$ acting isometrically on $(M,g)$. If $G$ acts by cohomogeneity one, then well-known facts about homogeneous metrics on $\Sph^3$ and $\RP^3$ enable us to determine all possibilities for $G$. The structure theory of cohomogeneity one spaces then leads to a diffeomorphism classification of $M$. Aided by results of Lock--Viaclovsky \cite{lock-viaclovsky} and Kronheimer \cite{kronheimer}, this implies $(M,g)$ is either Eguchi--Hanson space or the flat $\R^4/\Z_2$.

On the other hand, if $G$ does not act by cohomogeneity one, then we show that it acts by cohomogeneity two. Lifting to the universal cover $(\tilde{M}_\infty, \tilde{g})$ of the end of $(M,g)$, and analyzing the lifted action, we find that $(\tilde{M}_\infty, \tilde{g})$ is (essentially) isometric to a warped product with 2-dimensional base and round $\Sph^2$ fibers. Using a local variant of Birkhoff's theorem in general relativity, we construct convenient coordinates on an infinite region in the base. From this, we deduce that $\tilde{g}$ is Euclidean Schwarzschild of mass $\mu \in \R$ around an infinite ray. However, the ALE decay rate (Theorem \ref{thm:ALE-decay-rate}) forces $\mu=0$, so $\tilde{g}$ (and hence $g$) is flat on an open set. By unique continuation, $(M,g)$ must be the flat $\R^4/\Z_2$.

In \S\ref{sec:calabi-result}, we prove Theorem \ref{thm:calabi-characterization}. Taking $(M,g,J)$ as in the theorem, we use the results of \S\ref{sec:symmetry-principles} to find a Lie group $G$ of large dimension acting isometrically on $(M,g)$ with cohomogeneity one. Using the classification of homogeneous metrics on certain lens spaces with large isometry groups, we show that the principal orbits of the action are quotient-Berger spheres. The structure theory of cohomogeneity one spaces then leads to a diffeomorphism classification of $M$ and a reduction of $g$ to an ODE system. If $(M,g,J)$ is not the flat $\C^m/\Z_m$, we further use the K\"ahler condition and the form of $g$ to determine $J$ explicitly. This reduces everything to the Calabi ansatz, from which it follows that $(M,g,J)$ is a Calabi space.

\section{Preliminaries} \label{sec:preliminaries}

\subsection{Notation and conventions}

All Riemannian orbifolds are assumed smooth throughout.
Given a Riemannian orbifold $(M,g)$, we will often abbreviate $g = \inner{\cdot}{\cdot}$. Denote by $\nabla$ the Levi-Civita connection, and $\Delta = \tr_g \circ \nabla^2$ the Laplacian on tensor fields. Our convention for the Riemann curvature tensor is
\begin{align} \label{eq:Rm-convention}
    \Rm(X,Y)Z = \nabla_Y \nabla_X Z - \nabla_X \nabla_Y Z + \nabla_{[X,Y]}Z.
\end{align}

By the Myers--Steenrod theorem (see \cite{isom-orb} for orbifolds), the isometry group $\I(M,g)$ of $(M,g)$ is a Lie group acting smoothly on $(M,g)$. Its identity component is denoted by $\I^0(M,g)$. The Lie algebra of $\I(M,g)$, denoted by $\mathfrak{I}(M,g)$, is the space of Killing fields on $(M,g)$, i.e. vector fields $X$ on $M$ such that $\L_X g = 0$ where $\L$ is the Lie derivative.

Given a Lie group $G$ and a closed subgroup $H$ of $G$, the centralizer and normalizer of $H$ in $G$ are denoted $C_G(H)$ and $N_G(H)$ respectively; both are closed subgroups of $G$. We let $C^0_G(H)$ and $N^0_G(H)$ be their identity components. The following facts will be used frequently. Their proofs are rudimentary and are therefore omitted.
\begin{lemma} \label{lem:CN-equality}
    If $G$ is a Lie group and $H$ is a discrete subgroup, then $N_G^0(H) = C_G^0(H)$.
\end{lemma}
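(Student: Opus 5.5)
The plan is to prove the two inclusions $C_G^0(H) \subseteq N_G^0(H)$ and $N_G^0(H) \subseteq C_G^0(H)$ separately. The first is immediate: every element commuting with all of $H$ normalizes $H$, so $C_G(H) \subseteq N_G(H)$. Both are closed subgroups, so $C_G^0(H)$ is a connected subgroup of $N_G(H)$ containing the identity. Hence it lies in the identity component $N_G^0(H)$.

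For the reverse inclusion I use that $N_G^0(H)$ is connected and discreteness pins down conjugation orbits. Fix $h \in H$ and consider the continuous map
\begin{align}
    \phi_h : N_G^0(H) \to H, \qquad n \mapsto n h n^{-1}.
\end{align}
It lands in $H$ because $n$ normalizes $H$. The image of the connected space $N_G^0(H)$ is a connected subset of $H$. Since $H$ is discrete in $G$, its subspace topology is discrete, so the only connected subsets are singletons. As $\phi_h(e) = h$, we get $n h n^{-1} = h$ for all $n \in N_G^0(H)$. Since $h \in H$ was arbitrary, $N_G^0(H) \subseteq C_G(H)$.

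Being connected and containing $e$, $N_G^0(H)$ then lies in the identity component $C_G^0(H)$ of $C_G(H)$. This gives equality.

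The only point requiring a little care, and the closest thing to an obstacle, is using the correct notion of discreteness. One needs that $H$ with the subspace topology from $G$ is discrete, which is what "discrete subgroup" means. A subgroup that is merely countable, or discrete only in some other topology, would break the connectedness argument. Closedness of $C_G(H)$ and $N_G(H)$ is not even needed for the argument, only that identity components are defined, which holds for any subgroup with the subspace topology. Otherwise the argument is purely topological and routine.
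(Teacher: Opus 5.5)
Your proof is correct. The paper omits the argument as rudimentary, and yours is the standard one it has in mind: $C_G(H)\subseteq N_G(H)$ gives one inclusion, and for the other, $n\mapsto nhn^{-1}$ maps the connected group $N_G^0(H)$ continuously into the discrete set $H$, so it is constant equal to $h$.
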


\begin{lemma} \label{lem:quotient-isom}
    Let $(\tilde{M},\tilde{g})$ be a simply-connected Riemannian manifold, and $\Gamma \subset \I(\tilde{M},\tilde{g})$ a finite subgroup of isometries acting freely. Then the quotient manifold $(M,g)$ has isometry group
    \begin{align}
        \I(M,g) = N_{\I(\tilde{M},\tilde{g})}(\Gamma)/\Gamma.
    \end{align}
\end{lemma}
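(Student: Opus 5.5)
The plan is to prove both inclusions directly, using the covering map $\pi:\tilde{M}\to M=\tilde{M}/\Gamma$, which is a Riemannian covering (local isometry). Because $\tilde M$ is simply connected and $\Gamma$ acts freely and properly (it is finite), $\pi$ is a universal covering and $\Gamma$ is its deck transformation group.

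\emph{Step 1: $N_{\I(\tilde M,\tilde g)}(\Gamma)$ descends.} Take $\phi\in N_{\I(\tilde{M},\tilde{g})}(\Gamma)$. Then $\phi\Gamma\phi^{-1}=\Gamma$, so $\phi$ maps $\Gamma$-orbits to $\Gamma$-orbits. It therefore induces a well-defined bijection $\bar\phi$ of $M$ with $\bar\phi\circ\pi=\pi\circ\phi$. Since $\pi$ is a local isometry and $\phi$ is an isometry, $\bar\phi$ is a local isometry, and it is bijective with inverse $\overline{\phi^{-1}}$. Hence $\bar\phi\in\I(M,g)$.

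The map $\phi\mapsto\bar\phi$ is a group homomorphism. Its kernel consists of the $\phi$ with $\pi\circ\phi=\pi$, that is, the deck transformations, and these are exactly $\Gamma$. This uses that $\Gamma$ acts freely and $\tilde M$ is connected: the deck group of the quotient covering by a properly discontinuous free action is the group itself.

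\emph{Step 2: surjectivity via lifting.} Let $\psi\in\I(M,g)$. Then $\psi\circ\pi:\tilde M\to M$ is a covering map from a simply connected space. By the lifting criterion, there is a continuous $\phi:\tilde M\to\tilde M$ with $\pi\circ\phi=\psi\circ\pi$. Locally $\phi=\pi^{-1}\circ\psi\circ\pi$ on sheets, so $\phi$ is a smooth local isometry. Lifting $\psi^{-1}$ the same way and composing the two lifts gives a lift of the identity, which is a deck transformation. Adjusting by an element of $\Gamma$ shows $\phi$ is bijective, hence $\phi\in\I(\tilde M,\tilde g)$.

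It remains to show $\phi$ normalizes $\Gamma$. For $\gamma\in\Gamma$ we have
\begin{align}
\pi\circ\phi\gamma\phi^{-1}=\psi\circ\pi\circ\gamma\circ\phi^{-1}=\psi\circ\pi\circ\phi^{-1}=\psi\circ\psi^{-1}\circ\pi=\pi.
\end{align}
So $\phi\gamma\phi^{-1}$ is a deck transformation and lies in $\Gamma$. Hence $\phi\in N(\Gamma)$ and $\bar\phi=\psi$.

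Together, Steps 1 and 2 give $\I(M,g)\cong N_{\I(\tilde M,\tilde g)}(\Gamma)/\Gamma$ as groups. This is also an isomorphism of Lie groups, since it is a continuous bijective homomorphism of Lie groups; the quotient is by a finite normal subgroup of the closed subgroup $N(\Gamma)$.

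The only point needing care is Step 2. There one must check that the lift is a global isometry, not merely a local one, and use simple connectivity to identify the deck group with $\Gamma$. Standard covering space theory handles both.
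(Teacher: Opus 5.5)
Your proof is correct, and it is the standard covering-space argument the paper has in mind when it omits the proof as rudimentary: you descend elements of the normalizer to $M$, identify the kernel with the deck group $\Gamma$, and lift isometries of $M$ using simple connectivity of $\tilde M$. Two small points to tighten: $\phi\Gamma\phi^{-1}\subseteq\Gamma$ becomes equality because $\Gamma$ is finite, and a continuous bijective homomorphism of Lie groups is an isomorphism only with second countability (via the open mapping theorem), which holds here.
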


If $\I(M,g)$ acts transitively on $(M,g)$, then we call $g$ a \emph{homogeneous} metric. Finally, if a compact Lie group $G$ acts smoothly on $M$, then the \emph{cohomogeneity} of the action is the minimal codimension of an orbit.

\subsection{Eguchi--Hanson space} \label{subsec:EH}

Let $\{X_1,X_2,X_3\}$ be a left-invariant frame on $\Sph^3 \cong SU(2)$, and $\{\sigma_1,\sigma_2,\sigma_3\}$ its dual coframe. Let $\pi: \Sph^3 \to \Sph^2$ be the Hopf fibration, chosen with fibers tangent to $X_1$. For each $\epsilon > 0$, define a metric $\tilde{g}_{\mathrm{EH},\epsilon}$ on the manifold $(0,\infty) \times \Sph^3$ by
\begin{align} \label{eq:EH-cover-metric}
    \tilde{g}_{\mathrm{EH},\epsilon} = \frac{1}{2}(r^2 + \epsilon)^{-\frac{1}{2}} (dr^2 + r^2 \sigma_1 \otimes \sigma_1) + (r^2 + \epsilon)^{\frac{1}{2}} (\sigma_2 \otimes \sigma_2 + \sigma_3 \otimes \sigma_3).
\end{align}
The metric $\tilde{g}_{\mathrm{EH},\epsilon}$ is invariant under left multiplication in $SU(2)$ and a right Hopf circle action; see Lemma \ref{lem:homog-metrics-S3}. Moreover, since the metric on each $r$-level set depends on $r$, there are no translational isometries. Thus, we have
\begin{align} \label{eq:EH-cover}
    \I^0((0,\infty) \times \Sph^3, \tilde{g}_{\mathrm{EH},\epsilon}) = SU(2) \times_{\Z_2} U(1) \cong U(2).
\end{align}
This acts with cohomogeneity one since it acts transitively on each level set of $r$.

In particular, $\tilde{g}_{\mathrm{EH},\epsilon}$ is invariant under the antipodal map on $\Sph^3$ (equivalently $q \mapsto -q$ on $SU(2)$), so it induces a metric $g_{\mathrm{EH},\epsilon}$ on $(0, \infty) \times \RP^3$. As $r \to 0$, the Hopf fibers shrink and the $\RP^3$-level sets of $r$ collapse to a round $\Sph^2$ of scale $\epsilon > 0$. Thus, attaching an $\Sph^2 \cong \CP^1$ `bolt' at $r=0$ results in a complete metric $g_{\mathrm{EH},\epsilon}$ on the smooth manifold $\O_{\CP^1}(-2) \cong T^*\Sph^2$. We call $(\O_{\CP^1}(-2), g_{\mathrm{EH},\epsilon})$ the \emph{$\epsilon$-Eguchi--Hanson space}. Changing $\epsilon$ only amounts to scaling the metric and applying a diffeomorphism, so we leave $\epsilon$ unspecified.

Since the antipodal map is central in the group \eqref{eq:EH-cover}, Lemma \ref{lem:quotient-isom} implies that $\I^0((0,\infty) \times \RP^3, g_{\mathrm{EH},\epsilon}) \cong U(2)/\Z_2$. These isometries extend to the $\Sph^2$ bolt, so $\I^0(\O_{\CP^1}(-2), g_{\mathrm{EH},\epsilon}) \cong U(2)/\Z_2$ as well. Thus, $U(2)$ acts isometrically and almost effectively on Eguchi--Hanson space with kernel $\Z_2$. Moreover, one checks using \eqref{eq:EH-cover-metric} that Eguchi--Hanson space is Ricci-flat and ALE with group at infinity $\Z_2$. It is in fact hyperk\"ahler and self-dual.

It is well-known that $\ker_{L^2}(\Delta_L)$ has dimension 3 on Eguchi--Hanson space; see \cites{biquard,brendle-kapouleas} for an explicit basis. One generator arises from scaling, and the other two generators arise from deformations of the complex structure. Thus, modulo diffeomorphisms and scaling, these generators do not produce genuinely new Einstein deformations.

\subsection{Calabi space} \label{subsec:calabi}

Let $\pi: \Sph^{2m-1} \to \CP^{m-1}$ be the Hopf fibration, where $m \geq 2$. Let $g_{\mathrm{FS}}$ be the Fubini--Study metric on $\CP^{m-1}$ with holomorphic sectional curvature 4, so that the round metric on $\Sph^{2m-1}$ of sectional curvature 1 splits as
\begin{align}
    g_{\Sph^{2m-1}} = \sigma \otimes \sigma + \pi^*g_{\mathrm{FS}},
\end{align}
where $\sigma$ is a 1-form dual to the Hopf vector field $\xi$, satisfying $d\sigma = 2\pi^*\omega_{\mathrm{FS}}$. For any $a, b > 0$, define the \emph{Berger metric} on $\Sph^{2m-1}$ by
\begin{align}
    \tilde{g}_{a,b} = a^2 \sigma \otimes \sigma + b^2 \pi^*g_{\mathrm{FS}}.
\end{align}
It is standard that $\I^0(\Sph^{2m-1}, \tilde{g}_{a,b}) \cong U(m)$ if $a \neq b$, and $\I^0(\Sph^{2m-1}, \tilde{g}_{a,b}) \cong SO(2m)$ if $a = b$.

The cyclic subgroup $\Z_m \subset U(1) \subset U(m)$ acts freely on $\Sph^{2m-1}$ by rotation of the Hopf fibers. The quotient is the \emph{lens space} which we denote by $$\Sph^{2m-1}/\Z_m.$$
Moreover, the $\Z_m$-action is isometric on $(\Sph^{2m-1},\tilde{g}_{a,b})$ for any $a,b > 0$, and is central in the isometry group, so $\tilde{g}_{a,b}$ descends to a \emph{quotient-Berger metric} $g_{a,b}$ on $\Sph^{2m-1}/\Z_m$. By Lemma \ref{lem:quotient-isom}, we have $\I^0(\Sph^{2m-1}/\Z_m, g_{a,b}) \cong U(m)/\Z_m$ whenever $a \neq b$.

Consider the complex line bundle $\O_{\CP^{m-1}}(-m)$ over $\CP^{m-1}$, which is isomorphic to the canonical line bundle of $\CP^{m-1}$. We now briefly review Calabi's construction \cite{calabi} of Ricci-flat K\"ahler ALE metrics on $\O_{\CP^{m-1}}(-m)$ using what is now called the \emph{Calabi ansatz}; a detailed account may be found elsewhere (e.g. \cite{sal}*{\S 8}). We first note that $\O_{\CP^{m-1}}(-m)$ has sphere bundles diffeomorphic to $\Sph^{2m-1}/\Z_m$. Thus, choosing a radial coordinate $r$ along the fibers, we obtain a diffeomorphism
\begin{align} \label{eq:bundle-diffeo}
    \O_{\CP^{m-1}}(-m) \setminus \CP^{m-1} \cong (0,\infty) \times \Sph^{2m-1}/\Z_m
\end{align}
where the excluded $\CP^{m-1}$ is the zero section. On $(0,\infty) \times \Sph^{2m-1}/\Z_m$, one considers metrics of the form
\begin{align} \label{eq:g-calabi}
    g = u(r) \left( dr^2 + r^2 \sigma \otimes \sigma \right) + v(r) \pi^*g_{\mathrm{FS}} = u(r) dr^2 + g_{r\sqrt{u(r)}, \sqrt{v(r)}},
\end{align}
where $u$ and $v$ are smooth positive functions. Equivalently, with the canonical complex structure under which $J(\del_r) = \frac{1}{r}\xi$, one considers K\"ahler forms on $(0,\infty) \times \Sph^{2m-1}/\Z_m$ of the form
\begin{align} \label{eq:calabi-kahler}
    \omega = u(r)r \, dr \wedge \sigma + v(r) \pi^*\omega_{\mathrm{FS}},
\end{align}
where $\omega_{\mathrm{FS}}$ is the Fubini--Study K\"ahler form on $\CP^{m-1}$. The K\"ahler condition $d\omega = 0$ and the Ricci-flatness condition yield, respectively,
\begin{align} \label{eq:ODE-conditions}
    v' = 2ru, \quad uv^{m-1} = 2.
\end{align}
Solving the system \eqref{eq:ODE-conditions}, we get the general solution
\begin{align} \label{eq:uv-solution}
    u(r) = 2(2m(r^2+\epsilon))^{-\frac{m-1}{m}}, \quad v(r) = (2m(r^2+\epsilon))^{\frac{1}{m}}
\end{align}
where $\epsilon \in \R$. Let $g_{\mathrm{Cal},\epsilon}$ be the metric \eqref{eq:g-calabi} with $u$ and $v$ given by \eqref{eq:uv-solution}. If $\epsilon > 0$, then $g_{\mathrm{Cal},\epsilon}$ closes up smoothly at $r=0$ to give a complete metric on $\O_{\CP^{m-1}}(-m)$, also denoted by $g_{\mathrm{Cal},\epsilon}$. We call $(\O_{\CP^{m-1}}(-m), g_{\mathrm{Cal},\varepsilon})$ the \emph{$\varepsilon$-Calabi space}. Changing $\varepsilon > 0$ only amounts to scaling the metric and applying a diffeomorphism, so we leave $\varepsilon$ unspecified.

It follows from the above discussion about isometry groups of Berger metrics that $\I^0(\O_{\CP^{m-1}}(-m), g_{\mathrm{Cal},\varepsilon}) \cong U(m)/\Z_m$. Thus, $U(m)$ acts isometrically and almost effectively on Calabi space with kernel $\Z_m$. The action preserves and is transitive on each level set of $r$, so it has cohomogeneity one. Moreover, as one checks using \eqref{eq:g-calabi} and \eqref{eq:uv-solution}, Calabi space is Ricci-flat and K\"ahler ALE with group at infinity $\Z_m$. When $m=2$, Calabi space coincides with Eguchi--Hanson space and is therefore hyperk\"ahler. On the other hand, the Calabi spaces with $m \geq 3$ are not hyperk\"ahler.

Observe that if $h \in \ker_{L^2}(\Delta_L)$, then
\begin{align}
    \Delta \tr_g h &= \tr_g \Delta h = -2 \tr_g \Rm(h) = -2 g^{ik} \Rm_{ijkl} h^{jl} = 2 \Ric_{jl} h^{jl} = 0.
\end{align}
Thus, $\tr_g h \in \ker_{L^2}(\Delta) = \{0\}$. It then follows by \cite{morteza-viaclovsky}*{Theorem 2.2} that $\ker_{L^2}(\Delta_L)$ is one-dimensional on each Calabi space with $m \geq 3$. The generator of $\ker_{L^2}(\Delta_L)$ arises from scaling, so it does not produce a genuinely new Einstein deformation.

\section{Global symmetry principles} \label{sec:symmetry-principles}

\subsection{Geometry of ALE spaces}

First, we have the following result about decay rates of Ricci-flat ALE spaces, due to Kr\"oncke and Sz\'abo (see also \cites{bkn89,cheeger-tian}). The original proof is given for Ricci-flat ALE manifolds, but it applies to Ricci-flat ALE spaces after standard orbifold adaptations of the weighted elliptic theory.

\begin{theorem}[\cite{kroncke-szabo}] \label{thm:ALE-decay-rate}
    Let $(M^n,g)$ be an $n$-dimensional Ricci-flat ALE space with group at infinity $\Gamma \neq \{1\}$. Then $(M,g)$ is of order $n$.
\end{theorem}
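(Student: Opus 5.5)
The plan is to follow the standard Bando--Kasue--Nakajima / Cheeger--Tian route, adapted to orbifolds, and then sharpen the decay using the nontriviality of $\Gamma$. Lifting the end to the universal cover $\R^n\setminus B_R(0)$, we obtain a $\Gamma$-invariant metric $\tilde g$ defined on the complement of a ball and asymptotic to $\bar g$ at some rate $\tau>0$. The goal is to show that in a suitable gauge $\tilde g-\bar g=\O(r^{-n})$ with matching derivative bounds; this descends to the quotient and gives order $n$.

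The first step is to put $\tilde g$ in harmonic coordinates at infinity. We solve $\Delta_{\tilde g} x^i = 0$ with $x^i - \bar x^i = \O(r^{1-\tau+\delta})$, using weighted elliptic theory on $\R^n\setminus B_R$ away from the indicial roots. The solutions can be chosen $\Gamma$-equivariant by averaging over $\Gamma$, since $\Gamma\subset SO(n)$ acts linearly. In these coordinates Ricci-flatness becomes a quasilinear elliptic system
\begin{align}
\bar\Delta h_{ij} = Q(h,\partial h,\partial^2 h), \qquad h=\tilde g-\bar g,
\end{align}
where $Q$ is quadratic. We then bootstrap. If $h=\O(r^{-\tau})$, then $Q=\O(r^{-2\tau-2})$, so $h$ equals a harmonic function on the exterior up to an error of order $r^{-2\tau}$. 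Expanding that harmonic function in spherical harmonics, the decaying part is
\begin{align}
\sum_{k\ge 0} r^{2-n-k}\phi_k,
\end{align}
together with possible growing or constant terms, which are excluded by $h\to 0$. Iterating, the decay improves until it reaches the threshold set by the first homogeneous harmonic mode.

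The key step is to rule out the modes $r^{-\beta}$ with $0<\beta<n$. Two points need care.

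First, the borderline mode $r^{2-n}$ with constant coefficient tensor, which is the ``mass'' term, must be excluded. Here $\Gamma\neq\{1\}$ is used. The relevant coefficients are $\Gamma$-invariant elements of $\ker$ of the linearized operator restricted to degree-zero spherical harmonics, that is, constant symmetric tensors modulo gauge. The harmonic-coordinate condition together with linearized Ricci-flatness forces the $r^{2-n}$ coefficient to come from a Schwarzschild-type term, whose coefficient is determined by a vector-valued flux integral. For $\Gamma$ acting freely on $\Sph^{n-1}$, the only $\Gamma$-invariant vector is $0$, since a nontrivial fixed vector would give fixed points on the sphere. The mass-type coefficient therefore vanishes.

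Second, the $r^{1-n}$ coefficients pair with linear functions, which are again not $\Gamma$-invariant, or they are pure gauge. The intermediate rate $r^{-(n-1)}$ is thus removed by a further $\Gamma$-equivariant change of coordinates, for example by subtracting $\nabla$ of a homogeneous harmonic function, in the manner of Cheeger--Tian.

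I expect this mode analysis to be the main obstacle. It requires identifying precisely which homogeneous solutions of the linearized Einstein equation in harmonic gauge, with decay between $r^0$ and $r^{-n}$, survive $\Gamma$-invariance modulo gauge. Once these modes are excluded, the bootstrap yields $h=\O(r^{-n})$. Higher-derivative bounds then follow from interior Schauder estimates on annuli $A_{r,2r}$ rescaled to unit size, giving $|\nabla^k h|=\O(r^{-n-k})$. Passing back to the quotient proves $(M,g)$ has order $n$. The finitely many orbifold points lie in the compact set $K$ and play no role.
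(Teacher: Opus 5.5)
The paper does not prove this statement itself; it quotes Kr\"oncke--Szab\'o (building on Bando--Kasue--Nakajima and Cheeger--Tian) and only remarks that the weighted elliptic theory adapts to orbifolds. Your overall architecture is the standard one behind those results: $\Gamma$-equivariant harmonic coordinates on the lifted end, a bootstrap of $\bar\Delta h = Q$, then a mode analysis at the rates $r^{2-n}$ and $r^{1-n}$. You rightly call the mode analysis the heart of the matter, but it is argued incorrectly: you have swapped the roles of the gauge condition and of $\Gamma$.

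At rate $r^{2-n}$ the leading harmonic term is $c_{ij}r^{2-n}$ with $c$ a constant symmetric matrix. There is no Schwarzschild-type Ricci-flat mode here and no vector-valued flux. Since $\Gamma$-invariant symmetric matrices always include multiples of $\delta_{ij}$, freeness of the action cannot kill this term. What kills it is the harmonic-coordinate condition $\partial_i h_{ij} - \tfrac12 \partial_j \tr h = \O(|h|\,|\partial h|)$. At order $r^{1-n}$ it reads $(2-n)r^{-n}\bigl(c\,x - \tfrac12(\tr c)\,x\bigr)=0$. Hence $c = \tfrac12(\tr c)\,\delta$, so $c = 0$ for $n \geq 3$, with no input from $\Gamma$.

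$\Gamma$ is needed at the $r^{1-n}$ mode $h_{ij} = a_{ijk}x_k r^{-n}$, and there your justification fails as written. The coefficients are tensor-valued, and for odd $|\Gamma|$ there are nonzero $\Gamma$-invariant $a \in S^2\R^n \otimes \R^n$; an example is $\Z_3 \subset U(3)$, the case $m=3$ of Theorem~\ref{thm:calabi-characterization}. So ``linear functions are not $\Gamma$-invariant'' proves nothing. Only when $|\Gamma|$ is even does $-\mathrm{Id} \in \Gamma$ kill odd modes outright. Nor can the mode be removed by a $\Gamma$-equivariant coordinate change. For $n \geq 4$ there are no harmonic functions on $\R^n \setminus \{0\}$ homogeneous of degree $3-n$. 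The only gauge change available at this order is $x \mapsto x + w\,r^{2-n}$, and it is $\Gamma$-equivariant only when $w$ is $\Gamma$-fixed.

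The missing step is a classification of these modes. Decompose $S^2\R^n\otimes\R^n \cong S^3_0 \oplus S_{(2,1),0} \oplus \R^n \oplus \R^n$ under $O(n)$. Then check that the linearized gauge map has kernel exactly one copy of $\R^n$. This shows that the degree-$(1-n)$ harmonic tensors satisfying the linearized gauge condition are exactly $\L_{w r^{2-n}}\bar g$, i.e.\ $h_{ij} = (2-n)(w_i x_j + w_j x_i) r^{-n}$ with $w \in \R^n$. Since $w \mapsto h$ is injective and $O(n)$-equivariant, $\Gamma$-invariance of $h$ forces $w \in (\R^n)^\Gamma = \{0\}$. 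This is where freeness of the action on $\Sph^{n-1}$ is used.

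With these two corrections, and one extra iteration to absorb a possible $r^{-4}\log r$ remainder when $n=4$, your bootstrap does give $h = \O(r^{-n})$ together with the derivative bounds.
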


We will also use that every ALE space with nontrivial group at infinity admits an end foliation by constant mean curvature (CMC) hypersurfaces. This follows from work of Chodosh, Eichmair, and Volkmann.

\begin{theorem}[\cite{cev}] \label{thm:cmc-foliation}
    Let $(M^n,g)$ be an ALE space with group at infinity $\Gamma \neq \{1\}$, and let $\mathcal{S} = \Sph^{n-1}/\Gamma$. Then there exists $R_0 \gg 1$ and a smooth family of hypersurfaces $\{\Sigma_\rho\}_{\rho \geq R_0}$ foliating the end of $M$, such that:
    \begin{enumerate}[label=(\alph*)]
        \item $\Sigma_\rho$ is diffeomorphic to $\mathcal{S}$ and has constant mean curvature $\frac{n-1}{\rho}$.
        \item In any ALE chart $\Phi: M \setminus K \to (\R^n \setminus B_R(0))/\Gamma \cong (R,\infty) \times \mathcal{S}$, we have for each $\rho \geq R_0$
        \begin{align} \label{eq:Sigma-identification}
            \Phi(\Sigma_\rho) = \{ (\rho(1+u_\rho(\theta)), \theta) \mid \theta \in \mathcal{S} \}
        \end{align}
        where $u_\rho \in C^{2,\alpha}(\mathcal{S})$ satisfies $\norm{u}_{C^{2,\alpha}(\mathcal{S})} \to 0$ as $\rho \to \infty$. In fact, by elliptic bootstrapping, $\norm{u_\rho}_{C^k(\mathcal{S})} \to 0$ for every $k \geq 0$.
        \item Let $H$ be the function defined on the end of $M$ by sending a point $x$ to the mean curvature of the CMC leaf containing $x$. Then $H$ is smooth and $\nabla H$ is nonvanishing.
    \end{enumerate}
\end{theorem}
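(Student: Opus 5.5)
The plan is to work on the universal cover of the end and construct the leaves as small normal graphs over coordinate spheres by an implicit function argument, carried out in the class of $\Gamma$-invariant functions. Fix an ALE chart $\Phi$ and lift $g$ to a $\Gamma$-invariant metric $\tilde g$ on $\R^n \setminus B_R(0)$. By \eqref{eq:ALE-diffeo}, $\tilde g$ satisfies $|\bar\nabla^k(\tilde g - \bar g)| = \O(r^{-\tau-k})$.

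\textbf{Step 1 (rescaling).} For large $\rho$, rescale by $x \mapsto x/\rho$ and write $\tilde g_\rho = \rho^{-2}\tilde g(\rho\,\cdot)$. On the annulus $\{1/2 \le |x| \le 2\}$ this metric is $\O(\rho^{-\tau})$-close to Euclidean in every $C^k$. For $u \in C^{2,\alpha}(\Sph^{n-1})$ small, let $S_u = \{(1+u(\theta))\theta\}$, and define
\begin{align}
    F(\rho,u) = H_{\tilde g_\rho}(S_u) - (n-1).
\end{align}
We have $F(\infty,0)=0$, and the linearization at the round sphere in flat space is the Jacobi operator $L = -\Delta_{\Sph^{n-1}} - (n-1)$. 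Its kernel consists of the restrictions of linear functions, and these correspond to translations.

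\textbf{Step 2 (removing the kernel by $\Gamma$-invariance).} This is the key step. Since $\tilde g_\rho$ is $\Gamma$-invariant, $F$ maps $\Gamma$-invariant functions to $\Gamma$-invariant functions. We therefore restrict to the closed subspaces $C^{2,\alpha}(\Sph^{n-1})^\Gamma \to C^{0,\alpha}(\Sph^{n-1})^\Gamma$, which are just $C^{2,\alpha}(\mathcal S)$ and $C^{0,\alpha}(\mathcal S)$.

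Because $\Gamma \ne \{1\}$ acts freely on $\Sph^{n-1}$, no nonzero vector $v \in \R^n$ is fixed by all of $\Gamma$. Indeed, a fixed $v$ would give a fixed point $v/|v|$ on the sphere for every nontrivial element. Hence no nonzero linear function is $\Gamma$-invariant. So $L$ restricted to the invariant subspace is a self-adjoint elliptic operator with trivial kernel, and it is invertible.

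The implicit function theorem, with $s = \rho^{-\tau}$ as a continuous parameter (smooth in $\rho$), then produces a unique small invariant $u_\rho$ with $F(\rho,u_\rho) = 0$ and $\norm{u_\rho}_{C^{2,\alpha}} \lesssim \rho^{-\tau} \to 0$. Scaling back gives $\Sigma_\rho$ with mean curvature $(n-1)/\rho$ and the graph description \eqref{eq:Sigma-identification}. Higher $C^k$ decay follows by bootstrapping the quasilinear elliptic equation, and $\Sigma_\rho \cong \mathcal S$ because it is a $\Gamma$-invariant graph. This gives (a) and (b).

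\textbf{Step 3 (foliation and part (c)).} Differentiate the family in $\rho$. The normal speed $\varphi = \langle \partial_\rho \Sigma_\rho, \nu\rangle$ satisfies
\begin{align}
    L_{\Sigma_\rho}\varphi = -\partial_\rho H = (n-1)/\rho^2 .
\end{align}
Since $L_{\Sigma_\rho}$ is, after rescaling, a small perturbation of the invertible invariant operator $L$, and $L^{-1}$ applied to a constant gives a constant, $\varphi$ is $C^0$-close to $\rho\cdot$(rescaled constant) $= 1 + o(1) > 0$. So the leaves are pairwise disjoint and sweep out the region outside a compact set, i.e. they foliate the end. Then $H$ is smooth by the inverse function theorem applied to $(\rho,\theta) \mapsto \Sigma_\rho(\theta)$, and $\nabla H = -\frac{n-1}{\rho^2}\varphi^{-1}\nu \ne 0$.

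The main obstacle is the invertibility in Step 2. In the asymptotically Euclidean case it fails because of translations, which is why Chodosh--Eichmair--Volkmann need extra hypotheses there. Here it is replaced by the freeness of the $\Gamma$-action. A secondary point is checking that all constants in the implicit function theorem are uniform in $\rho$, which follows from the uniform rescaled closeness established in Step 1.
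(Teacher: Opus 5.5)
Your argument is correct in substance, and it is a genuinely different route from the paper's, because the paper gives no argument here at all. It quotes Chodosh--Eichmair--Volkmann, who work with general asymptotically conical ends (cones with nonnegative Ricci curvature). Their results also tie the leaves to large isoperimetric regions, which is what makes the foliation canonical.

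You instead give the self-contained perturbative construction. You rescale, then solve $H=n-1$ for radial graphs by the implicit function theorem on $C^{2,\alpha}(\mathcal{S})$. Invertibility comes from the absence of nonzero $\Gamma$-invariant linear functions, i.e.\ from $\lambda_1(-\Delta_{\bar g_\infty})>n-1$ on $\Sph^{n-1}/\Gamma$. This is the same spectral gap the paper later uses in Lemma~\ref{lem:convergence-eigenfn} and Corollary~\ref{cor:KF-tangential}. Your route yields explicit rates ($\norm{u_\rho}_{C^k}=\O(\rho^{-\tau})$, $\varphi=1+o(1)$, hence $|\nabla H|=(n-1)\rho^{-2}(1+o(1))$) with no isoperimetric machinery. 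What it does not give by itself is chart-independence, since your uniqueness is only among small graphs in the one chart you fixed.

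Two things need patching.

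\emph{Chart-independence.} Part (b) is asserted in \emph{every} ALE chart, so you need the following step. For two ALE charts, the lifted transition map $\psi$ satisfies $\psi^*\bar g=\bar g+k$ with $|\bar\nabla^j k|=\O(r^{-\tau-j})$. The isometry equation expresses $\partial^2\psi$ through the Christoffel symbols of $\bar g+k$, which gives $\partial^2\psi=\O(r^{-\tau-1})$ and $\partial^3\psi=\O(r^{-\tau-2})$. Hence $\rho^{-1}\psi(\rho\,\cdot)$ is $C^3$-close on the annulus to $y\mapsto O_\rho y+b_\rho$ with $O_\rho\in O(n)$. Moreover $b_\rho\to 0$, because both radial coordinates equal $d(p,\cdot)(1+o(1))$. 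So your leaves are small $C^{2,\alpha}$ radial graphs over the radius-$\rho$ sphere in the second chart, with the same mean curvature. Local uniqueness from the implicit function theorem in that chart then shows the two foliations coincide.

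\emph{Sign in Step 3.} The sign in Step 3 conflicts with Step 1. With $L=-\Delta-|A|^2-\Ric(\nu,\nu)$, which is what gives $-\Delta-(n-1)$ in Step 1, differentiating $H(\Sigma_\rho)=(n-1)/\rho$ gives $L_{\Sigma_\rho}\varphi=\partial_\rho H=-(n-1)\rho^{-2}$. You can check this on dilated round spheres, where $\varphi\equiv 1$. Your version, $L_{\Sigma_\rho}\varphi=+(n-1)\rho^{-2}$, would give $\varphi\to -1$. Since $\varphi$ is scale-invariant and $\rho^2 L_{\Sigma_\rho}\to -\Delta_{\bar g_\infty}-(n-1)$ uniformly invertibly on $C^{2,\alpha}(\mathcal{S})$, the corrected equation gives $\varphi\to 1$ directly. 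No factor of $\rho$ is involved.
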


In the rest of this subsection, we establish the following result which exhibits a relation between Killing fields and the CMC leaves.

\begin{corollary} \label{cor:KF-tangential}
    Let $(M,g)$ be a Ricci-flat ALE space with group at infinity $\Gamma \neq \{1\}$. If $W$ is a Killing field on $(M,g)$, then $W$ is tangential to all sufficiently large leaves of the CMC foliation from Theorem \ref{thm:cmc-foliation}, i.e. $\inner{W}{\nabla H} = 0$. Moreover, $[W,\nabla H] = 0$ on such leaves.
\end{corollary}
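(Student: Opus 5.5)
The strategy is to show that the one-parameter group of isometries $\phi_t$ generated by $W$ must map each sufficiently large CMC leaf to itself. The key is the uniqueness part of the Chodosh--Eichmair--Volkmann result: far out in the end, the leaves $\Sigma_\rho$ are the unique CMC hypersurfaces of their type, namely hypersurfaces of the form \eqref{eq:Sigma-identification} with small $u$, lying far out and enclosing the core. I would first derive this uniqueness statement from \cite{cev}.

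\textbf{Step 1: isometries move points a bounded distance.} Since $\phi_t$ is an isometry, it sends the leaf $\Sigma_\rho$ to a CMC hypersurface $\phi_t(\Sigma_\rho)$ with the same mean curvature $\frac{n-1}{\rho}$. We need $\phi_t(\Sigma_\rho)$ to lie in the uniqueness class of \cite{cev}. For this, fix a point $o$ in the compact set $K$. For $t$ in a compact interval, $\phi_t$ moves $o$ a bounded distance $d(o,\phi_t(o)) \le C$. Hence $|r\circ\phi_t - r| \le C$, using the triangle inequality. So $\phi_t(\Sigma_\rho)$ lies in the annulus $\{|r-\rho|\le C+o(\rho)\}$.

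\textbf{Step 2: the image leaf is a small graph.} Isometries preserve the second fundamental form. Each leaf has $|A-\frac{1}{\rho}g|\to 0$ relative to $1/\rho$, so the image has the same almost-umbilic bounds. Also, $\phi_t(\Sigma_\rho)$ separates the end from $K$ for large $\rho$, since this is a topological property preserved by the diffeomorphism $\phi_t$ isotopic to the identity. In the ALE chart, an almost-umbilic closed hypersurface with mean curvature $\approx (n-1)/\rho$, lying in that annulus and enclosing the origin, is a graph $\{(\rho(1+\tilde u(\theta)),\theta)\}$ with $\tilde u$ small in $C^{2,\alpha}$.

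\textbf{Step 3: conclude invariance.} The uniqueness theorem of \cite{cev} then gives $\phi_t(\Sigma_\rho)=\Sigma_\rho$. Handling this uniqueness is the main obstacle. If \cite{cev} states uniqueness only among hypersurfaces enclosing a large ball (rather than as small graphs), Step 2 is tailored to verify exactly that hypothesis. The nontriviality of $\Gamma$ is what the uniqueness in \cite{cev} requires; it rules out translations of round spheres, which would otherwise break uniqueness in the AE case.

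\textbf{Step 4: the two claims.} Since $\phi_t$ maps each leaf to itself, $H\circ\phi_t=H$ for $\rho\ge R_1$ and $|t|$ small. Differentiating in $t$ gives $W(H)=\inner{W}{\nabla H}=0$. Moreover, $\phi_t$ is an isometry preserving $H$, so it preserves $\nabla H$: $(\phi_t)_*\nabla H=\nabla(H\circ\phi_t^{-1})=\nabla H$. Differentiating at $t=0$ gives $\L_W\nabla H=[W,\nabla H]=0$ on this region.

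Note that Ricci-flatness and Theorem \ref{thm:ALE-decay-rate} are not essential here. They only improve the decay, making the graph estimates in Step 2 comfortable.
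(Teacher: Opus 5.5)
Your proof is correct, but it takes a genuinely different route from the paper. The paper argues infinitesimally. The flow of $W$ consists of isometries, so the moved leaves keep mean curvature $\frac{n-1}{\rho}$. The first variation formula (with $\Ric=0$) then makes the normal component a Jacobi field, $\Delta_{g_\rho}\inner{W}{\nu}+|A_\rho|^2\inner{W}{\nu}=0$. After multiplying by $\rho^2$, this operator converges to $\Delta_{\bar g_\infty}+(n-1)$ on $\Sph^{n-1}/\Gamma$, which has trivial kernel since $\lambda_1>n-1$ when $\Gamma\neq\{1\}$. The compactness Lemma \ref{lem:convergence-eigenfn} then gives $\inner{W}{\nu}\equiv 0$ for large $\rho$, and $[W,\nabla H]=0$ follows from $\L_W dH=d(WH)=0$.

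You instead work with the integrated flow $\phi_t$: you show $\phi_t(\Sigma_\rho)$ is again a $C^{2,\alpha}$-small graph with the same constant mean curvature, and then invoke local uniqueness of the leaves. That uniqueness is not part of Theorem \ref{thm:cmc-foliation} as stated. You have to extract it from the implicit-function-theorem construction in \cite{cev}, and there it rests on exactly the same invertibility of $\Delta_{\bar g_\infty}+(n-1)$. So the analytic core of the two proofs is the same. Your route additionally needs the geometric Steps 1--2 and a check that the outward orientation, hence the sign of $H$, is preserved. The paper's linearized route needs none of this global control of the flow and uses Theorem \ref{thm:cmc-foliation} only as stated.

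What your route buys is the direct statement that each far leaf is invariant under $\phi_t$. It also makes transparent that Ricci-flatness is irrelevant here; in the paper it is inessential too, since $\rho^2\Ric(\nu,\nu)\to 0$ anyway.

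For Step 2 there is a cleaner path than recognizing almost-umbilic hypersurfaces in a thin annulus. The rescaled maps $\rho^{-1}\phi_t(\rho\,\cdot)$, for $|t|\le 1$, are isometries between metrics converging smoothly to the flat cone, so they have uniform $C^k$ bounds. Since $|r\circ\phi_t-r|\le C$, they subconverge to isometries of an annular region of the cone that preserve $|x|$. Such limits are radial extensions of isometries of $(\Sph^{n-1}/\Gamma,\bar g_\infty)$, so they preserve the unit sphere. Hence the rescaled $\phi_t(\Sigma_\rho)$ converge smoothly to the unit sphere, which is exactly the small-graph property you need.
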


\begin{lemma} \label{lem:g-A-convergence}
    Let $(M,g)$ be an ALE space with group at infinity $\Gamma \neq \{1\}$.
    Let $g_\rho$ be the induced metric on the CMC leaf $\Sigma_\rho$ with mean curvature $\frac{n-1}{\rho}$, and $A_\rho$ the second fundamental form of $\Sigma_\rho$. Using \eqref{eq:Sigma-identification}, we identify $\Sigma_\rho$ with $\mathcal{S}$, and consider $g_\rho$ and $A_\rho$ as symmetric 2-tensors on $\mathcal{S}$. Then as $\rho \to \infty$, we have
    \begin{align}
        \rho^{-2} g_\rho \to \bar{g}_\infty \quad \text{and} \quad \rho^2 |A_\rho|_{g_\rho}^2 \to n-1 \quad \text{smoothly on } \mathcal{S},
    \end{align}
    where $\bar{g}_\infty$ is a metric on $\mathcal{S}$ with constant sectional curvature 1.
\end{lemma}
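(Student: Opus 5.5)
We will work entirely in an ALE chart $\Phi$ and compare the leaves $\Sigma_\rho$ with the round spheres $\{\rho\}\times\mathcal{S}$ in the flat cone $(\R^n\setminus B_R(0))/\Gamma$. The plan is to rescale by $\rho$ so that everything reduces to the statement that a $C^k$-small graph over the unit sphere, in a metric $C^k$-close to Euclidean, has induced metric and second fundamental form $C^k$-close to those of the round unit sphere.

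\textbf{Step 1: rescaling.} Let $\delta_\rho(x)=x/\rho$ be the dilation on the cone, and consider the rescaled metrics $\hat g_\rho=\rho^{-2}(\delta_\rho^{-1})^*(\Phi^{-1})^*g$ on the annulus $A=\{1/2\le |x|\le 2\}/\Gamma$.
\begin{itemize}
\item By \eqref{eq:ALE-diffeo}, with $|\nabla^k|$ scaling like $\rho^{-k}$, we have $\|\hat g_\rho-\bar g\|_{C^k(A,\bar g)}=\O(\rho^{-\tau})\to0$ for every $k$. Here one uses that $\bar g$- and $g$-norms are uniformly comparable at infinity and that $r$ is comparable to $|x|$.
\item By \eqref{eq:Sigma-identification}, the image of $\Sigma_\rho$ under $\delta_\rho\circ\Phi$ is the radial graph $\{(1+u_\rho(\theta),\theta)\}$.
\item By Theorem \ref{thm:cmc-foliation}(b), $\|u_\rho\|_{C^k(\mathcal{S})}\to0$ for every $k$.
\end{itemize}

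\textbf{Step 2: induced metric.} Under the identification $\theta\mapsto(1+u_\rho(\theta),\theta)$, the induced metric of the graph in $\hat g_\rho$ is exactly $\rho^{-2}g_\rho$. Indeed, the scaling $\rho^{-2}$ in the definition of $\hat g_\rho$ cancels the dilation by $\rho$.
\begin{itemize}
\item In the Euclidean metric, the graph's induced metric is $(1+u)^2\bar g_{\mathcal{S}}+du\otimes du$.
\item This tends to the round metric $\bar g_{\mathcal S}$ in every $C^k$ as $u\to0$.
\item Pulling back $\hat g_\rho$ instead of $\bar g$ adds an error bounded by $\|\hat g_\rho-\bar g\|_{C^k}$ times a constant depending on $\|u\|_{C^{k+1}}$.
\end{itemize}
Hence $\rho^{-2}g_\rho\to\bar g_\infty:=\bar g_{\mathcal{S}}$ smoothly. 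This is the quotient of the unit round sphere by $\Gamma$, which has constant sectional curvature $1$.

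\textbf{Step 3: second fundamental form.} The rescaled second fundamental form is $\hat A=\rho^{-1}A_\rho$ as a tensor, while $|\cdot|_{\hat g}=\rho^{2}|\cdot|_{g}$ on 2-tensors. Therefore $|\hat A|^2_{\hat g}=\rho^{-2}\cdot\rho^{4}\rho^{-4}|A_\rho|^2_{g_\rho}\cdot\rho^{2}$, that is, $|\hat A|_{\hat g}^2=\rho^2|A_\rho|_{g_\rho}^2$. One can confirm this directly: principal curvatures scale like $\rho^{-1}$, so $\rho^2|A|^2$ is scale invariant.
\begin{itemize}
\item $\hat A$ is computed from the unit normal and Christoffel symbols of $\hat g_\rho$, which involve one derivative of $\hat g_\rho$, together with up to two derivatives of $u_\rho$.
\item Both inputs converge in every $C^k$ to the Euclidean data for the unit sphere, for which $A=\bar g_{\mathcal S}$ and $|A|^2=n-1$.
\end{itemize}
Thus $\rho^2|A_\rho|^2_{g_\rho}\to n-1$ smoothly.

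\textbf{Main obstacle.} The only delicate point is bookkeeping: the identification of $\Sigma_\rho$ with $\mathcal{S}$ must be the graph map from \eqref{eq:Sigma-identification}, and we need uniform $C^k$ control of $\hat g_\rho$ on a fixed annulus. The latter comes from the $\nabla^k$ bounds in \eqref{eq:ALE-diffeo} after converting $g$-covariant derivatives into coordinate derivatives. This conversion uses induction on $k$, expressing the Christoffel symbols of $g$ as $\O(r^{-\tau-1})$ in the chart.

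Note that the Ricci-flat hypothesis is not needed here. Only ALE decay with some $\tau>0$ and the CMC foliation estimates enter.
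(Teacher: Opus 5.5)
Your proposal is correct and follows essentially the same route as the paper. You rescale by $\rho^{-2}$, use the graph description \eqref{eq:Sigma-identification} with $\norm{u_\rho}_{C^k}\to 0$ to get smooth convergence to the unit slice $\{r=1\}$ of the flat cone, and use the scale invariance $\rho^2|A_\rho|^2_{g_\rho} = |\rho^{-1}A_\rho|^2_{\rho^{-2}g_\rho}$, spelling out the chart-level bookkeeping the paper leaves implicit. One cosmetic slip: the displayed product $\rho^{-2}\cdot\rho^{4}\rho^{-4}\cdot\rho^{2}$ in Step 3 equals $1$; the correct count is $|\rho^{-1}A_\rho|^2_{\rho^{-2}g_\rho} = \rho^{4}\cdot\rho^{-2}|A_\rho|^2_{g_\rho}$, which does give the identity you state.
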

\begin{proof}
    By Theorem \ref{thm:cmc-foliation}, after identifying $\Sigma_\rho$ with $\mathcal{S}$ as described and rescaling the metric on $\Sigma_\rho$ by $\rho^{-2}$, the resulting metric converges smoothly to that of the unit slice $\{r=1\}$ in the cone $((0,\infty) \times \mathcal{S}, dr^2 + r^2 \bar{g}_\infty)$. Hence $\rho^{-2} g_\rho \to \bar{g}_\infty$ smoothly. Likewise, the rescaled second fundamental form converges smoothly to the second fundamental form of $\{r=1\}$, which has squared norm $n-1$. Therefore $\rho^2 |A_\rho|_{g_\rho}^2 = |\rho^{-1} A_\rho|_{\rho^{-2} g_\rho}^2 \to n-1$ smoothly.
\end{proof}

Recall that if $\Gamma$ is nontrivial, then the first nonzero eigenvalue of $-\Delta_{\bar{g}_\infty}$ on functions $\Sph^{n-1}/\Gamma \to \R$ is strictly larger than $n-1$. Using this fact and a standard compactness--contradiction argument, we obtain the next lemma.

\begin{lemma} \label{lem:convergence-eigenfn}
    Let $\Gamma \neq \{1\}$ and $\mathcal{S} = \Sph^{n-1}/\Gamma$. There exists $\epsilon > 0$ such that if $g$ is a metric on $\mathcal{S}$ and $\phi$ is a smooth function on $\mathcal{S}$ such that
    \begin{align}
        \norm{g - \bar{g}_\infty}_{C^{2,\alpha}(\bar{g}_\infty)} < \epsilon, \quad \norm{\phi - (n-1)}_{C^{0,\alpha}(\bar{g}_\infty)} < \epsilon,
    \end{align}
    then $\ker(\Delta_g + \phi) = 0$.
\end{lemma}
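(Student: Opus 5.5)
The plan is a compactness--contradiction argument. Suppose no such $\epsilon$ exists. Then there are sequences of metrics $g_k$ on $\mathcal{S}$ and smooth functions $\phi_k$ with
\begin{align}
    \norm{g_k - \bar{g}_\infty}_{C^{2,\alpha}(\bar{g}_\infty)} < \tfrac{1}{k}, \quad \norm{\phi_k - (n-1)}_{C^{0,\alpha}(\bar{g}_\infty)} < \tfrac{1}{k},
\end{align}
together with nonzero smooth $\psi_k$ satisfying $\Delta_{g_k}\psi_k + \phi_k \psi_k = 0$.

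We normalize $\norm{\psi_k}_{L^2(\bar{g}_\infty)} = 1$. In local coordinates, $\Delta_{g_k}$ is a second-order elliptic operator whose coefficients involve $g_k^{ij}$ and first derivatives of $g_k$. These coefficients are uniformly bounded in $C^{1,\alpha}$, and in particular in $C^{0,\alpha}$, and are uniformly elliptic. The zeroth-order coefficient $\phi_k$ is uniformly bounded in $C^{0,\alpha}$.

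Interior Schauder estimates on finitely many charts covering the compact manifold $\mathcal{S}$ then give
\begin{align}
    \norm{\psi_k}_{C^{2,\alpha}} \leq C \norm{\psi_k}_{C^0}.
\end{align}
To reach an $L^2$ bound, first apply $L^p$ estimates or De Giorgi--Nash--Moser, which give $\norm{\psi_k}_{C^0} \leq C\norm{\psi_k}_{L^2}$ with $C$ uniform in $k$. This yields $\norm{\psi_k}_{C^{2,\alpha}} \leq C$. By Arzel\`a--Ascoli, a subsequence converges in $C^2$ to some $\psi$ with $\norm{\psi}_{L^2} = 1$.

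Passing to the limit in the equation, using $g_k \to \bar{g}_\infty$ in $C^2$ and $\phi_k \to n-1$ uniformly, gives
\begin{align}
    \Delta_{\bar{g}_\infty}\psi + (n-1)\psi = 0.
\end{align}
So $\psi$ is a nonzero eigenfunction of $-\Delta_{\bar{g}_\infty}$ with eigenvalue $n-1$. The value $n-1$ is not $0$ for $n \geq 2$, so $\psi$ would have to be an eigenfunction for a nonzero eigenvalue equal to $n-1$.

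The spectrum of the round $\Sph^{n-1}$ consists of $\ell(\ell+n-2)$, and the eigenvalue $n-1$ occurs only for $\ell = 1$, with eigenfunctions the restrictions of linear functions $x \mapsto \inner{a}{x}$. Functions on $\Sph^{n-1}/\Gamma$ are exactly the $\Gamma$-invariant functions on $\Sph^{n-1}$. So $\psi$ lifts to a $\Gamma$-invariant linear function, meaning $\gamma^T a = a$ for every $\gamma \in \Gamma$. Since $\Gamma \neq \{1\}$ acts freely on $\Sph^{n-1}$, no nontrivial element fixes a nonzero vector, which forces $a = 0$. This is exactly the stated fact that the first nonzero eigenvalue on $\Sph^{n-1}/\Gamma$ exceeds $n-1$. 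It contradicts $\psi \neq 0$, and the lemma follows.

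The only mildly delicate step is obtaining uniform elliptic estimates from mere $C^{2,\alpha}$ closeness of the metrics; this gives $C^{1,\alpha}$ coefficients, which is more than the $C^{0,\alpha}$ needed for Schauder theory, so no genuine obstacle arises.
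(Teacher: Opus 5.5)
Your proof is correct and follows the same compactness--contradiction argument as the paper: you normalize kernel elements in $L^2(\bar{g}_\infty)$, extract a convergent subsequence via uniform elliptic estimates, and contradict the spectral gap of $\Sph^{n-1}/\Gamma$ at the value $n-1$. The differences are only technical. You use Schauder estimates plus a uniform sup bound, giving $C^2$ convergence and a classical limit equation, where the paper uses a uniform $H^2$ bound, $H^1$ convergence and a weak limit equation; and you verify the spectral gap yourself via $\ell=1$ spherical harmonics and freeness of the $\Gamma$-action, which the paper simply recalls.
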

\begin{proof}
    If the lemma were false, then there exist sequences $g_k \to \bar{g}_\infty$ in $C^{2,\alpha}(\bar{g}_\infty)$ and $\phi_k \to n-1$ in $C^{0,\alpha}(\bar{g}_\infty)$, as well as nonzero functions $u_k$ such that
    \begin{align} \label{eq:6018608}
        (\Delta_{g_k} + \phi_k) u_k = 0.
    \end{align}
    Normalizing, we may assume $\norm{u_k}_{L^2(\bar{g}_\infty)} = 1$. Standard elliptic estimates and the $C^{2,\alpha}$-convergence $g_k \to \bar{g}_\infty$ imply a uniform bound
    \begin{align}
        \norm{u_k}_{H^2(\bar{g}_\infty)} \leq C\left( \norm{\Delta_{g_k} u_k}_{L^2(\bar{g}_\infty)} + \norm{u_k}_{L^2(\bar{g}_\infty)} \right) \leq C.
    \end{align}
    We can then pass to a subsequence and obtain a limit $u_k \to u$ in $H^1(\bar{g}_\infty)$, with $\norm{u}_{L^2(\bar{g}_\infty)} = 1$. Now, \eqref{eq:6018608} implies that for any smooth function $\eta$ on $\mathcal{S}$,
    \begin{align}
        -\int_{\mathcal{S}} \inner{\nabla^{g_k} \eta}{\nabla^{g_k} u_k} \, \dvol_{g_k} + \int_{\mathcal{S}} \phi_k u_k \eta \, \dvol_{g_k} = 0.
    \end{align}
    Taking $k \to \infty$, we get
    \begin{align}
        -\int_{\mathcal{S}} \inner{\nabla^{\bar{g}_\infty} \eta}{\nabla^{\bar{g}_\infty} u} \, \dvol_{\bar{g}_\infty} + \int_{\mathcal{S}} (n-1) u \eta \, \dvol_{\bar{g}_\infty} = 0,
    \end{align}
    so $(\Delta_{\bar{g}_\infty} + (n-1))u = 0$ weakly. By elliptic regularity, $u$ is smooth and the PDE holds classically. Since the first nonzero eigenvalue of $-\Delta_{\bar{g}_\infty}$ on functions $\mathcal{S} \to \R$ is strictly larger than $n-1$, we must have $u=0$. But this contradicts the fact that $\norm{u}_{L^2(\bar{g}_\infty)} = 1$.
\end{proof}

\begin{proof}[Proof of Corollary \ref{cor:KF-tangential}]
    Let $\Sigma_\rho \subset M$ be a CMC leaf, $g_\rho$ and $A_\rho$ its induced metric and second fundamental form, and $H$ the mean curvature function as in Theorem \ref{thm:cmc-foliation}. Let $\Sigma_{\rho,t}$ be the image of $\Sigma_\rho$ under the time $t$ flow of $W$, and $H_{\rho,t}$ its mean curvature. By the first variation of mean curvature on a CMC hypersurface in a Ricci-flat manifold (e.g. \cite{leegeometric}*{p. 32}), we have at each point on $\Sigma_\rho$
    \begin{align} \label{eq:first-var-H}
        \frac{d}{dt}\Big|_{t=0} H_{\rho,t} = -\Delta_{g_\rho} \inner{W}{\nu} - |A_\rho|^2 \inner{W}{\nu}
    \end{align}
    where $\nu = -\frac{\nabla H}{|\nabla H|}$ is the outward unit normal to $\Sigma_\rho$. But $W$ is a Killing field, so $H_{\rho,t} = \frac{n-1}{\rho}$ for all $t$. Combining with \eqref{eq:first-var-H} yields
    \begin{align}
        \Delta_{g_\rho} \inner{W}{\nu} + |A_\rho|^2 \inner{W}{\nu} = 0.
    \end{align}
    Multiplying both sides of this equation by $\rho^2$ gives
    \begin{align}
        \Delta_{\rho^{-2}g_\rho} \inner{W}{\nu} + \rho^2 |A_\rho|_g^2 \inner{W}{\nu} = 0.
    \end{align}
    By Lemma \ref{lem:g-A-convergence}, $\rho^{-2}g_\rho \to \bar{g}_\infty$ and $\rho^2 |A_\rho|_g^2 \to n-1$ smoothly as $\rho \to \infty$. Then by Lemma \ref{lem:convergence-eigenfn}, $\inner{W}{\nu} \equiv 0$ and hence $\inner{W}{\nabla H} = 0$ whenever $\rho$ is sufficiently large. This establishes the first claim in the corollary.

    Next, we recall a general identity for vector fields $X$ and $Y$:
    \begin{align}
        \L_X(Y^\flat) = (\L_X Y)^\flat + (\L_X g)(Y,\cdot).
    \end{align}
    Applying this with $X=W$ and $Y=\nabla H$ gives
    \begin{align}
        [W,\nabla H] = \L_W(\nabla H) = (\L_W dH)^\sharp = (d\L_W H)^\sharp = (d\inner{W}{\nabla H})^\sharp = 0,
    \end{align}
    which completes the proof.
\end{proof}

\subsection{Finding global Killing fields}

For the rest of this section, let $(M^n,g)$ be a complete Ricci-flat ALE space of dimension $n \geq 4$ with group at infinity $\Gamma \neq \{1\}$. Write $\mathcal{S} = \Sph^{n-1}/\Gamma$, and $\bar{g}_\infty$ for the round metric on $\mathcal{S}$ with constant sectional curvature 1. Using Theorem \ref{thm:ALE-decay-rate}, we fix a diffeomorphism $\Phi: M \setminus K \to (\R^n \setminus B_R(0))/\Gamma$ such that for each $k \geq 0$,
\begin{align} \label{eq:ALE-diffeo-order-n}
    |\nabla^k(\Phi^*\bar{g} - g)|_g \leq \O(r^{-n-k}),
\end{align}
where $\bar{g}$ is the Euclidean metric on $(\R^n \setminus B_R(0))/\Gamma$.
Given a vector field $\bar{U}$ on $\mathcal{S}$, we can radially extend it to a linearly growing vector field on $(\R^n \setminus B_R(0))/\Gamma$ also denoted by $\bar{U}$. We will say that a vector field $U$ on $M$ \emph{extends} $\bar{U}$ if $\Phi_*U = \bar{U}$ on $(\R^n \setminus B_R(0))/\Gamma$.

\begin{proposition} \label{prop:approx-KF}
    Let $\bar{U}$ be a Killing field on $(\mathcal{S},\bar{g}_\infty)$, and let $U$ be a smooth vector field on $M$ which extends $\bar{U}$.
    Then there exist a smooth vector field $W$ on $M$ and $\epsilon \in (0,\frac{1}{2})$ such that
    \begin{align}
        \Delta W &= 0, \label{eq:W1} \\
        |\nabla^k(W-U)| &\leq \O(r^{-n+2+\epsilon-k}) \text{ for each } k \geq 0, \label{eq:W2} \\
        |\nabla^k W| &\leq \O(r^{1-k}) \text{ for each } k \geq 0, \label{eq:W3} \\
        |\L_W g| &\leq \O(r^{-n+1+\epsilon}). \label{eq:W4}
    \end{align}
\end{proposition}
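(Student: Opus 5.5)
The plan is to take $W = U - V$, where $V$ solves $\Delta V = \Delta U$ with $V$ decaying, obtained from weighted elliptic theory for the rough Laplacian on vector fields. The first step is to estimate $\Delta U$. On the flat cone $(\R^n\setminus B_R(0))/\Gamma$, the radial extension $\bar U$ of a Killing field of $(\mathcal{S},\bar g_\infty)$ is a linear Killing field, i.e. the restriction of an element of $\mathfrak{so}(n)$ commuting with $\Gamma$. Hence $\bar\nabla^2\bar U=0$ and $\L_{\bar U}\bar g=0$ exactly. Using \eqref{eq:ALE-diffeo-order-n}, the difference of connections $\nabla-\Phi^*\bar\nabla$ is $\O(r^{-n-1})$, with derivatives gaining one power of $r^{-1}$ each, and $|\bar U|=\O(r)$. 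This gives
\begin{align}
|\nabla^k \Delta U| \le \O(r^{-n-1-k}), \qquad |\nabla^k\L_U g|\le \O(r^{-n-k})
\end{align}
on the end, and $U$ is smooth on the compact part. In particular $\Delta U\in C^{k,\alpha}_{\delta-2}$ for every weight $\delta> -n+1$, and also for the slightly larger weight $\delta=-n+2+\epsilon$.

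The second step is to invert $\Delta$ on vector fields. On a Ricci-flat space the Bochner formula gives $\Delta X = -\Delta_H X$ (no Ricci term), so $\Delta$ agrees with the Hodge Laplacian on $1$-forms up to sign. By the weighted Fredholm theory on ALE spaces (Bartnik, Lockhart--McOwen, in their orbifold versions), $\Delta: C^{k+2,\alpha}_\delta\to C^{k,\alpha}_{\delta-2}$ is Fredholm when $\delta$ avoids the indicial roots. For $1$-forms on $\R^n/\Gamma$ these are integers, and none lies in the interval $(2-n,0)$. Fix $\delta=-n+2+\epsilon$ with small $\epsilon\in(0,\tfrac12)$. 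This weight is nonexceptional and lies in $(2-n,0)$, so by duality with weight $-n-\delta+2=-\epsilon$, surjectivity reduces to showing that decaying harmonic vector fields vanish. A decaying $X$ with $\Delta X=0$ satisfies
\begin{align}
\Delta|X|^2 = 2|\nabla X|^2\ge 0,
\end{align}
so the maximum principle (valid across orbifold points by working on local covers) forces $X=0$. Hence we can solve $\Delta V=\Delta U$ with $V\in C^{k+2,\alpha}_{-n+2+\epsilon}$ for all $k$, meaning $|\nabla^k V|\le \O(r^{-n+2+\epsilon-k})$. Setting $W=U-V$ gives \eqref{eq:W1} and \eqref{eq:W2}. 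Then \eqref{eq:W3} follows from $|\nabla^k U|=\O(r^{1-k})$, which in turn comes from the linear growth of $\bar U$ and the metric comparison.

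The last step is \eqref{eq:W4}:
\begin{align}
\L_W g=\L_U g-\L_V g = \O(r^{-n})+\O(|\nabla V|) = \O(r^{-n+1+\epsilon}).
\end{align}

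I expect the main obstacle to be the Fredholm and surjectivity step. One has to identify the exceptional weights for the vector Laplacian on the cone $\R^n/\Gamma$, check that $-n+2+\epsilon$ avoids them and that no cokernel appears, and justify the weighted theory and the maximum principle on an orbifold with isolated singularities. I would handle this by lifting to local uniformizing charts and quoting the manifold theory there, with the cokernel argument done directly via the $|X|^2$ subharmonicity.
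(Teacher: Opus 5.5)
Your proof is correct and follows essentially the same route as the paper. You bound $\Delta U$ by $\O(r^{-n-1-k})$, apply weighted Fredholm theory for the rough Laplacian at the nonexceptional weight $-n+2+\epsilon$, and obtain surjectivity by showing that harmonic vector fields decaying like $r^{-\epsilon}$ vanish.

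The differences are minor. You bound $\Delta U$ directly from the linearity of $\bar U$ and the connection difference, rather than from the identity $\div(\L_U g)-\frac{1}{2}\nabla(\tr\L_U g)=\Delta U+\Ric(U)$. You also dispose of the cokernel with the maximum principle for the subharmonic function $|X|^2$. That argument is valid, and it is more elementary than the paper's route of upgrading the decay to $\O(r^{-n+1})$ via Kr\"oncke--Petersen and then using an $L^2$ Bochner integration by parts.
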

\begin{proof}
    Let $h = \Phi^*\bar{g} - g$. The vector field $\bar{U}$ on $(\R^n \setminus B_R(0))/\Gamma$ satisfies $|\bar{\nabla}^k \bar{U}|_{\bar{g}} \leq \O(r^{1-k})$ for each $k \geq 0$, where $\bar{\nabla}$ is the Euclidean derivative. Using \eqref{eq:ALE-diffeo-order-n}, one can deduce
    \begin{align} \label{eq:nabla-k-U}
        |\nabla^k U| \leq \O(r^{1-k}).
    \end{align}
    Using \eqref{eq:ALE-diffeo-order-n} and \eqref{eq:nabla-k-U}, we also have on $M \setminus K$ that
    \begin{equation} \label{eq:nablakLuGbound}
        |\nabla^k \L_U g| = |\nabla^k \L_U h| = |\nabla^k (\nabla U \ast h + U \ast \nabla h)| \leq \O(r^{-n-k}) \text{ for each } k \geq 0.
    \end{equation}
    Next, recall the general relation
    \begin{align}
        \div(\L_U g) - \frac{1}{2} \nabla (\tr \L_U g) = \Delta U + \Ric(U).
    \end{align}
    By \eqref{eq:nablakLuGbound} and Ricci-flatness, this implies
    \begin{align} \label{eq:Delta-Ui-decay}
        |\nabla^k \Delta U| \leq C|\nabla^{k+1} \L_U g| \leq \O(r^{-n-1-k}) \quad \text{for all } k \geq 0.
    \end{align}
    For $k \geq 0$, $\alpha \in (0,1)$, and $\beta \in \R$, let $C^{k,\alpha}_\beta$ be the weighted H\"older space of vector fields $X$ on $M$ such that
    \begin{align}
        \norm{X}_{C^{k,\alpha}_\beta} := \sum_{j=0}^k \left( \sup_{x \in M} r(x)^{j-\beta} |\nabla^j X(x)| \right) + \sup_{x \in M} r(x)^{k+\alpha-\beta} [\nabla^k X]_{C^\alpha(B_{r(x)/2}(x))}
    \end{align}
    is finite. By standard elliptic theory on ALE spaces (see for instance \cite{kroncke-szabo}*{\S 4.1}), we have that for all $k \in \mathbb{N}_0$, $\alpha \in (0,1)$, and all $\beta \in \R$ outside some discrete set $\mathcal{D} \subset \R$,
    \begin{align} \label{eq:Delta-mapping}
        \Delta: C^{k+2,\alpha}_{\beta+2} \to C^{k,\alpha}_{\beta}
    \end{align}
    is Fredholm with image
    \begin{align}
        \left\{ Y \in C^{k,\alpha}_\beta : \int_M \inner{Y}{X} = 0 \text{ for all } X \in \mathcal{H}_{-\beta-n} \right\},
    \end{align}
    where $\mathcal{H}_{-\beta-n}$ is the space of harmonic vector fields in $C^{\ell,\alpha}_{-\beta-n}$ for arbitrary $\ell$. We let $\beta = -n+\epsilon$, where $\epsilon \in (0,\frac{1}{2})$ is chosen so that $\beta \notin \mathcal{D}$. Then any $X \in \mathcal{H}_{-\beta-n}$ satisfies $|X| \leq \O(r^{-\epsilon})$ and is therefore in $L^p$ for some $p \in (1,\infty)$. By \cite{kroncke-petersen}*{Proposition 4.3}, this promotes to $|X| \leq \O(r^{-n+1})$. Thus $X$ is an $L^2$ harmonic vector field, which must be zero by the Bochner formula and an integration-by-parts argument. Hence, \eqref{eq:Delta-mapping} is surjective for $\beta = -n+\epsilon$.
    
    By \eqref{eq:Delta-Ui-decay}, we have $-\Delta U \in C^{k,\alpha}_{-n+\epsilon}$ for all $k \geq 0$ and $\alpha \in (0,1)$. Hence there is a vector field $\hat{U} \in C^{2,\alpha}_{-n+2+\epsilon}$ such that
    \begin{align}
        \Delta \hat{U} = -\Delta U.
    \end{align}
    Standard weighted elliptic estimates yield $\hat{U} \in C^{k+2,\alpha}_{-n+2+\epsilon}$ for all $k \geq 0$. So $W := U + \hat{U}$ satisfies \eqref{eq:W1} and \eqref{eq:W2}. Combining with \eqref{eq:nabla-k-U} and \eqref{eq:nablakLuGbound}, we obtain \eqref{eq:W3} and \eqref{eq:W4} respectively.
\end{proof}

Recall that the Lichnerowicz Laplacian $\Delta_L$ is defined by \eqref{eq:lichnerowicz-laplacian} on a Ricci-flat space. The following identity is well-known: see for instance \cite{brendle2013rotational}*{Theorem 4.1} (taking $X=0$ there).
\begin{lemma} \label{lem:VF-L-identity}
    On a Ricci-flat space $(M,g)$, if $W$ is a vector field satisfying $\Delta W = 0$, then $\Delta_L(\L_W g) = 0$.
\end{lemma}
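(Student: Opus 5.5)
The cleanest route is through diffeomorphism invariance of the Ricci tensor, combined with the standard decomposition of the linearized Ricci operator into a Lichnerowicz part and a gauge part. Let $h = \L_W g$ and let $D\Ric_g$ denote the linearization of $g \mapsto \Ric(g)$ at $g$. On a Ricci-flat space, and with the paper's sign conventions \eqref{eq:Rm-convention} and \eqref{eq:lichnerowicz-laplacian}, I expect the standard formula
\begin{align}
    D\Ric_g(h) = -\tfrac{1}{2}\Delta_L h + \tfrac{1}{2}\L_{V(h)} g, \qquad V(h) := \left(\div h - \tfrac{1}{2}\nabla \tr h\right)^\sharp .
\end{align}
I would derive this once directly from the first-variation formula for the Christoffel symbols. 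That formula gives
\begin{align}
    D\Ric(h)_{ij} = \tfrac{1}{2}\left(\nabla_k\nabla_i h_{jk} + \nabla_k\nabla_j h_{ik} - \Delta h_{ij} - \nabla_i\nabla_j \tr h\right).
\end{align}
Commuting the derivatives in the first two terms produces $\nabla_i(\div h)_j + \nabla_j(\div h)_i$ plus curvature terms. On a Ricci-flat metric the Ricci contractions drop out, and the remaining terms are $\Rm(h)$ terms. These combine with $-\Delta h$ to give $-\Delta_L h$.

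Next, apply this with $h = \L_W g$. Let $\phi_t$ be the flow of $W$; the argument is local, so orbifold points cause no issue. Then $\Ric(\phi_t^* g) = \phi_t^*\Ric(g) = 0$. Differentiating at $t=0$ gives
\begin{align}
    D\Ric_g(\L_W g) = \L_W \Ric(g) = 0.
\end{align}
For the gauge term, the identity already used in the proof of Proposition \ref{prop:approx-KF},
\begin{align}
    \div(\L_W g) - \tfrac{1}{2}\nabla \tr(\L_W g) = \Delta W + \Ric(W),
\end{align}
gives $V(\L_W g) = \Delta W + \Ric(W) = 0$, by the hypothesis $\Delta W = 0$ and Ricci-flatness. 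Substituting into the decomposition yields $-\tfrac{1}{2}\Delta_L(\L_W g) = 0$, which is the claim.

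The only delicate step is confirming that the linearization formula holds with exactly the paper's curvature sign convention, so that the curvature term is $+2\Rm(h)$ rather than $-2\Rm(h)$. I would check this by tracing the convention on a round sphere, or by redoing the commutator computation explicitly.

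As a fallback, one can compute $\Delta(\nabla_i W_j + \nabla_j W_i)$ directly. Commuting $\nabla_k\nabla_k\nabla_i W_j$ into $\nabla_i \Delta W_j$ leaves the following terms:
\begin{itemize}
    \item Ricci terms, which vanish.
    \item A term $(\nabla_k R_{kijl})W_l$, which vanishes by the contracted second Bianchi identity on a Ricci-flat metric.
    \item Terms of the form $R_{ikjl}\nabla_k W_l$, which after symmetrization in $i,j$ give precisely $-2\Rm(h)$.
\end{itemize}
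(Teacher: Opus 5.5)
Your argument is correct. It is the standard proof of the identity that the paper does not prove itself but cites from Brendle (Theorem 4.1 with $X=0$): diffeomorphism invariance gives $D\Ric_g(\L_W g)=\L_W\Ric=0$, and the gauge term vanishes because $\div(\L_W g)-\frac{1}{2}\nabla\tr(\L_W g)=\Delta W+\Ric(W)=0$. The sign you flagged also checks out: reading $\Rm_{ijkl}=\inner{\Rm(e_i,e_j)e_k}{e_l}$ under the convention \eqref{eq:Rm-convention}, $\Rm_{ijij}$ is the sectional curvature and $\Rm(g)=\Ric$, so $\Delta h+2\Rm(h)$ is the standard Lichnerowicz Laplacian on a Ricci-flat metric and your formula $D\Ric_g(h)=-\frac{1}{2}\Delta_L h+\frac{1}{2}\L_{V(h)}g$ holds as written.
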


Next, we show that a dimension bound on $\ker_{L^2}(\Delta_L)$ implies the existence of Killing fields on $(M,g)$. This is a direct analog of \cite{law}*{Proposition 3.8} for Ricci-flat ALE spaces. Recall that $\I(M,g)$ denotes the isometry group of $(M,g)$, $\I^0(M,g)$ is its identity component, and $\mathfrak{I}(M,g)$ is its Lie algebra (i.e. the Lie algebra of Killing fields). Define $\bar{d} = \dim \I(\mathcal{S},\bar{g}_\infty)$.

\begin{proposition} \label{prop:exact-KFs}
    Suppose $\dim\ker_{L^2}(\Delta_L) \leq d$ for some nonnegative integer $d$. Then there exists a $(\bar{d}-d)$-dimensional vector space $\mathcal{V}$ of Killing fields on $(M,g)$ such that for each $W \in \mathcal{V}$,
    \begin{enumerate}[label=(\alph*)]
        \item $[W,\nabla H] = 0$.
        \item $\inner{W}{\nabla H} = 0$, i.e. $W$ is tangent to the CMC leaves.
        \item $W$ is a Killing field for $(M,g)$ and for each sufficiently far CMC leaf.
        \item $|\nabla^k W| \leq \O(r^{1-k})$ for each $k \geq 0$.
        \item There exists a unique Killing field $\bar{U}$ on $(\mathcal{S},\bar{g}_\infty)$ such that if $U$ is a smooth vector field on $M$ which extends $\bar{U}$, then $|\nabla^k (W-U)| \leq \O(r^{-n+2+\epsilon-k})$ for each $k \geq 0$. The map $\mathcal{V} \to \mathfrak{I}(\mathcal{S},\bar{g}_\infty)$, $W \mapsto \bar{U}$ is linear and injective.
    \end{enumerate}
\end{proposition}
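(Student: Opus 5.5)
The plan is to combine Proposition \ref{prop:approx-KF} with Lemma \ref{lem:VF-L-identity} in a linear-algebra argument, in the spirit of \cite{law}*{Proposition 3.8}.

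\textbf{Step 1: a linear map into $\ker_{L^2}(\Delta_L)$.} Fix a basis $\bar{U}_1,\dots,\bar{U}_{\bar{d}}$ of $\mathfrak{I}(\mathcal{S},\bar{g}_\infty)$. For each $i$, choose a smooth extension $U_i$ of $\bar{U}_i$ to $M$, for instance by cutting off the radial extension near $K$. Proposition \ref{prop:approx-KF} then yields a harmonic vector field $W_i$ satisfying \eqref{eq:W1}--\eqref{eq:W4}. For $\bar{U}=\sum c_i\bar{U}_i$ set $W_{\bar U}=\sum c_iW_i$; this is linear in $\bar U$ and inherits the estimates. By Lemma \ref{lem:VF-L-identity}, $h_{\bar U}:=\L_{W_{\bar U}}g$ lies in $\ker\Delta_L$. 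By \eqref{eq:W4}, $|h_{\bar U}|\le \O(r^{-n+1+\epsilon})$, and $2(-n+1+\epsilon)+n<0$ because $n\ge4$ and $\epsilon<\tfrac12$. Hence $h_{\bar U}\in L^2$.

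This gives a linear map $T:\mathfrak{I}(\mathcal{S},\bar{g}_\infty)\to\ker_{L^2}(\Delta_L)$. Its kernel has dimension at least $\bar d-d$, and we take $\mathcal{V}=\{W_{\bar U}:\bar U\in\ker T\}$.

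\textbf{Step 2: injectivity, hence $\dim\mathcal{V}=\bar d-d$.} The map $\bar U\mapsto W_{\bar U}$ is injective on $\ker T$. Indeed, if $W_{\bar U}=0$, then \eqref{eq:W2} gives $|U|\le \O(r^{-n+2+\epsilon})$. But $U$ is the radial extension of $\bar U$, which grows linearly unless $\bar U=0$, and $-n+2+\epsilon<1$. This also proves the uniqueness in (e): if two Killing fields $\bar U,\bar U'$ both satisfied (e) for the same $W$, then their extensions would differ by $o(r)$, which forces $\bar U=\bar U'$. Linearity and injectivity of $\mathcal V\to\mathfrak I(\mathcal S,\bar g_\infty)$ follow. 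By construction every $W\in\mathcal V$ is a Killing field on $M$, and (d) is \eqref{eq:W3}.

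\textbf{Step 3: properties (a)--(c).} Corollary \ref{cor:KF-tangential} immediately gives (a) and (b) on sufficiently large leaves. Since $W$ is tangent to $\Sigma_\rho$, its flow preserves each leaf. The flow is by isometries of $g$, so it restricts to isometries of the induced metric $g_\rho$, which gives (c).

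If (a) and (b) must hold on all of the end rather than only far out, I expect the main subtlety to be the uniformity in Corollary \ref{cor:KF-tangential}. The threshold $\rho$ there comes from Lemma \ref{lem:convergence-eigenfn} applied to the leaf geometry, so it does not depend on $W$, and finite-dimensionality makes the statement uniform over $\mathcal V$. The rest is bookkeeping: checking that the weighted decay indeed places $\L_Wg$ in $L^2$, which is exactly where $n\ge4$ (through Theorem \ref{thm:ALE-decay-rate}) is used.
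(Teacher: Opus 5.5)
Your proposal is correct and follows essentially the same route as the paper. Both extend a basis of $\mathfrak{I}(\mathcal{S},\bar{g}_\infty)$, apply Proposition \ref{prop:approx-KF} and Lemma \ref{lem:VF-L-identity} to get a linear map into $\ker_{L^2}(\Delta_L)$, take $\mathcal{V}$ from its kernel, and read off (a)--(e) from Corollary \ref{cor:KF-tangential} and the estimates \eqref{eq:W2}--\eqref{eq:W3}. One small correction: injectivity only gives $\dim\mathcal{V}=\dim\ker T\geq \bar{d}-d$, not equality, so to get exactly $\bar{d}-d$ you should pass to a subspace (the paper is equally terse on this).
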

\begin{proof}
    Let $\{\bar{U}_i\}_{i=1}^{\bar{d}}$ be a basis for $\mathfrak{I}(\mathcal{S},\bar{g}_\infty)$, and let $\{U_i\}_{i=1}^{\bar{d}}$ be smooth vector fields on $M$ which extend $\bar{U}_i$. Proposition \ref{prop:approx-KF} gives vector fields $\{W_i\}_{i=1}^{\bar{d}}$ and $\epsilon \in (0,\frac{1}{2})$ such that
    \begin{align}
        \Delta W_i &= 0, \label{eq:Deltaf-W} \\
        |\nabla^k(W_i-U_i)| &\leq \O(r^{-n+2+\epsilon-k}) \text{ for each } k \geq 0, \label{eq:O1-bound} \\
        |\nabla^k W_i| &\leq \O(r^{1-k}) \text{ for each } k \geq 0, \label{eq:kWi-bound} \\
        |\L_{W_i} g| &\leq \O(r^{-n+1+\epsilon}). \label{eq:LWg-estimate}
    \end{align}
    By \eqref{eq:Deltaf-W}, the decay \eqref{eq:LWg-estimate}, and Lemma \ref{lem:VF-L-identity}, we have $\L_{W_i} g \in \ker_{L^2}(\Delta_L)$. Since the vector fields $U_i$ are linearly independent, \eqref{eq:O1-bound} implies that the vector fields $W_i$ are linearly independent too.

    Let $\mathcal{V}' = \mathrm{span}\{W_i\}_{i=1}^{\bar{d}}$, so that $\dim\mathcal{V}' = \bar{d}$ by the linear independence. Define a map $\mathcal{R}: \mathcal{V}' \to \mathfrak{I}(\mathcal{S},\bar{g}_\infty)$ by sending $W_i$ to $\bar{U}_i$ and extending linearly. This map is clearly injective. Also define a map $\mathcal{T}: \mathcal{V}' \to \ker_{L^2}(\Delta_L)$ by $W \mapsto \L_W g$.

    The assumption $\dim\ker_{L^2}(\Delta_L) \leq d$ implies $\dim \ker \mathcal{T} \geq \bar{d}-d$. Let $\mathcal{V} = \ker\mathcal{T}$. Then every $W \in \mathcal{V}$ is a Killing field for $(M,g)$. Combined with Corollary \ref{cor:KF-tangential}, this proves (a), (b), and (c). Part (d) follows from \eqref{eq:kWi-bound}. Part (e) follows from restricting the map $\mathcal{R}: \mathcal{V}' \to \mathfrak{I}(\mathcal{S},\bar{g}_\infty)$ to $\mathcal{V}$, and applying \eqref{eq:O1-bound}.
\end{proof}

For all large $\rho > 0$, let $\Sigma_\rho \subset M$ be the CMC leaf with mean curvature $H = \frac{n-1}{\rho}$. Using Theorem \ref{thm:cmc-foliation}(b), we identify $\Sigma_\rho$ with a Riemannian manifold $(\mathcal{S}, g_\rho)$. We set up the following:
\begin{itemize}
    \item Let $\mathcal{V}_\rho$ be the restriction of Killing fields in $\mathcal{V}$ to $\Sigma_\rho$. By Proposition \ref{prop:exact-KFs}, $\mathcal{V}_\rho \subset \mathfrak{I}(\mathcal{S},g_\rho)$.
    \item Let $\mathfrak{g}$ be the Lie subalgebra of $\mathfrak{I}(M,g)$ generated by $\mathcal{V}$, and let $G$ be the unique connected Lie subgroup of $\I^0(M,g)$ generated by $\mathfrak{g}$.
    \item Likewise, let $\mathfrak{g}_\rho$ be the Lie subalgebra of $\mathfrak{I}(\mathcal{S},g_\rho)$ generated by $\mathcal{V}_\rho$, and let $G_\rho$ be the unique connected Lie subgroup of $\I^0(\mathcal{S},g_\rho)$ generated by $\mathfrak{g}_\rho$.
\end{itemize} 
The next corollary immediately follows from Corollary \ref{cor:KF-tangential}.
\begin{corollary} \label{cor:g-jacobi}
    For any $X \in \mathfrak{g}$, we have $[X,\nabla H] = 0$ and $\inner{X}{\nabla H} = 0$.
\end{corollary}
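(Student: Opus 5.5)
The claim is that every $X \in \mathfrak{g}$ satisfies $[X,\nabla H] = 0$ and $\inner{X}{\nabla H} = 0$. The point is that $\mathfrak{g}$ is not just $\mathcal{V}$: it also contains iterated brackets of elements of $\mathcal{V}$. My plan is to show that every element of $\mathfrak{g}$ is still a Killing field on $(M,g)$, and then read off both identities from Corollary \ref{cor:KF-tangential}. There is one caveat, since that corollary only gives the identities on sufficiently far leaves. The statement should therefore be read on the end region where $H$ is defined and the corollary applies. Alternatively, it follows on the whole domain of $H$ by a unique continuation argument, which I sketch at the end.

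\textbf{Step 1.} I will show $\mathfrak{g} \subset \mathfrak{I}(M,g)$. By construction, $\mathfrak{g}$ is the Lie subalgebra of $\mathfrak{I}(M,g)$ generated by $\mathcal{V}$. Concretely, it is the linear span of the iterated brackets $[W_1,[W_2,\dots,W_k]]$ with $W_j \in \mathcal{V}$. The Killing fields form a Lie algebra, because $\L_{[X,Y]} g = \L_X \L_Y g - \L_Y \L_X g$. Hence each such bracket is a Killing field.

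\textbf{Step 2.} Each $X \in \mathfrak{g}$ is then a Killing field on the Ricci-flat ALE space $(M,g)$ with nontrivial group at infinity. Corollary \ref{cor:KF-tangential} gives $\inner{X}{\nabla H} = 0$ and $[X,\nabla H] = 0$ on all sufficiently large CMC leaves. This is the claimed statement.

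I would also record a direct check that does not appeal to the corollary a second time. Let $\mathfrak{c}$ be the set of vector fields $X$ with $X(H) = 0$ and $[X,\nabla H] = 0$. Then $\mathfrak{c}$ is closed under brackets by the Jacobi identity, since
\begin{align}
    [[X,Y],\nabla H] = [X,[Y,\nabla H]] - [Y,[X,\nabla H]] = 0,
\end{align}
and
\begin{align}
    [X,Y]H = X(YH) - Y(XH) = 0.
\end{align}
So $\mathcal{V} \subset \mathfrak{c}$, which holds by Proposition \ref{prop:exact-KFs}(a),(b), already forces $\mathfrak{g} \subset \mathfrak{c}$.

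\textbf{Main obstacle.} The only subtle point is the domain on which the identities hold, since "sufficiently far" could depend on $X$. Because $\mathfrak{g}$ is finite-dimensional, I can take a basis and use the largest threshold among the basis elements. Linearity then gives a uniform region for all of $\mathfrak{g}$. If the identities are needed on the whole domain of $H$, I would argue as follows. The function $\inner{X}{\nabla H}$ and the field $[X,\nabla H]$ are real-analytic, because Ricci-flat metrics are analytic in harmonic coordinates. Analytic continuation then extends their vanishing from the far region.
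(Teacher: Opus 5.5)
Your proposal is correct and takes the same route as the paper. The paper simply notes that $\mathfrak{g}$ is by definition a Lie subalgebra of $\mathfrak{I}(M,g)$, so every $X \in \mathfrak{g}$ is a Killing field and Corollary \ref{cor:KF-tangential} applies directly; the ``sufficiently far'' threshold is in fact independent of $X$, since it comes only from Lemmas \ref{lem:g-A-convergence} and \ref{lem:convergence-eigenfn}, and your Jacobi-identity check is a valid but unneeded alternative, as is the analytic-continuation aside, because the statement is only meant on the far end.
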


With the help of Corollary \ref{cor:g-jacobi}, the following assertions are proved in exactly the same manner as \cite{law}*{Lemma 3.19}. Thus we omit the details.
\begin{lemma} \label{lem:level-set-isos}
    \begin{enumerate}[label=(\alph*)]
        \item The restriction map $\mathcal{V} \to \mathcal{V}_\rho$ is a linear isomorphism.
        \item The restriction map $\mathfrak{g} \to \mathfrak{g}_\rho$ is a Lie algebra isomorphism.
        \item If $G_\rho$ is closed in $\I(\mathcal{S}, g_\rho)$, then $G$ is a compact Lie group acting smoothly and isometrically on $(M,g)$, and the restriction map $G \to G_\rho$ is a Lie group isomorphism.
    \end{enumerate}
\end{lemma}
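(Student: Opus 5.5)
The three assertions will be proved in order, with Corollary \ref{cor:g-jacobi} as the main tool. The idea is that the flow of $\nabla H$ (normalized) carries leaves to leaves and commutes with $\mathfrak{g}$, so a Killing field in $\mathfrak{g}$ is determined by its restriction to any single leaf.

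For (a), the restriction map $\mathcal{V} \to \mathcal{V}_\rho$ is linear and surjective by definition, so only injectivity needs proof. Suppose $W \in \mathcal{V}$ vanishes on $\Sigma_\rho$. By Proposition \ref{prop:exact-KFs}(a), $[W,\nabla H]=0$, so the flow $\psi_t$ of $\nabla H$ (defined on the end) satisfies $\psi_{t*}W = W$. Since $H$ is constant on leaves, $\psi_t$ maps $\Sigma_\rho$ into leaves. Hence $W$ vanishes on $\psi_t(\Sigma_\rho)$ for all $t$ with $\psi_t$ defined, and these sets sweep out an open subset of the end. A Killing field vanishing on an open set vanishes identically on the connected orbifold $M$ (it is determined by its 1-jet at a point). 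Therefore $W=0$.

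For (b), the restriction map $\mathfrak{g} \to \mathfrak{I}(\mathcal{S},g_\rho)$ is well defined: every $X \in \mathfrak{g}$ is tangent to $\Sigma_\rho$ by Corollary \ref{cor:g-jacobi}, and it restricts to a Killing field of the induced metric, since a Killing field of $M$ tangent to a submanifold preserves the induced metric. Brackets of tangent fields restrict to brackets, so the map is a Lie algebra homomorphism. Its image contains $\mathcal{V}_\rho$ and is a Lie algebra, so it contains $\mathfrak{g}_\rho$. Conversely, the image is generated by the restrictions of $\mathcal{V}$, so it equals $\mathfrak{g}_\rho$. Injectivity follows by the same flow argument as in (a), now using $[X,\nabla H]=0$ for all $X\in\mathfrak{g}$.

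For (c), I expect the main obstacle to be showing that $G$ is compact, since a connected Lie subgroup of $\I^0(M,g)$ need not be closed. My approach is as follows. Each element of $G$ commutes with the flow of $\nabla H$ and preserves $H$, because it is generated by fields tangent to the leaves and commuting with $\nabla H$; hence it preserves each $\Sigma_\rho$. This gives a restriction homomorphism $G \to \I(\mathcal{S},g_\rho)$ whose image is $G_\rho$, since its differential is the isomorphism from (b). The homomorphism is injective: an isometry of $M$ restricting to the identity on $\Sigma_\rho$ commutes with $\psi_t$, so it is the identity on an open set and hence everywhere. Now assume $G_\rho$ is closed. Then $G_\rho$ is compact, because $\I(\mathcal{S},g_\rho)$ is compact. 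I would then take the closure $\bar{G}$ of $G$ in $\I(M,g)$. Its elements also preserve $\Sigma_\rho$ and restrict into $G_\rho$ by closedness, and restriction stays injective on $\bar{G}$. The Lie algebra of $\bar{G}$ therefore injects into $\mathfrak{I}(\mathcal{S},g_\rho)$ with image in $\mathrm{Lie}(G_\rho)=\mathfrak{g}_\rho$. Its dimension equals that of $\mathfrak{g}$, so $\bar{G}^0 = G$ and $G$ is closed. Restriction $G \to G_\rho$ is then a continuous bijective homomorphism of Lie groups with the same dimension, hence an isomorphism, and $G$ is compact. Smoothness and isometry of the action come from $G$ being a closed subgroup of $\I(M,g)$.
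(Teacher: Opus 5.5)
Your proof is correct and follows the route the paper indicates. The paper omits the details and defers to \cite{law}*{Lemma 3.19} together with Corollary \ref{cor:g-jacobi}; as there, invariance of $\mathfrak{g}$ and $G$ under the flow of $\nabla H$ carries vanishing on $\Sigma_\rho$ over to an open set, which gives injectivity, and compactness of $G$ is inherited from the compact group $G_\rho$.

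Two small remarks. First, the unnormalized flow of $\nabla H$ need not map $\Sigma_\rho$ onto a single leaf, but your argument never uses this. Second, the detour through $\bar G$ is unnecessary. Restriction is already a bijective Lie group homomorphism $G \to G_\rho$ with bijective differential, so $G$ is compact in its intrinsic topology and therefore closed in $\I(M,g)$.
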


In the case where $(\mathcal{S},\bar{g}_\infty)$ is homogeneous, i.e. its isometry group acts transitively, the space of Killing fields $\mathfrak{I}(\mathcal{S},\bar{g}_\infty)$ spans the tangent space to $\mathcal{S}$ at each point. We define the quantity $d_1(\mathcal{S}, \bar{g}_\infty) \geq 0$ to be the largest integer $d$ such that
\begin{itemize}
    \item Any Lie subalgebra of $\mathfrak{I}(\mathcal{S},\bar{g}_\infty)$ with dimension $\geq \bar{d} - d$ spans the tangent space to $\mathcal{S}$ at every point.
\end{itemize}

\begin{lemma} \label{lem:exact-KF-subalgebra}
    If $(\mathcal{S},\bar{g}_\infty)$ is homogeneous and $\dim\ker_{L^2}(\Delta_L) \leq d_1(\mathcal{S},\bar{g}_\infty)$, then $\mathfrak{g}_\rho$ spans the tangent space to $\Sigma_\rho$ at every point for all sufficiently large $\rho$.
\end{lemma}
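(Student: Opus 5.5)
Under the hypothesis $\dim\ker_{L^2}(\Delta_L) \leq d_1(\mathcal{S},\bar{g}_\infty)$, Proposition \ref{prop:exact-KFs} provides a space $\mathcal{V}$ of Killing fields on $(M,g)$ with $\dim\mathcal{V} \geq \bar{d} - d_1$. By Proposition \ref{prop:exact-KFs}(e), the map $\mathcal{R}: \mathcal{V} \to \mathfrak{I}(\mathcal{S},\bar{g}_\infty)$, $W \mapsto \bar{U}$, is linear and injective. The plan is to compare $\mathfrak{g}_\rho$ with a Lie subalgebra $\bar{\mathfrak{g}}$ of $\mathfrak{I}(\mathcal{S},\bar{g}_\infty)$ of dimension at least $\bar{d}-d_1$, and then pass the spanning property from the round limit to large leaves by an openness argument.

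\textbf{Step 1: extend $\mathcal{R}$ to a Lie algebra homomorphism on $\mathfrak{g}$.} For $X \in \mathfrak{g}$, Corollary \ref{cor:g-jacobi} gives $\inner{X}{\nabla H} = 0$ and $[X,\nabla H]=0$. So $X$ restricts to a Killing field on each leaf $\Sigma_\rho$ and commutes with the flow of $\nabla H$ (suitably normalized). I would then define $\bar{X} := \lim_{\rho \to \infty} X|_{\Sigma_\rho}$, viewed on $\mathcal{S}$ via the identification \eqref{eq:Sigma-identification}.

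\begin{itemize}
\item For $X \in \mathcal{V}$, the limit exists and equals $\mathcal{R}(X)$ by Proposition \ref{prop:exact-KFs}(e). The error $W-U$ is $\O(r^{-n+2+\epsilon})$, which is $o(r)$, and the leaves converge to round spheres after rescaling by $\rho^{-1}$ (Lemma \ref{lem:g-A-convergence} and Theorem \ref{thm:cmc-foliation}(b)).
\item Brackets of elements of $\mathcal{V}$ satisfy the same kind of estimates. We have $\nabla^k(W-U) = \O(r^{-n+2+\epsilon-k})$, while $[U_1,U_2]$ extends $[\bar{U}_1,\bar{U}_2]$ up to errors controlled by $\Phi$. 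Hence iterated brackets converge to the corresponding brackets of the limits in $C^1$.
\end{itemize}

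It follows that $X \mapsto \bar{X}$ is a Lie algebra homomorphism $\mathfrak{g} \to \mathfrak{I}(\mathcal{S},\bar{g}_\infty)$. Its image $\bar{\mathfrak{g}}$ contains $\mathcal{R}(\mathcal{V})$, so $\dim\bar{\mathfrak{g}} \geq \bar{d}-d_1$. By the definition of $d_1$, $\bar{\mathfrak{g}}$ spans $T_\theta\mathcal{S}$ at every $\theta$.

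\textbf{Step 2: openness.} Fix a basis $\bar{X}_1,\dots,\bar{X}_{n-1}$-spanning subset: by compactness of $\mathcal{S}$, choose finitely many $X_1,\dots,X_N \in \mathfrak{g}$ such that $\{\bar{X}_j\}$ span $T_\theta\mathcal{S}$ uniformly. Concretely, there is $c>0$ with $\max_{J}|\det(\bar{X}_{j_1},\dots,\bar{X}_{j_{n-1}})(\theta)| \geq c$ over $(n-1)$-subsets $J$, for all $\theta$. The rescaled restrictions $\rho^{-1}X_j|_{\Sigma_\rho}$ converge uniformly to $\bar{X}_j$, with norms measured in $\rho^{-2}g_\rho \to \bar{g}_\infty$. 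Therefore the same determinant bound with $c/2$ holds on $\Sigma_\rho$ for $\rho$ large. Since $X_j|_{\Sigma_\rho} \in \mathfrak{g}_\rho$ by Lemma \ref{lem:level-set-isos}(b), $\mathfrak{g}_\rho$ spans $T\Sigma_\rho$ at every point.

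\textbf{Main obstacle.} The hard part is Step 1: justifying the convergence of elements of $\mathfrak{g}$ outside $\mathcal{V}$, that is, iterated brackets, with the normalization by $\rho$. A cleaner route avoids generic elements of $\mathfrak{g}$ entirely. The subalgebra $\bar{\mathfrak{g}}$ is generated by $\mathcal{R}(\mathcal{V})$, and iterated brackets of finitely many generators span it. Each bracket $[W_1,[W_2,\dots]]$ is a polynomial expression in the derivatives of the $W_i$. Since $\nabla^k(W_i - U_i) = \O(r^{-n+2+\epsilon-k})$ with $n \geq 4$, the bracket differs from the corresponding bracket of the $U_i$ by $o(r)$. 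The latter exactly extends the bracket of the $\bar{U}_i$ up to $\O(r^{1-n})$ corrections from $\Phi$. Thus only finitely many explicit brackets need to be controlled, and the limit statement reduces to these estimates.
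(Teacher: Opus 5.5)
Your proposal is correct and follows essentially the paper's route. The paper records exactly the estimates $|\nabla^k W_i| \leq \O(r^{1-k})$, $|\nabla^k (W_i-U_i)| \leq \O(r^{-n+2+\epsilon-k})$ and $|\nabla^k U_i| \leq \O(r^{1-k})$, then defers to \cite{law}*{Lemma 3.21}; that is the iterated-bracket comparison in your ``cleaner route'' (error terms stay $o(r)$, the limit subalgebra has dimension at least $\bar{d}-d_1$ and so spans), followed by the uniform openness argument on the leaves.

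Two small corrections. First, $[U_i,U_j]$ extends $[\bar{U}_i,\bar{U}_j]$ exactly, since both $\Phi_*$ and radial extension commute with brackets, so there is no $\O(r^{1-n})$ correction. Second, in Step 2 the pulled-back fields $X_j|_{\Sigma_\rho}$ on $(\mathcal{S},\bar{g}_\infty)$ themselves converge to $\bar{X}_j$, so the extra factor $\rho^{-1}$ should be dropped.
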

\begin{proof}
    Proposition \ref{prop:exact-KFs} gives a $(\bar{d}-d_1(\mathcal{S}, \bar{g}_\infty))$-dimensional space $\mathcal{V}$ of Killing fields on $(M,g)$. Take a basis $\{W_i\}_{i=1,\ldots,\bar{d}-d_1(\mathcal{S}, \bar{g}_\infty)}$ for $\mathcal{V}$, let $\bar{U}_i$ be the corresponding vector fields from Proposition \ref{prop:exact-KFs}(e), and let $U_i$ be arbitrary extensions thereof. Since the map $\mathcal{V} \to \mathfrak{I}(\mathcal{S},\bar{g}_\infty)$, $W_i \mapsto \bar{U}_i$ is linear and injective, it follows that the $U_i$ are linearly independent.

    Write $X_i = W_i - U_i$. From Propositions \ref{prop:approx-KF} and \ref{prop:exact-KFs}, we have
    \begin{align} \label{eq:WXU-estimates}
        |\nabla^k W_i| \leq \O(r^{1-k}), \quad |\nabla^k X_i| \leq \O(r^{-n+2+\epsilon-k}), \quad |\nabla^k U_i| \leq \O(r^{1-k})
    \end{align}
    for each $k \geq 0$.
    The rest is then identical to \cite{law}*{Lemma 3.21}, so we omit the details.
\end{proof}

The next proposition elucidates the global structure of $(M,g)$ if it admits an isometric action of cohomogeneity one. It is an orbifold version of \cite{law}*{Proposition 3.22}, which follows from the work of Mostert \cite{mostert} in the setting of smooth manifolds and Gonz\'alez \'Alvaro for smooth orbifolds \cite{gonzalez-alvaro-phd}.

\begin{proposition} \label{prop:coho-1-structure}
    Suppose there is a compact, connected Lie group $G$ acting smoothly, almost effectively, and isometrically with cohomogeneity one on $(M,g)$. Denote the projection map by $\pi: M \to M/G$. Then:
    \begin{enumerate}[label=(\alph*)]
        \item The orbit space $M/G$ is homeomorphic to $[0,1)$.
        \item There is a closed subgroup $H$ of $G$ such that the \emph{principal orbits}, i.e. $\pi^{-1}(s)$ for any $s > 0$, are diffeomorphic to $\mathcal{S} \cong G/H$. Thus, each point in a principal orbit has isotropy subgroup $H$.
        \item The \emph{singular orbit} $X = \pi^{-1}(0)$ is diffeomorphic to $G/K$, where $K$ is a closed subgroup of $G$ such that $H \subsetneq K$ and $K/H \cong \mathcal{S}'$ for some spherical space form $\mathcal{S}'$ of dimension $\ell \geq 0$.
        \item $M$ is diffeomorphic to the orbifiber bundle $E = G \times_K \mathrm{Cone}(\mathcal{S}')$ over $G/K \cong X$, where $K$ acts via $k \cdot (g,v) = (gk^{-1}, kv)$, and $kv$ is the radial extension of the action of $K$ on $K/H \cong \mathcal{S'}$.
        \item Every sphere bundle of $E$ is diffeomorphic to $\mathcal{S}$. Thus $M \setminus X$ is diffeomorphic to $(0,\infty) \times \mathcal{S}$, and under this identification the metric $g$ becomes
        \begin{align}
            g = ds^2 + g(s),
        \end{align}
        where $\{g(s)\}_{s>0}$ is a smooth family of $G$-invariant metrics on $G/H \cong \mathcal{S}$.
    \end{enumerate}
\end{proposition}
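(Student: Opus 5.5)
The plan is to reduce Proposition \ref{prop:coho-1-structure} to the standard structure theory of cohomogeneity one actions, due to Mostert for manifolds and Gonz\'alez \'Alvaro for orbifolds, and to use the ALE structure of the end only to fix the topology of the orbit space.

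\textbf{Orbit space and principal orbits.} Since $G$ is compact and acts with cohomogeneity one, $M/G$ is a connected one-dimensional orbifold-with-boundary. By Mostert and Gonz\'alez \'Alvaro it is homeomorphic to $\R$, $[0,\infty)$, $[0,1]$ or $\Sph^1$. I would exclude the compact cases because $M$ is noncompact and $G$-orbits are compact. The case $\R$ (two ends) is excluded because an ALE space has a single end, with $M\setminus K$ connected. Concretely, the orbits in the end are compact and $G$-invariant. Since a Killing field in $\mathfrak{g}$ is tangent to the CMC leaves, the ambient-setting analog of Corollary \ref{cor:g-jacobi} shows that far-out orbits lie inside leaves $\Sigma_\rho$ and, by dimension, coincide with them. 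So the distance to an orbit is a proper function with a single end. This gives (a), with $M/G\cong[0,1)$ after reparametrization.

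Over the interior $(0,1)$, all orbits are principal with a common isotropy group $H$ up to conjugacy, by the principal orbit theorem. Far-out orbits are CMC leaves, so $G/H\cong\Sigma_\rho\cong\mathcal{S}$, which gives (b).

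\textbf{Singular orbit and bundle structure.} For (c) and (d), take the orbit $X=\pi^{-1}(0)$ through a point $p$ with isotropy $K\supseteq H$. I would apply the orbifold slice theorem. A $G$-invariant neighborhood of $X$ is $G\times_K \nu_p$, where $\nu_p$ is the normal space, possibly an orbifold chart quotient $\R^{\ell+1}/\Lambda$. $K$ acts on $\nu_p$ with cohomogeneity one, and its orbits in the unit normal sphere are $K/H$. Since the slice is a cone over $K/H$ with a single cone point, $K/H$ is the quotient of $\Sph^\ell$ by a finite group, i.e.\ a spherical space form $\mathcal{S}'$. We have $H\neq K$ when $\ell\geq 0$, since $X$ has smaller dimension or orbifold isotropy. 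If $K=H$, then $X$ would be exceptional, and the same analysis still gives a cone over a finite quotient; that case falls under $\ell=0$. Gluing the tubular neighborhood to the product region $\pi^{-1}((0,1))\cong(0,1)\times G/H$ along principal orbits yields $M\cong G\times_K\mathrm{Cone}(\mathcal{S}')$.

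\textbf{Metric form.} For (e), let $s$ be the distance to $X$. The normal geodesics from $X$ are orthogonal to all orbits, which is the standard cohomogeneity one fact that the horizontal geodesic meets every orbit orthogonally. Flowing along them identifies $M\setminus X$ with $(0,\infty)\times G/H$, where $s$ is unbounded by completeness and the single end. In these coordinates $g=ds^2+g(s)$ with each $g(s)$ $G$-invariant. The sphere bundles are the level sets of $s$, which are principal orbits, hence diffeomorphic to $\mathcal{S}$.

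The main obstacle is the orbifold slice step: one has to verify that the orbifold structure at $X$ only changes the slice to a cone over a spherical space form, rather than over a sphere. For this I would cite Gonz\'alez \'Alvaro's thesis instead of reproving it.
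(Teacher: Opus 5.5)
\textbf{Overall.} Your route is the paper's. The paper gives no argument of its own: it defers to Mostert, Gonz\'alez \'Alvaro, and Proposition 3.22 of \cite{law}. Your use of the ALE structure is fine:
\begin{itemize}
    \item noncompactness rules out $[0,1]$ and $\Sph^1$;
    \item the single end rules out $\R$;
    \item far CMC leaves are single orbits, so $G/H \cong \mathcal{S}$.
\end{itemize}
Your normal-geodesic derivation of $g = ds^2 + g(s)$ is also fine.

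\textbf{The gap.} The step that fails is the strict inclusion $H \subsetneq K$ in (c) when $\ell = 0$. Your justification, ``since $X$ has smaller dimension or orbifold isotropy'', does not work when $X$ has the same dimension as the principal orbits. The orbifold local group at points of $X$ is not a subgroup of $G$, so it says nothing about $K = G_p$. Your next sentence then explicitly admits $K = H$ and files it under $\ell = 0$. That contradicts (c) rather than proving it.

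\textbf{Why it happens.} At an endpoint of $M/G$ with $\ell = 0$, something must act by $-1$ on the one-dimensional normal slice.
\begin{itemize}
    \item On a manifold that element lies in $K$, so $K/H \cong \Sph^0$ and $[K:H] = 2$.
    \item On an orbifold it can instead come from the local group. Then $X$ is a codimension-one mirror stratum with $K = H$, and $K/H = \Sph^0/\Z_2$ is a point.
\end{itemize}

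\textbf{How to close it.} You need an extra input that excludes mirrors. There are two ways to supply it.
\begin{itemize}
    \item Assume local orientability: local groups in $SO(n)$, hence a singular set of codimension at least $2$. This holds in both of the paper's applications (finitely many orbifold points, resp.\ singular set of codimension at least $4$).
    \item Argue from the standing hypotheses. A mirror $X$ is totally geodesic. So on $(0,\infty) \times G/H$ the mean curvature $m(s)$ of the orbits satisfies $m(0^+) = 0$, and Ricci-flatness gives $m' = -|A_s|^2 \le 0$, where $A_s$ is the second fundamental form of the orbit at distance $s$. Hence orbit volumes are nonincreasing in $s$. This contradicts the unbounded volume growth of the far orbits, which are the CMC leaves of the ALE end. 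Equivalently, apply Cheeger--Gromoll to the orbifold double, which would be a complete Ricci-flat manifold with two ends.
\end{itemize}
Parts (d) and (e) are unaffected, since only the strict inclusion is at stake.
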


\section{A characterization of Eguchi--Hanson space} \label{sec:eh-result}

In this section, we prove Theorem \ref{thm:EH-characterization}. Before proceeding, we require some preparations regarding metrics on $\Sph^3$ and $\RP^3$ with large isometry groups.

\subsection{Metrics on $\Sph^3$ and $\RP^3$ with large isometry groups}

Below, we will identify $\Sph^3$ as the multiplicative group $SU(2)$ of unit quaternions
\begin{align}
    \Sph^3 \cong SU(2) = \{a + b\mathbf{i} + c\mathbf{j} + d\mathbf{k} \in \mathbb{H} : a^2 + b^2 + c^2 + d^2 = 1\}.
\end{align}
Any left-invariant metric $g$ on $SU(2)$ is of the form
\begin{align} \label{eq:left-invariant}
    \tilde{g} = \lambda^2 \sigma_1 \otimes \sigma_1 + \mu^2 \sigma_2 \otimes \sigma_2 + \nu^2 \sigma_3 \otimes \sigma_3,
\end{align}
where $\lambda,\mu,\nu > 0$ are constants, and $\{\sigma_1,\sigma_2,\sigma_3\}$ is dual to a left-invariant frame $\{X_1,X_2,X_3\}$. We may assume $X_1 = \mathbf{i}$, $X_2 = \mathbf{j}$, $X_3 = \mathbf{k}$ in $T_1 SU(2) = \mathfrak{su}(2)$. If $\lambda=\mu=\nu$, then $\tilde{g}$ is a round metric. If exactly two coefficients are equal, then we call $\tilde{g}$ a \emph{Berger metric}. If all three coefficients are different, then we call $\tilde{g}$ a \emph{generic left-invariant metric}. The corresponding isometry groups are well-known and are stated in the next lemma.
\begin{lemma} \label{lem:homog-metrics-S3}
    Let $\tilde{g}$ be a homogeneous metric on $\Sph^3$. Then $\tilde{g}$ is a left-invariant metric \eqref{eq:left-invariant} on $SU(2)$ \cite{sekigawa}*{Theorem C}. The isometry group $\I(SU(2), \tilde{g})$ is given as follows:
    \begin{enumerate}[label=(\alph*)]
        \item If $\tilde{g}$ is a round metric ($\lambda=\mu=\nu$), then $\I(SU(2), \tilde{g})$ is generated by left multiplications in $SU(2)$, right multiplications in $SU(2)$, and inversion $q \mapsto q^{-1}$ in $SU(2)$. Thus,
        \begin{align}
            \I(SU(2), \tilde{g}) = (SU(2)_L \times_{\Z_2} SU(2)_R) \rtimes \Z_2 \cong SO(4) \rtimes \Z_2 \cong O(4).
        \end{align}
        \item If $\tilde{g}$ is a Berger metric (without loss of generality we assume $\lambda \neq \mu = \nu$), then $\I(SU(2), \tilde{g})$ is generated by left multiplications in $SU(2)$, right multiplications by $e^{\mathbf{i}\theta}$ ($\theta \in [0,2\pi)$), and the map $q \mapsto \mathbf{j}^{-1} q\mathbf{j}$. Thus,
        \begin{align}
            \I(SU(2), \tilde{g}) = (SU(2)_L \times_{\Z_2} U(1)_R) \rtimes \Z_2 \cong U(2) \rtimes \Z_2.
        \end{align}
        \item If $\tilde{g}$ is a generic left-invariant metric, then $\I(SU(2), \tilde{g})$ is generated by left multiplications in $SU(2)$, the map $q \mapsto \mathbf{i}^{-1} q \mathbf{i}$, and the map $q \mapsto \mathbf{j}^{-1} q \mathbf{j}$. Thus,
        \begin{align}
            \I(SU(2), \tilde{g}) = SU(2)_L \rtimes (\Z_2)^2.
        \end{align}
    \end{enumerate}
\end{lemma}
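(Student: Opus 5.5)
The first claim, that a homogeneous metric on $\Sph^3$ is left-invariant, is quoted from \cite{sekigawa}, so the work is computing $\I(SU(2),\tilde g)$ for $\tilde g$ as in \eqref{eq:left-invariant}. I will first produce the isometries listed in each case, and then show that there are no others.

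\emph{Existence.} Left multiplications preserve the left-invariant coframe $\{\sigma_i\}$, so they are always isometries. Right multiplication by $p$ acts on $\{\sigma_i\}$ through $\mathrm{Ad}_{p^{-1}}$ on $\mathfrak{su}(2)\cong\R^3$. It is an isometry exactly when $\mathrm{Ad}_{p^{-1}}$ preserves the quadratic form $\lambda^2x_1^2+\mu^2x_2^2+\nu^2x_3^2$. In the round case this holds for every $p$. In the Berger case it holds for $p\in\{e^{\mathbf{i}\theta}\}$, since these are exactly the rotations fixing the $x_1$-axis. Conjugation $q\mapsto \mathbf{j}^{-1}q\mathbf{j}$ is a left translation composed with a right translation, and it acts on $\mathfrak{su}(2)$ by $\mathrm{diag}(-1,1,-1)$. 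This preserves every diagonal form, and likewise conjugation by $\mathbf{i}$ acts by $\mathrm{diag}(1,-1,-1)$. Finally, inversion $q\mapsto q^{-1}$ is an isometry of the round metric, since it is the reflection $(a,b,c,d)\mapsto(a,-b,-c,-d)$ of $\R^4$. In the Berger and generic cases these conjugations are not in the identity component generated by the continuous isometries, so they contribute the $\Z_2$ and $(\Z_2)^2$ factors. The group structures then follow: $SU(2)_L\times SU(2)_R\to SO(4)$ has kernel $\{\pm(1,1)\}$, and in the Berger case the kernel of $SU(2)_L\times U(1)_R$ is again $\Z_2$.

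\emph{Upper bound.} Let $\phi$ be an isometry. Composing with a left translation, we may assume $\phi(1)=1$. The goal is to show that $\phi$ is an automorphism of $SU(2)$, i.e. conjugation by some unit quaternion $p$, possibly composed with inversion in the round case. I would use the classical fact (Ochiai--Takahashi; also Ha--Lee for three-dimensional Lie groups) that for a simple compact group $G$ with a left-invariant metric, $\I^0$ is contained in $L(G)\cdot\mathrm{Aut}$, and the isotropy at $1$ acts on $\mathfrak{su}(2)$ through orthogonal maps preserving the metric. Concretely:
\begin{itemize}
\item The curvature, i.e. the Ricci tensor at $1$, must be preserved by $d\phi_1$.
\item For a Berger or generic metric, the Ricci eigenvalues distinguish the distinguished axis (respectively all three axes).
\item Hence $d\phi_1$ preserves $\tilde g$ and permutes or fixes the eigenlines, so $d\phi_1=\pm\mathrm{Ad}_p$ restricted appropriately.
\item An isometry is determined by its value and differential at one point, which pins $\phi$ down as one of the listed maps.
\end{itemize}
The sign $-\mathrm{Ad}_p$ corresponds to inversion composed with conjugation, and inversion is not an isometry in the non-round cases. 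One checks this because inversion composed with left translation equals right translation, which preserves the metric only under the adjoint condition above.

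The main obstacle is this upper bound. It requires justifying that the isotropy group at $1$ consists of automorphisms, via the Ricci eigenvalues or via Ochiai--Takahashi. It also requires the case bookkeeping in the generic and Berger cases, to verify that the only orthogonal maps of $\R^3$ preserving the form are $\pm$ diagonal sign changes (respectively $O(2)\times O(1)$), and to decide which of these are realized. Since these facts are classical, in the paper I would largely cite them.
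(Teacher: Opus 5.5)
The paper gives no argument for this lemma beyond citing Sekigawa for left-invariance and calling the isometry groups well known. Your existence half is correct, so it plus a citation would match the paper. The upper bound is the real content, and your sketch of it has a genuine gap.

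First, the Ricci eigenvalues need not separate the axes of a generic metric. In the frame $e_1=X_1/\lambda$, $e_2=X_2/\mu$, $e_3=X_3/\nu$ one has $\Ric(e_1,e_1)=\frac{2}{\lambda^2\mu^2\nu^2}\bigl(\lambda^4-(\mu^2-\nu^2)^2\bigr)$, and cyclically. Hence $\Ric(e_1,e_1)=\Ric(e_2,e_2)$ iff $\lambda=\mu$ or $\nu^2=\lambda^2+\mu^2$. For example, $(\lambda,\mu,\nu)=(1,2,\sqrt{5})$ is generic, yet its Ricci eigenvalues are $(0,0,\frac{8}{5})$. So preserving $\tilde g_1$ and $\Ric_1$ only confines $d\phi_1$ to $O(2)\times O(1)$.

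Second, even when the eigenlines are separated (which always happens in the Berger case), you only get $d\phi_1\in O(1)\times O(2)$, resp.\ $\{\mathrm{diag}(\pm1,\pm1,\pm1)\}$. These contain $-\mathrm{Id}$ and other orientation-reversing maps that are not of the form $\mathrm{Ad}_p$. Your way of discarding them is circular. Write $\iota(q)=q^{-1}$ and $c_p(q)=p^{-1}qp$. That $\iota\circ c_p$ is not an isometry only disqualifies that one map with differential $-\mathrm{Ad}_p$. Uniqueness from the $1$-jet compares two isometries, so it says nothing about some other isometry $\phi$ with that $1$-jet unless you already know $\phi$ is an (anti-)automorphism, which is the claim. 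Also, $\iota\circ L_a=R_{a^{-1}}\circ\iota$; it is not a right translation.

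Finally, Ochiai--Takahashi only controls $\I^0$. The lemma concerns the full group, including the $\Z_2$ and $(\Z_2)^2$ components and the absence of orientation-reversing isometries.

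The missing idea is to get the automorphism property from normality of $L(SU(2))$ in the full group $\I$, not from curvature.

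1. By Ochiai--Takahashi, the isotropy of $\I^0$ at $1$ lies in $\{c_p:\mathrm{Ad}_p\in O(\tilde g_1)\}$. This set is $1$-dimensional in the Berger case and finite in the generic case.
2. Since $\dim\I^0=3+\dim(\text{isotropy})$, a dimension count gives $\I^0=SU(2)_L U(1)_R\cong U(2)$, resp.\ $\I^0=SU(2)_L$.
3. Then $SU(2)_L$ equals $[\I^0,\I^0]$, resp.\ $\I^0$. It is therefore characteristic in $\I^0$, hence normal in $\I$.
4. If $\phi\in\I$ fixes $1$, normality gives $\phi L_a\phi^{-1}=L_{\alpha(a)}$ for some $\alpha\in\mathrm{Aut}(SU(2))$. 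Evaluating at $1$ gives $\phi=\alpha$.
5. Every automorphism of $SU(2)$ is inner, so $\phi=c_p$. This is an isometry iff $\mathrm{Ad}_p$ preserves $\mathrm{diag}(\lambda^2,\mu^2,\nu^2)$. That means $p\in\{e^{\mathbf{i}\theta}\}\cup\mathbf{j}\{e^{\mathbf{i}\theta}\}$ in the Berger case, and $p\in\{\pm1,\pm\mathbf{i},\pm\mathbf{j},\pm\mathbf{k}\}$ in the generic case.

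This yields exactly the listed generators and rules out orientation-reversing isometries automatically, with no curvature computation. The round case is just the classical $O(4)$.
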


Lemma \ref{lem:homog-metrics-S3} leads to a classification of homogeneous metrics on $\RP^3$:
\begin{corollary} \label{cor:homog-metrics-RP3}
    If $g$ is a homogeneous metric on $\RP^3$, then $g$ is isometric to the quotient of a homogeneous metric $\tilde{g}$ on $SU(2)$ by the map $q \mapsto -q$. We have $\I^0(\RP^3, g) = \I^0(SU(2), \tilde{g})/\Z_2$.
\end{corollary}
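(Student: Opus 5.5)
We lift the homogeneous metric $g$ on $\RP^3$ to its universal cover and then compare isometry groups via Lemma \ref{lem:quotient-isom}. Let $p: \Sph^3 \to \RP^3$ be the universal covering and $\tilde{g} = p^*g$. The deck group $\Z_2 = \{1,\tau\}$ acts freely and isometrically on $(\Sph^3,\tilde{g})$.

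\textbf{Step 1: $\tilde{g}$ is homogeneous.} Since $\RP^3$ is compact, $\I(\RP^3,g)$ is a compact Lie group acting transitively, so $\I^0(\RP^3,g)$ also acts transitively, because its orbits are open. Every isometry in $\I^0(\RP^3,g)$ lifts to an isometry of $(\Sph^3,\tilde{g})$. The lifts of elements of $\I^0(\RP^3,g)$ form a group $\tilde{G}$ that contains the deck group and surjects onto $\I^0(\RP^3,g)$. Given $x,y \in \Sph^3$, pick $\phi \in \I^0(\RP^3,g)$ with $\phi(p(x)) = p(y)$, and lift $\phi$ to some $\tilde{\phi}$. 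Then $\tilde{\phi}(x)$ is either $y$ or $\tau(y)$; in the latter case replace $\tilde{\phi}$ by $\tau\circ\tilde{\phi}$. Hence $\tilde{g}$ is homogeneous. By Lemma \ref{lem:homog-metrics-S3}, after an isometry we may take $\tilde{g}$ to be a left-invariant metric \eqref{eq:left-invariant} on $SU(2)$.

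\textbf{Step 2: identify $\tau$ with $q \mapsto -q$.} This is the main obstacle, because the isometry from Lemma \ref{lem:homog-metrics-S3} may move the deck transformation. After conjugating, $\tau$ is a fixed-point-free involutive isometry of $(SU(2),\tilde{g})$, and we use the explicit lists in Lemma \ref{lem:homog-metrics-S3}.
\begin{itemize}
\item \emph{Generic case.} Every isometry has the form $q \mapsto a\,c^{-1}qc$ with $c \in \{1,\mathbf{i},\mathbf{j},\mathbf{k}\}$. If $c = 1$, then $\tau$ is left multiplication by some $a$ with $a^2 = 1$ and $a \ne 1$, so $a = -1$. If $c \ne 1$, say $c = \mathbf{i}$, then $q \mapsto a\mathbf{i}^{-1}q\mathbf{i}$ is conjugate under a left translation to $q \mapsto b\,\mathbf{i}^{-1}q\mathbf{i}$ with any $b$ in the same conjugacy class. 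Squaring to the identity forces $b$ to commute with $\mathbf{i}$ and satisfy $b^2 = 1$-type constraints. One checks that such a map always has a fixed point, so this case cannot occur.
\item \emph{Berger and round cases.} Here we argue in the same way with $\tau(q) = aqb$, possibly composed with the extra component. A free involution in $SO(4)$ acting on $\Sph^3$ must be $-\mathrm{id}$: its eigenvalues lie in $\{\pm 1\}$, a $+1$ eigenvalue would give a fixed point, and so the determinant is $1$ and $\tau = -\mathrm{id}$. Orientation-reversing involutions in $O(4)$ always have a $+1$ eigenvalue, hence a fixed point, so they are excluded as well.
\end{itemize}
This eigenvalue argument gives a uniform treatment, since all three isometry groups are subgroups of $O(4)$ acting linearly on $\Sph^3 \subset \mathbb{H}$. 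In each case we conclude that $\tau(q) = -q$. Therefore $(\RP^3,g)$ is isometric to $(SU(2),\tilde{g})/\{\pm 1\}$.

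\textbf{Step 3: the isometry group.} The map $-1$ is central in $SU(2)$ and commutes with every element of the groups in Lemma \ref{lem:homog-metrics-S3}, so $N_{\I(SU(2),\tilde{g})}(\Z_2) = \I(SU(2),\tilde{g})$. Lemma \ref{lem:quotient-isom} then gives $\I(\RP^3,g) = \I(SU(2),\tilde{g})/\Z_2$. Taking identity components, which is valid because the quotient by a finite group is a local isomorphism, yields $\I^0(\RP^3,g) = \I^0(SU(2),\tilde{g})/\Z_2$.
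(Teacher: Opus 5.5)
Your argument is correct, and it reaches the key point (that the deck involution is $q \mapsto -q$) by a different and more elementary route than the paper.

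The paper works case by case inside the isometry groups. It uses Lemmas \ref{lem:CN-equality} and \ref{lem:quotient-isom} together with homogeneity of $g$ to get $\dim C^0_{\I(SU(2),\tilde{g})}(\iota(\Z_2)) = \dim \I^0(\RP^3,g) \geq 3$. This forces $\iota(\Z_2)$ into the centralizer of $SU(2)_L$. The paper then computes this centralizer ($\mathrm{Pin}(2)$ in the Berger case, $Z(SU(2))$ in the generic case) and observes that its only involution is $-1$. The round case is handled separately via de Rham's theorem on spherical space forms.

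You instead note that every generator in Lemma \ref{lem:homog-metrics-S3} is the restriction of a linear orthogonal map of $\mathbb{H} \cong \R^4$. These generators are left and right multiplication, $q \mapsto q^{-1} = \bar{q}$, and conjugation by $\mathbf{i}$ or $\mathbf{j}$. So $\tau$ is a symmetric orthogonal involution with eigenvalues $\pm 1$. Freeness rules out the eigenvalue $+1$, giving $\tau = -\mathrm{id}$.

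What your route buys: it treats all three cases at once, and it needs neither centralizer computations nor de Rham's theorem. It also uses homogeneity of $g$ only to make the lift left-invariant. The paper's route instead extracts the conclusion from the size of the centralizer, using homogeneity of the quotient directly, rather than from the explicit linear form of the full isometry group.

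Two small points:
\begin{itemize}
\item Your separate ``generic case'' bullet is vague (``one checks that such a map always has a fixed point''). It is entirely subsumed by the eigenvalue argument, so it can simply be dropped.
\item In Step 3, to write the identity component as $\I^0(SU(2),\tilde{g})/\Z_2$, you should note that $-\mathrm{id}$ is left multiplication by $-1 \in SU(2)_L \subset \I^0(SU(2),\tilde{g})$, so the $\Z_2$ lies in the identity component.
\end{itemize}
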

\begin{proof}
    Let $g$ be as in the corollary. Then the universal cover $(SU(2), \tilde{g})$ is homogeneous, and there is an embedding $\iota: \Z_2 \hookrightarrow \I(SU(2), \tilde{g})$ such that $\iota(\Z_2)$ acts freely and isometrically on $(SU(2), \tilde{g})$ with quotient isometric to $(\RP^3, g)$. Note that in each case (a)--(c) of Lemma \ref{lem:homog-metrics-S3}, the center of $\I^0(SU(2), \tilde{g})$ contains a unique copy of $\Z_2$ generated by the antipodal map $q \mapsto -q$. To prove the first part of the corollary, we need to show that $\iota(\Z_2)$ is exactly this central copy of $\Z_2$. We do this for cases (b) and (c); case (a) is more direct. The second part of the corollary follows from Lemma \ref{lem:quotient-isom}.

    (a) If $\tilde{g}$ is round, then so is $g$. Since diffeomorphic spherical space forms are isometric \cite{derham}, and a round $\RP^3$ can be obtained by quotienting $(SU(2),\tilde{g})$ by the map $q \mapsto -q$, it follows that $(\RP^3, g)$ is isometric to this quotient.

    (b) If $\tilde{g}$ is a Berger metric, then $\I(SU(2), \tilde{g}) \cong U(2) \rtimes \Z_2$. By Lemma \ref{lem:CN-equality} and Lemma \ref{lem:quotient-isom}, we have
    \begin{align} \label{eq:206}
        \dim C_{\I(SU(2), \tilde{g})}^0(\iota(\Z_2)) = \dim N_{\I(SU(2), \tilde{g})}^0(\iota(\Z_2)) = \dim \I^0(\RP^3, g) \geq 3,
    \end{align}
    where the last inequality holds because $g$ is homogeneous.
    We have $C_{\I(SU(2), \tilde{g})}^0(\iota(\Z_2)) \subset \I^0(SU(2),\tilde{g}) \cong U(2)$, so \eqref{eq:206} implies that $C_{\I(SU(2), \tilde{g})}^0(\iota(\Z_2))$ is either $SU(2)$ or $U(2)$. Either way, it follows that $\iota(\Z_2) \subset C_{\I(SU(2), \tilde{g})}(SU(2))$. This centralizer is computed to consist of right-multiplications by $e^{\mathbf{i}\theta}$ together with right-multiplications by $\mathbf{j}e^{\mathbf{i}\theta}$. Thus, $C_{\I(SU(2), \tilde{g})}(SU(2)) \cong \{ e^{\mathbf{i}\theta} \} \cup \mathbf{j}\{ e^{\mathbf{i}\theta} \} \cong \mathrm{Pin}(2)$.
    Since the unique order 2 subgroup of $\mathrm{Pin}(2)$ is contained in its identity component, it follows that $\iota(\Z_2) \subset Z(\I^0(SU(2),\tilde{g}))$, as required.

    (c) If $\tilde{g}$ is a generic left-invariant metric, then $\I(SU(2), \tilde{g}) \cong SU(2) \rtimes (\Z_2)^2$. The inequality \eqref{eq:206} still holds, but now it implies $C_{\I(SU(2), \tilde{g})}^0(\iota(\Z_2)) = SU(2)$. Thus,
    \begin{align}
        \iota(\Z_2) \subset C_{\I(SU(2),\tilde{g})}(SU(2)) = Z(SU(2)) = Z(\I^0(SU(2),\tilde{g})),
    \end{align}
    where the first equality is easily checked using Case (c) of Lemma \ref{lem:homog-metrics-S3}.
\end{proof}

The next lemma follows from Lemma \ref{lem:homog-metrics-S3} together with standard facts about subgroups of $SO(4)$ and $U(2)$.
\begin{lemma} \label{lem:subalg-classification-s3}
    Let $\tilde{g}$ be a homogeneous metric on $\Sph^3$. For each case below, we list all Lie subalgebras $\mathfrak{g}$ of $\mathfrak{I}(\Sph^3, \tilde{g})$ with $\dim\mathfrak{g} \geq 3$, up to conjugacy in $\I^0(\Sph^3,\tilde{g})$. For each $\mathfrak{g}$, we identify the unique connected Lie subgroup $G$ of $\I^0(\Sph^3, \tilde{g})$ with Lie algebra $\mathfrak{g}$. This gives a complete list of connected Lie subgroups $G \subseteq \I^0(\Sph^3, \tilde{g})$ with $\dim G \geq 3$, up to conjugacy. Finally, we determine whether $G$ acts transitively on $\Sph^3$; if so, we identify its isotropy subgroup $H \subseteq G$.

    \begin{enumerate}[label=(\alph*)]
        \item If $\tilde{g}$ is a round metric, i.e. $\I^0(\Sph^3, \tilde{g}) = SO(4) \cong SU(2)_L \times_{\Z_2} SU(2)_R$ and $\mathfrak{I}(\Sph^3,\tilde{g}) = \mathfrak{so}(4) \cong \mathfrak{su}(2)_L \oplus \mathfrak{su}(2)_R$, then the possibilities are shown in Table \ref{table:2}.

        \begin{table}[h]
            \centering
            \begin{threeparttable}
            \begin{tabular}{|c|c|c|}
            \hline
            $\mathfrak{g}$ & $G$ & $H$ (if $G$ transitive) \\
            \hline
            $\mathfrak{so}(4)$ & $SO(4)$ & $SO(3)$ \\
            $\mathfrak{su}(2)_L \oplus \mathfrak{u}(1)_R$ & $SU(2)_L \times_{\Z_2} U(1)_R \cong U(2)$ & $\mathrm{diag}(U(1))/\Z_2 \cong U(1)$ \\
            $\mathfrak{u}(1)_L \oplus \mathfrak{su}(2)_R$ & $U(1)_L \times_{\Z_2} SU(2)_R \cong U(2)$ & $\mathrm{diag}(U(1))/\Z_2 \cong U(1)$ \\
            $\mathfrak{su}(2)_L$ & $SU(2)_L$ & $\{1\}$ \\
            $\mathfrak{su}(2)_R$ & $SU(2)_R$ & $\{1\}$ \\
            $\mathrm{diag}(\mathfrak{su}(2))$ & $\mathrm{diag}(SU(2))/\Z_2 \cong SO(3)$ &
            $G$ not transitive\tnote{*} \\
            \hline
            \end{tabular}
            
            \begin{tablenotes}
            \footnotesize
            \item[*] In this case, $G$ acts by rotations of $\Sph^3 \subset \R^4$ that fix an axis.
            \end{tablenotes}
            \end{threeparttable}
            \caption{Possibilities for $\mathfrak{g}$, $G$, and $H$ if $\tilde{g}$ is a round metric.}
            \label{table:2}
        \end{table}
        \item If $\tilde{g}$ is a Berger metric, i.e. $\I^0(\Sph^3, \tilde{g}) = U(2)$ and $\mathfrak{I}(\Sph^3, \tilde{g}) = \mathfrak{u}(2)$, then the possibilities are shown in Table \ref{table:1}.
        \begin{table}[h]
            \centering
            \begin{tabular}{|c|c|c|}
                \hline
                $\mathfrak{g}$ & $G$ & $H$ (if $G$ transitive) \\
                \hline
                $\mathfrak{u}(2)$ & $U(2)$ & $U(1)$ \\
                $\mathfrak{su}(2)$ & $SU(2)$ & $\{1\}$ \\
                \hline
            \end{tabular}
            \caption{Possibilities for $\mathfrak{g}$, $G$, and $H$ if $\tilde{g}$ is a Berger metric.}
            \label{table:1}
        \end{table}
        \item If $\tilde{g}$ is a generic left-invariant metric, i.e. $\I^0(\Sph^3,\tilde{g}) = SU(2)$ and $\mathfrak{I}(\Sph^3, \tilde{g}) = \mathfrak{su}(2)$, then the only possibility is $\mathfrak{g} = \mathfrak{su}(2)$ and $G = SU(2)$. In this case, $G$ acts transitively on $\Sph^3$ with trivial isotropy subgroup $H$.
    \end{enumerate}
\end{lemma}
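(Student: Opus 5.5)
The plan is to treat the three cases separately. In each case the strategy is the same: classify Lie subalgebras of the compact Lie algebra $\mathfrak{I}(\Sph^3,\tilde g)$ of dimension at least $3$ up to conjugacy, pass to connected subgroups, and then check transitivity by computing one isotropy group. By Lemma \ref{lem:homog-metrics-S3}, the identity components are $SO(4)\cong SU(2)_L\times_{\Z_2}SU(2)_R$, $U(2)\cong SU(2)_L\times_{\Z_2}U(1)_R$, and $SU(2)_L$ respectively. Since each ambient group is compact, every subalgebra is compact, hence reductive: a direct sum of an abelian part and copies of $\mathfrak{su}(2)$.

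\textbf{Subalgebra classification.} Case (c) is immediate: a subalgebra of $\mathfrak{su}(2)$ of dimension at least $3$ is $\mathfrak{su}(2)$ itself.

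In case (b), a proper subalgebra $\mathfrak{g}\subset\mathfrak{u}(2)=\mathfrak{su}(2)\oplus\mathfrak{u}(1)$ of dimension $3$ must have a nonzero projection to $\mathfrak{su}(2)$. Its image there is a subalgebra of $\mathfrak{su}(2)$, so it has dimension $1$ or $3$.
\begin{itemize}
\item If the image has dimension $1$, then $\mathfrak{g}$ lies in the $2$-dimensional algebra $\mathfrak{u}(1)\oplus\mathfrak{u}(1)$, which contradicts $\dim\mathfrak{g}=3$.
\item If the image is all of $\mathfrak{su}(2)$, then $\mathfrak{g}$ is the graph of a homomorphism $\mathfrak{su}(2)\to\mathfrak{u}(1)$. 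Every such homomorphism is zero because $\mathfrak{su}(2)$ is perfect, so $\mathfrak{g}=\mathfrak{su}(2)$.
\end{itemize}

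In case (a), consider $\mathfrak{g}\subset\mathfrak{su}(2)_L\oplus\mathfrak{su}(2)_R$ and its projections $p_L,p_R$. Each projection has image of dimension $0$, $1$ or $3$, and I run through the possibilities as follows.
\begin{itemize}
\item If $\dim\mathfrak{g}\geq 4$, a dimension count forces both images to be $3$-dimensional. Then $\mathfrak{g}\cap\mathfrak{su}(2)_L$ is an ideal of $\mathfrak{su}(2)_L$, hence $0$ or everything. From this one gets $\mathfrak{g}=\mathfrak{so}(4)$, or $\mathfrak{g}$ contains one factor together with a subalgebra of the other factor. The latter gives $\mathfrak{su}(2)\oplus\mathfrak{u}(1)$ after conjugating the $\mathfrak{u}(1)$.
\item If $\dim\mathfrak{g}=3$ and $\mathfrak{g}$ is not a full factor, then by Goursat's lemma $\mathfrak{g}$ is the graph of an isomorphism $\mathfrak{su}(2)_L\to\mathfrak{su}(2)_R$. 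Every automorphism of $\mathfrak{su}(2)$ is inner, so after conjugating by an element of $SU(2)_R$ we may take $\mathfrak{g}=\mathrm{diag}(\mathfrak{su}(2))$.
\item An abelian $\mathfrak{g}$ has dimension at most $\mathrm{rank}\,\mathfrak{so}(4)=2$, so it does not occur.
\end{itemize}

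\textbf{Connected subgroups.} The corresponding connected subgroups are the images in the quotient by $\Z_2=\{\pm(1,1)\}$. This gives $SO(4)$, $U(2)$, $SU(2)_L$, $SU(2)_R$, and $\mathrm{diag}(SU(2))/\Z_2\cong SO(3)$, since $(-1,-1)$ lies in the diagonal.

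\textbf{Transitivity and isotropy.} The action is $(a,b)\cdot q=aqb^{-1}$.
\begin{itemize}
\item $SU(2)_L$ and $SU(2)_R$ act simply transitively.
\item $SO(4)$ is transitive with isotropy $SO(3)$.
\item $SU(2)_L\times_{\Z_2}U(1)_R$ is transitive because it contains $SU(2)_L$. The stabilizer of $1$ consists of pairs $(a,b)$ with $a=b\in U(1)$, i.e. $\mathrm{diag}(U(1))/\Z_2\cong U(1)$. The same computation with $L$ and $R$ swapped handles $U(1)_L\times_{\Z_2}SU(2)_R$, and likewise $U(2)$ in case (b).
\item The diagonal group acts by conjugation $q\mapsto aqa^{-1}$. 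This fixes the real axis and rotates the imaginary $\Sph^2$, so it is not transitive.
\end{itemize}

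The main obstacle is the completeness of the case (a) list up to conjugacy, and in particular the Goursat-type step for $3$-dimensional subalgebras. I would handle it by the ideal argument above together with the fact that all automorphisms of $\mathfrak{su}(2)$ are inner, and all maximal tori are conjugate.
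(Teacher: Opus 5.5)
Your route is the same as the paper's. The paper gives no argument beyond invoking Lemma~\ref{lem:homog-metrics-S3} and ``standard facts about subgroups of $SO(4)$ and $U(2)$'', and you are spelling those facts out. Your final list, the passage to connected subgroups via the quotient by $\{\pm(1,1)\}$, and the isotropy and transitivity computations are all correct.

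There is, however, one false step in case (a). For $\dim\mathfrak{g}\geq 4$, a dimension count does \emph{not} force both projections to be $3$-dimensional. Indeed $\mathfrak{su}(2)_L\oplus\mathfrak{u}(1)_R$ itself has $p_R$-image $\mathfrak{u}(1)_R$. Moreover, if both images were full, your ideal argument would yield only $\mathfrak{so}(4)$, which contradicts your own conclusion that $\mathfrak{su}(2)\oplus\mathfrak{u}(1)$ occurs. The correct count is as follows.
\begin{enumerate}
\item The image dimensions lie in $\{0,1,3\}$ and sum to at least $\dim\mathfrak{g}\geq 4$. So at least one image, say $p_L(\mathfrak{g})$, is all of $\mathfrak{su}(2)_L$.
\item Then $\mathfrak{g}\cap\mathfrak{su}(2)_L=\ker(p_R|_{\mathfrak{g}})$ is an ideal of $p_L(\mathfrak{g})=\mathfrak{su}(2)_L$. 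It cannot be $0$, since then $p_R|_{\mathfrak{g}}$ would be injective and $\dim\mathfrak{g}\leq 3$.
\item Hence $\mathfrak{g}\supseteq\mathfrak{su}(2)_L$, and $\mathfrak{g}=\mathfrak{su}(2)_L\oplus(\mathfrak{g}\cap\mathfrak{su}(2)_R)$. The second summand is a subalgebra of $\mathfrak{su}(2)_R$ of dimension $\dim\mathfrak{g}-3\geq 1$, hence of dimension $1$ or $3$.
\end{enumerate}
This gives $\mathfrak{so}(4)$ or, after conjugating by $SU(2)_R$, $\mathfrak{su}(2)_L\oplus\mathfrak{u}(1)_R$. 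The symmetric case gives $\mathfrak{u}(1)_L\oplus\mathfrak{su}(2)_R$. It also shows explicitly that no $5$-dimensional subalgebra exists, which your write-up leaves implicit.

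Your $\dim\mathfrak{g}=3$ step via Goursat is fine. Simplicity of $\mathfrak{su}(2)$ rules out projections of type $(3,1)$ and $(1,3)$, leaving only full factors or the graph of an isomorphism. With the repair above, the proof is complete.
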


\begin{corollary} \label{cor:subalg-classification-rp3}
    Using Corollary \ref{cor:homog-metrics-RP3}, we immediately obtain an analog of Lemma \ref{lem:subalg-classification-s3} for homogeneous metrics $g$ on $\RP^3$:
    \begin{enumerate}[label=(\alph*)]
        \item In each case for $g$ (round, Berger, generic left-invariant), we can classify all Lie subalgebras $\mathfrak{g}$ of $\mathfrak{I}(\RP^3, g)$ with $\dim\mathfrak{g} \geq 3$ up to conjugacy in $\I^0(\RP^3,g)$. The list is exactly the same as that given in Lemma \ref{lem:subalg-classification-s3}.
        \item For each $\mathfrak{g}$, we can identify the unique connected Lie subgroup $G \subset \I^0(\RP^3, g)$. These are simply the $G$'s from Lemma \ref{lem:subalg-classification-s3} quotiented by central $\Z_2$'s. This gives a complete list of connected Lie subgroups $G \subseteq \I^0(\RP^3, g)$ with $\dim G \geq 3$, up to conjugacy.
        \item All $G$'s act transitively on $\RP^3$, except for $G$ coming from the last row of Table \ref{table:2}. For the transitive $G$'s, the isotropy subgroup $H$ is the image of the corresponding isotropy subgroup from Lemma \ref{lem:subalg-classification-s3} under the quotient map $G \to G/\Z_2$ from the previous point.
    \end{enumerate}
\end{corollary}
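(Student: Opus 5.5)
The plan is to transport everything through the double cover $p: SU(2) \to \RP^3$, $q \mapsto \pm q$, using Corollary \ref{cor:homog-metrics-RP3}. That corollary lets us write $g = \tilde{g}/\Z_2$, where $\tilde{g}$ is a homogeneous (left-invariant) metric on $SU(2)$ and the $\Z_2$ is generated by the central antipodal map $q \mapsto -q$ in $\I^0(SU(2),\tilde g)$. It also gives $\I^0(\RP^3, g) = \I^0(SU(2),\tilde g)/\Z_2$.

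For part (a), note that the quotient map $\I^0(SU(2),\tilde g) \to \I^0(\RP^3,g)$ is a covering homomorphism with finite kernel. Its differential is therefore a Lie algebra isomorphism $\mathfrak{I}(SU(2),\tilde g) \cong \mathfrak{I}(\RP^3,g)$. Concretely, Killing fields of $\tilde g$ commute with the antipodal map, which is central, so they descend; conversely, Killing fields of $g$ lift to Killing fields of $\tilde g$. Conjugacy by $\I^0(SU(2),\tilde g)$ and conjugacy by its image $\I^0(\RP^3,g)$ coincide on the Lie algebra, because the kernel $\Z_2$ is central and so acts trivially under $\mathrm{Ad}$. Hence the classification of subalgebras of dimension $\geq 3$ up to conjugacy carries over verbatim from Lemma \ref{lem:subalg-classification-s3}, in all three cases.

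For part (b), let $\tilde G \subseteq \I^0(SU(2),\tilde g)$ be the connected subgroup with Lie algebra $\mathfrak g$. Its image $p(\tilde G) = \tilde G/(\tilde G \cap \Z_2)$ is connected with Lie algebra $\mathfrak g$, so by uniqueness it is the $G$ in $\I^0(\RP^3,g)$. It remains to check, row by row in Tables \ref{table:2} and \ref{table:1} and in case (c), whether $-1 \in \tilde G$. The element $-1$ equals left multiplication by $-1 \in SU(2)_L$, which is also right multiplication by $-1 \in SU(2)_R$ or $U(1)_R$. Thus every listed $\tilde G$ contains it, including $SO(4)$, the two $U(2)$'s, $SU(2)_L$, $SU(2)_R$ and $SU(2)$. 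The exception is the diagonal $SO(3) = \mathrm{diag}(SU(2))/\Z_2$. This group acts by conjugation $q \mapsto aqa^{-1}$ and does not contain $-1$, so there the image is isomorphic to $SO(3)$ itself. I would state this carefully, since the wording ``quotiented by central $\Z_2$'s'' must be read as quotienting by $\tilde G \cap \Z_2$. This bookkeeping is the only real point to verify.

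For part (c), use that $p$ is $\tilde G$-equivariant. Transitivity of $\tilde G$ on $SU(2)$ then implies transitivity of $G$ on $\RP^3$. For the diagonal $SO(3)$, the orbits are the level sets of the real part of $q$, and these descend to level sets of $|\mathrm{Re}\, q|$. The orbits therefore have dimension at most $2$ and the action is not transitive. For a transitive $\tilde G$ containing $-1$, the isotropy of $[1] \in \RP^3$ is $\{\gamma \in \tilde G : \gamma(1) = \pm 1\} = \tilde H \cdot \{1, -1\}$. Its image in $G = \tilde G/\Z_2$ is $\tilde H \Z_2/\Z_2 \cong p(\tilde H)$, which is exactly the claimed statement.
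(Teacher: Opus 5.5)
Your proposal is correct and is the argument the paper leaves implicit when it says the result follows ``immediately'' from Corollary \ref{cor:homog-metrics-RP3}: pass through the double cover, use that the antipodal map is central in $\I^0(SU(2),\tilde g)$ so that Lie algebras and conjugacy classes correspond, and check whether $-1$ lies in each $\tilde G$. Your observation that the diagonal $SO(3)$ does not contain the antipodal map is correct, and it is a genuine refinement of the paper's wording: for that row ``quotiented by central $\Z_2$'s'' must be read as quotienting by $\tilde G\cap\Z_2$, so the image there is still $SO(3)$.
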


\begin{corollary} \label{cor:RP3-subgroups-transitive}
    Let $g$ be a homogeneous metric on $\RP^3$ with $\dim \I(\RP^3, g) \geq 4$. Then any Lie subgroup $G \subseteq \I^0(\RP^3, g)$ with $\dim G \geq 4$ must act transitively.
\end{corollary}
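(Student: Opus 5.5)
The plan is to read the result off from Corollary \ref{cor:subalg-classification-rp3}, using the tables in Lemma \ref{lem:subalg-classification-s3}, after first pinning down what the isometry group can be. Since $g$ is homogeneous on $\RP^3$, Corollary \ref{cor:homog-metrics-RP3} says $g$ is the quotient of a homogeneous metric $\tilde{g}$ on $SU(2)$ by the central antipodal map. It also gives $\I^0(\RP^3,g) = \I^0(SU(2),\tilde{g})/\Z_2$, which has the same dimension as $\I^0(SU(2),\tilde g)$. The hypothesis $\dim \I(\RP^3,g) \geq 4$ therefore excludes the generic left-invariant case, where the dimension is $3$. So $\tilde g$ is either round, with $\I^0 \cong SO(4)$ of dimension $6$, or Berger, with $\I^0 \cong U(2)$ of dimension $4$.

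Next I take a connected Lie subgroup $G \subseteq \I^0(\RP^3,g)$ with $\dim G \geq 4$. By Corollary \ref{cor:subalg-classification-rp3}(a)--(b), up to conjugacy its Lie algebra $\mathfrak g$ appears in the list of Lemma \ref{lem:subalg-classification-s3} for the corresponding case. Conjugation does not affect transitivity, since a conjugate of a transitive group is transitive. I then check the two cases.

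\emph{Berger case.} The only entry in Table \ref{table:1} of dimension $\geq 4$ is $\mathfrak u(2)$, so $G = U(2)/\Z_2$. This is transitive by Corollary \ref{cor:subalg-classification-rp3}(c).

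\emph{Round case.} The entries of Table \ref{table:2} of dimension $\geq 4$ are $\mathfrak{so}(4)$ (dimension $6$), $\mathfrak{su}(2)_L\oplus\mathfrak u(1)_R$ and $\mathfrak u(1)_L \oplus \mathfrak{su}(2)_R$ (dimension $4$). The only non-transitive entry, $\mathrm{diag}(\mathfrak{su}(2))$, has dimension $3$ and is therefore excluded. By Corollary \ref{cor:subalg-classification-rp3}(c), all remaining $G$ act transitively.

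The only point needing care is the completeness of the tables, i.e.\ that $\mathfrak{so}(4)$ and $\mathfrak u(2)$ have no other subalgebras of dimension $\geq 4$. That is exactly what Lemma \ref{lem:subalg-classification-s3} asserts. If one wants to double-check it: a proper subalgebra of $\mathfrak{su}(2)\oplus\mathfrak{su}(2)$ of dimension $\geq 4$ projects onto each factor with image $0$, $\mathfrak u(1)$ or $\mathfrak{su}(2)$, and counting dimensions forces one of the listed $4$-dimensional forms. In $\mathfrak u(2)$, the only subalgebra of dimension $4$ is $\mathfrak u(2)$ itself. So no genuine obstacle is expected.
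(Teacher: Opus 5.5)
Your proposal is correct and follows essentially the same route as the paper: reduce via Corollary \ref{cor:homog-metrics-RP3} to the round and Berger cases, then read off the subalgebras of dimension $\geq 4$ from Corollary \ref{cor:subalg-classification-rp3} and check that each is transitive. The paper checks the $\mathfrak{su}(2)_L\oplus\mathfrak{u}(1)_R$ and $\mathfrak{u}(1)_L\oplus\mathfrak{su}(2)_R$ cases directly by noting they contain the left or right multiplications, where you cite part (c); and your restriction to connected $G$ is harmless, since $G^0$ has the same dimension and $G^0$ transitive implies $G$ transitive.
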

\begin{proof}
    By Corollary \ref{cor:homog-metrics-RP3}, $g$ must be the standard $\Z_2$-quotient of a homogeneous metric $\tilde{g}$ on $\Sph^3$. Since $\dim \I(\RP^3, g) \geq 4$, we have $\dim \I(\Sph^3, \tilde{g}) \geq 4$ as well. Thus, by Lemma \ref{lem:homog-metrics-S3}, $\tilde{g}$ is either a round metric or a Berger metric.
    \begin{itemize}
        \item If $\tilde{g}$ is round, then $\I^0(\RP^3, g) = SO(4)/\Z_2$. By Corollary \ref{cor:subalg-classification-rp3}, any Lie subalgebra of $\mathfrak{I}(\RP^3, g) = \mathfrak{so}(4)$ with dimension at least 4 must be $\mathfrak{so}(4)$, $\mathfrak{su}(2)_L \oplus \mathfrak{u}(1)_R$, or $\mathfrak{u}(1)_L \oplus \mathfrak{su}(2)_R$. In the first case, the Killing fields clearly span the tangent space everywhere, so $G$ acts transitively. In the second (resp. third) case, the corresponding subalgebra of Killing fields contains those generating left (resp. right) multiplication in $SU(2)$, so they also span the tangent space at every point.
        \item If $\tilde{g}$ is a Berger metric, then $\I^0(\RP^3, g) = U(2)/\Z_2$, which has dimension 4. Thus, $G = \I^0(\RP^3, g)$, which acts transitively.
    \end{itemize}
\end{proof}

\begin{corollary} \label{cor:closed-subgroup-rp3}
    If $g$ is a metric on $\RP^3$ with $\dim \I(\RP^3, g) \geq 3$, then every connected Lie subgroup $G$ of $\I^0(\RP^3, g)$ with dimension $\geq 3$ is compact.
\end{corollary}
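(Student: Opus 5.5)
The plan is to reduce to the homogeneous case and then read compactness off the explicit subgroup lists already established. Since $\RP^3$ is compact, $\I(\RP^3,g)$ is a compact Lie group by Myers--Steenrod, so $\I^0(\RP^3,g)$ is a compact connected Lie group. A connected Lie subgroup $G \subseteq \I^0(\RP^3,g)$ is therefore compact if and only if it is closed, and it is closedness that I will prove.

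\emph{Case $\dim \I(\RP^3,g) = 3$.} Then a connected $G$ with $\dim G \geq 3$ must equal $\I^0(\RP^3,g)$, which is compact.

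\emph{Case $\dim \I(\RP^3,g) \geq 4$.} First I show that $g$ is homogeneous. Fix $p \in \RP^3$ and let $K_p \subseteq \I^0(\RP^3,g)$ be its isotropy subgroup.
\begin{enumerate}
    \item Since $\RP^3$ is connected, the isotropy representation $K_p \to O(T_p\RP^3) \cong O(3)$ is faithful, so $\dim K_p \leq 3$.
    \item $SO(3)$ has no closed subgroup of dimension $2$, so $\dim K_p \in \{0,1,3\}$.
    \item If $\dim K_p \leq 1$, the orbit $\I^0 \cdot p$ has dimension $\geq 4-1 = 3$. So it is open and also compact, hence equal to $\RP^3$.
    \item If $\dim K_p = 3$, then $K_p^0$ acts on $T_p\RP^3$ as $SO(3)$. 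The tangent space $T_p(\I^0\cdot p)$ is a $K_p^0$-invariant subspace of $T_p\RP^3$, so it is $0$ or all of $T_p\RP^3$. It cannot be $0$, because $\dim(\I^0\cdot p) = \dim \I^0 - 3 \geq 1$. So again the orbit is open, hence all of $\RP^3$.
\end{enumerate}
Thus $g$ is homogeneous.

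With homogeneity in hand, Corollary~\ref{cor:homog-metrics-RP3} and Corollary~\ref{cor:subalg-classification-rp3}(b) apply. Up to conjugacy in $\I^0(\RP^3,g)$, every connected Lie subgroup of dimension $\geq 3$ is the image modulo the central $\Z_2$ of one of the groups in the tables of Lemma~\ref{lem:subalg-classification-s3}. These are $SO(4)$, $U(2)$ (in its various embeddings), $SU(2)_L$, $SU(2)_R$, $\mathrm{diag}(SU(2))/\Z_2$, and in the Berger and generic cases $U(2)$ and $SU(2)$. Each of these is a compact group: the images of $SU(2)$ and $U(1)$ under continuous homomorphisms are compact, and products of compact subgroups are compact. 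Each is also embedded as a closed subgroup. Conjugation is a homeomorphism of $\I^0$, so $G$ is closed, hence compact.

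The main point needing care is that Lemma~\ref{lem:subalg-classification-s3} classifies \emph{Lie subalgebras} up to conjugacy, and the corollary then identifies $G$ as \emph{the} connected subgroup with that Lie algebra. So I must check that for each listed subalgebra, the connected subgroup it generates really is the listed compact group and not a dense immersed subgroup. The worry would be an irrational-slope torus, which could only arise from an abelian factor. In every listed subalgebra the only abelian summand is the single $\mathfrak{u}(1)$, which generates a closed circle. Hence each generated subgroup is a product of closed subgroups under a finite quotient, and so is closed.
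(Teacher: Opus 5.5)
Your proposal is correct and follows the same route as the paper. The case $\dim \I(\RP^3,g)=3$ is trivial, $\dim \I(\RP^3,g)\geq 4$ forces homogeneity, and compactness is then read off from the classification in Corollary~\ref{cor:subalg-classification-rp3}(b); the only differences are that you prove homogeneity directly via the isotropy representation where the paper cites Kobayashi, and you spell out why each listed subgroup is closed where the paper just says it is clear.
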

\begin{proof}
    If $\dim \I(\RP^3,g) = 3$ this is trivial, so we assume $\dim \I(\RP^3, g) \geq 4$. It is well-known that this implies $g$ is homogeneous \cite{kobayashi-groups}*{p. 48--49}. Corollary \ref{cor:subalg-classification-rp3}(b) classifies all connected Lie subgroups $G$ of $\I^0(\RP^3, g)$ with $\dim G \geq 3$. It is clear that each $G$ is compact.
\end{proof}

\subsection{Proof of Theorem \ref{thm:EH-characterization}: dividing into cases} \label{subsec:case-split}

For the rest of \S\ref{sec:eh-result}, $(M^4,g)$ is a complete 4-dimensional Ricci-flat ALE space with finitely many orbifold points and group at infinity $\Z_2$. Moreover, we assume that $\dim \ker_{L^2}(\Delta_L) \leq 3$.
We now proceed to prove Theorem \ref{thm:EH-characterization}.

By Proposition \ref{prop:exact-KFs} and the setup before Corollary \ref{cor:g-jacobi}, there is a Lie subalgebra $\mathfrak{g}$ of Killing fields on $(M,g)$ with $\dim\mathfrak{g} \geq 6-3=3$, which for large $\rho$ restricts to a Lie algebra $\mathfrak{g}_\rho$ of Killing fields on the CMC leaves $(\RP^3, g_\rho)$. These Lie algebras integrate to connected Lie groups $G \subseteq \I^0(M,g)$ and $G_\rho \subseteq \I^0(\RP^3, g_\rho)$ of dimension $\geq 3$.

By Corollary \ref{cor:closed-subgroup-rp3}, $G_\rho$ is compact. By Lemma \ref{lem:level-set-isos}, $G$ acts smoothly, isometrically, and effectively on $(M,g)$, and the restriction map $G \to G_\rho$ is an isomorphism. In particular, $G_\rho$ for all large $\rho$ (say $\rho \geq R_0$) are isomorphic. Consider two cases:
\begin{enumerate}[label=\textbf{(\Alph*)}]
    \item $G$ acts transitively on $(\RP^3, g_\rho)$ for some $\rho \geq R_0$.
    \item $G$ does not act transitively on $(\RP^3, g_\rho)$ for all $\rho \geq R_0$.
\end{enumerate}
In \S\ref{subsec:caseA}, we will show that Case (A) implies $(M,g)$ is either the Eguchi--Hanson space or the flat orbifold $\R^4/\Z_2$. In \S\ref{subsec:caseB}, we will show that Case (B) necessarily leads to $(M,g)$ being the flat $\R^4/\Z_2$. Altogether, this will prove Theorem \ref{thm:EH-characterization}.

\subsection{Case (A)} \label{subsec:caseA}

In this subsection, we assume $G$ acts transitively on $(\RP^3, g_\rho)$, so $g_\rho$ is homogeneous. We will show that $(M,g)$ is either Eguchi--Hanson space or the flat $\R^4/\Z_2$.

Corollary \ref{cor:homog-metrics-RP3} determines all possibilities for $g_\rho$ and $\I^0(\RP^3, g_\rho)$. Based on this, Corollary \ref{cor:subalg-classification-rp3} allows us to classify $G$ (up to isomorphism) and the isotropy subgroup $H$. Table \ref{table:3} lists all possibilities.
\begin{table}[h]
    \centering
    \begin{tabular}{|c|c|c|c|}
        \hline
        $g_\rho$ & $\I^0(\RP^3, g_\rho)$ & $G$ & $H$ \\
        \hline
        \multirow{3}{*}{Round} &
        \multirow{3}{*}{$SO(4)/\Z_2$} &
        $SO(4)/\Z_2$ & $SO(3)$ \\
        \cline{3-4}
         & & $U(2)/\Z_2$ & $U(1)$ \\
        \cline{3-4}
         & & $SU(2)/\Z_2$ & $\{1\}$ \\
        \hline
        \multirow{2}{*}{Berger} &
        \multirow{2}{*}{$U(2)/\Z_2$} &
        $U(2)/\Z_2$ & $U(1)$ \\
        \cline{3-4}
         & & $SU(2)/\Z_2$ & $\{1\}$ \\
        \hline
        Generic left-invariant & $SU(2)/\Z_2$ & $SU(2)/\Z_2$ & $\{1\}$ \\
        \hline
    \end{tabular}
    \caption{All possibilities for $g_\rho$, $\I^0(\RP^3, g_\rho)$, $G$, $H$ in Case (A).}
    \label{table:3}
\end{table}

By assumption, there exists a pair $(G,H)$ from Table \ref{table:3} such that $G$ acts smoothly, effectively, and isometrically on $(M,g)$ with cohomogeneity one and principal isotropy $H$. To simplify things, we will modify $G$ and $H$ by using their preimages under the $\Z_2$-quotients. Thus, for some pair $(G,H)$ in Table \ref{table:4}, $G$ acts smoothly, \emph{almost} effectively, and isometrically with cohomogeneity one on $(M,g)$ and principal isotropy $H$.
\begin{table}[h]
    \centering
    \begin{tabular}{|c|c|}
        \hline
        $G$ & $H$ \\
        \hline
        $SO(4)$ & $O(3)$ \\
        $U(2)$ & $U(1) \times \Z_2$ \\
        $SU(2)$ & $\Z_2$ \\
        \hline
    \end{tabular}
    \caption{In Case (A), some $G$ here acts smoothly, almost effectively, and isometrically with cohomogeneity one on $(M,g)$ and principal isotropy $H$.}
    \label{table:4}
\end{table}

By Proposition \ref{prop:coho-1-structure}, it holds that for some $(G,H)$ from Table \ref{table:4},
\begin{enumerate}[label=(\roman*)]
    \item Outside a submanifold $X$, we can identify
    \begin{align}
        (M \setminus X, g) = \left( (0,\infty) \times G/H, ds^2 + g(s) \right)
    \end{align}
    where $\{g(s)\}_{s>0}$ are $G$-invariant metrics on a fixed copy of $G/H \cong \RP^3$.
    \item There is a closed subgroup $K$ of $G$ strictly containing $H$ such that $X$ is diffeomorphic to $G/K$, and $K/H$ is diffeomorphic to a spherical space form of dimension $\ell \geq 0$.
    \item From the description of $(M,g)$ as a bundle over $X$, the metric spaces $(G/H, g(s))$ converge in the Hausdorff sense to $X$ as $s \to 0$.
\end{enumerate}

We will use (ii) and (iii) with the following lemma, which follows from standard facts about closed subgroups of $SO(4)$, $U(2)$, and $SU(2)$. Below, $L(4,1)$ is the lens space obtained by quotienting $\Sph^3$ by a Hopf action of $\Z_4$.
\begin{lemma} \label{lem:G/K-possibilities}
    For each $(G,H)$ in Table \ref{table:4}, we indicate in Table \ref{table:5} all closed subgroups $K$ of $G$ such that (i) $K$ strictly contains $H$, and (ii) $K/H$ is diffeomorphic to a spherical space form of dimension $\ell \geq 0$. We also indicate the diffeomorphism type of $G/K$.
    \begin{table}[h]
        \centering
        \begin{tabular}{|c|c|c|c|}
            \hline
            $G$ & $H$ & $K$ & $G/K$ \\
            \hline
            $SO(4)$ & $O(3)$ & $SO(4)$ & $\{\text{point}\}$ \\
            \hline
            \multirow{3}{*}{$U(2)$} &
            \multirow{3}{*}{$U(1)\times \Z_2$} &
            $U(1)\times \Z_4$ & $L(4,1)$ \\
            \cline{3-4}
             & & $U(1)\times U(1)$ & $\Sph^2$ \\
            \cline{3-4}
            & & $U(2)$ & $\{\text{point}\}$ \\
            \hline
            \multirow{3}{*}{$SU(2)$} &
            \multirow{3}{*}{$\Z_2$} &
            $\Z_4$ & $L(4,1)$ \\
            \cline{3-4}
             & & $U(1)$ & $\Sph^2$ \\
            \cline{3-4}
             & & $SU(2)$ & $\{\text{point}\}$ \\
            \hline
        \end{tabular}
        \caption{Results for Lemma \ref{lem:G/K-possibilities}.}
        \label{table:5}
    \end{table}
\end{lemma}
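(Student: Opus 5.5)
The plan is a case-by-case enumeration over the three rows of Table \ref{table:4}, organized by $\dim K$. In every row, $\ell = \dim K - \dim H$. If $\ell = 0$, then $K/H$ is a $0$-dimensional spherical space form, i.e. $\Sph^0$, so $H$ has index $2$ in $K$. If $\ell \geq 1$, then $K/H$ is connected, so $K = K^0 H$ and $K/H = K^0/(K^0\cap H)$. Since $H \subseteq K$, we also have $H^0 \subseteq K^0$. Once each admissible $K$ is found, $G/K$ is identified directly: a quotient of $SU(2)$ by a cyclic group of order $4$ acting by left/right multiplication is $L(4,1)$; a quotient by a maximal torus is $\Sph^2$; and $G/G$ is a point.

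\emph{The case $G = SU(2)$, $H = \Z_2 = \{\pm 1\}$.} Closed subgroups of $SU(2)$ have dimension $0$, $1$, or $3$.
\begin{itemize}
\item If $\dim K = 0$, then $K$ is finite with $[K:H]=2$, so $|K|=4$. The only subgroups of order $4$ in $SU(2)$ are cyclic, because $-1$ is the unique involution; hence $K \cong \Z_4$, conjugate to $\langle \mathbf{i}\rangle$, and $SU(2)/\Z_4 = L(4,1)$.
\item If $\dim K = 1$, then $K^0$ is a maximal torus $U(1)$, which already contains $-1$, and $K \subseteq N(U(1)) = \mathrm{Pin}(2)$. If $K = \mathrm{Pin}(2)$, then $K/H$ has two circle components and is not a space form, so $K = U(1)$ and $G/K = \Sph^2$.
\item If $\dim K = 3$, then $K = SU(2)$, $K/H = \RP^3$, and $G/K$ is a point.
\end{itemize}

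\emph{The case $G = U(2)$, $H = U(1)\times\Z_2$.} I will use the explicit description behind Table \ref{table:3}. Here $H^0$ is the preimage of the diagonal circle in $SU(2)_L\times_{\Z_2}U(1)_R$, which is a non-central circle not contained in $SU(2)$, and the $\Z_2$ factor is the central $-1$.
\begin{itemize}
\item If $\dim K = 1$, then $K^0 = H^0$ and $[K:H]=2$, with $K \subseteq N(H^0)$. The candidates are $H^0$ times a cyclic central group of order $4$, or an extension of $H$ by a Weyl-group element. I expect the latter to be excluded: a Weyl element normalizes the maximal torus containing $H^0$ but does not preserve $H^0$ itself, since it swaps the eigenvalues. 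This leaves $K = U(1)\times\Z_4$, and then $G/K = SU(2)/\Z_4 = L(4,1)$.
\item If $\dim K = 2$, then $K^0$ is the maximal torus $T^2 \supseteq H$. Connectedness of $K/H$ rules out adding a Weyl element, so $K = U(1)\times U(1)$ and $G/K = \Sph^2$.
\item If $\dim K = 3$, then $K^0 = SU(2)$ (the only $3$-dimensional connected subgroup), and $H^0 \not\subseteq SU(2)$ gives a contradiction.
\item If $\dim K = 4$, then $K = U(2)$, $K/H \cong \RP^3$, and $G/K$ is a point.
\end{itemize}

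\emph{The case $G = SO(4)$, $H = O(3)$.} Here $O(3)$ is embedded as $S(O(3)\times O(1))$.
\begin{itemize}
\item If $\dim K = 3$, then $K$ would be an index-$2$ overgroup of $O(3)$, contained in $N_{SO(4)}(SO(3))$. That normalizer is $O(3)$ itself, so no such $K$ exists.
\item There is no $4$- or $5$-dimensional subgroup of $SO(4)$ containing $SO(3)$. The $4$-dimensional ones are conjugates of $U(2)$, whose semisimple part acts freely on $\Sph^3$, and there are none of dimension $5$.
\item If $\dim K = 6$, then $K = SO(4)$, $K/H = \RP^3$, and $G/K$ is a point.
\end{itemize}

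The main obstacle is the $U(2)$ row. There I must pin down the embedding of $H$ precisely and then verify the normalizer computations: that no Weyl element preserves $H$, and that $H^0 \not\subseteq SU(2)$. Everything else is standard subgroup theory of $SU(2)$ and $SO(4)$.
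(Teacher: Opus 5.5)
Your proposal is correct and takes the route the paper intends. The paper offers no argument beyond appealing to standard facts about closed subgroups of $SO(4)$, $U(2)$, and $SU(2)$, and your dimension-by-dimension enumeration (index-two overgroups when $\ell=0$, $K=K^0H$ when $\ell\geq 1$, together with the normalizer computations for $SO(3)\subset SO(4)$ and for the non-central circle $H^0=\{\mathrm{diag}(1,w)\}\subset U(2)$) carries out exactly that check. The step you hedge on is sound: $N_{U(2)}(H^0)\subseteq N_{U(2)}(C_{U(2)}(H^0))$, where $C_{U(2)}(H^0)$ is the diagonal torus $T$, and a Weyl element sends $\mathrm{diag}(1,w)$ to $\mathrm{diag}(w,1)\notin H^0$, so $N_{U(2)}(H^0)=T$ and $K=H^0\cdot\langle i\,\mathrm{Id}\rangle$ is the only index-two overgroup.
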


Since the metrics $g(s)$ are homogeneous, Corollary \ref{cor:homog-metrics-RP3} implies that $g(s)$ is the $\Z_2$-quotient of a metric of the form
\begin{align} \label{eq:bianchiIX}
    \tilde{g}(s) = ds^2 + \lambda(s)^2 \sigma_1(s) \otimes \sigma_1(s) + \mu(s)^2 \sigma_2(s) \otimes \sigma_2(s) + \nu(s)^2 \sigma_3(s) \otimes \sigma_3(s)
\end{align}
on $(0,\infty) \times SU(2)$, where $\lambda,\mu,\nu$ are smooth positive functions, and for each $s > 0$, $\{\sigma_1(s),\sigma_2(s),\sigma_3(s)\}$ is a left-invariant coframe on $SU(2)$. It is well-known that Ricci-flatness leads to diagonalizability of such metrics, i.e. $\sigma_i(s)$ can be chosen independently of $s$ (see e.g. \cite{dammermann}*{\S 3} for a proof). Thus,
\begin{align}
    g(s) = \lambda(s)^2 \sigma_1 \otimes \sigma_1 + \mu(s)^2 \sigma_2 \otimes \sigma_2 + \nu(s)^2 \sigma_3 \otimes \sigma_3.
\end{align}
From this, we see that any Hausdorff limit of $(\RP^3, g(s))$ as $s \to 0$ is homeomorphic to either $\RP^3$ or a space of lower dimension. This rules out $G/K \cong L(4,1)$, so Lemma \ref{lem:G/K-possibilities} leaves us with two possibilities: either $G/K \cong \Sph^2$ or $G/K$ is a point.
\begin{itemize}
    \item If $G/K$ is a point, then $M$ is diffeomorphic to the orbifold $\R^4/\Z_2$. Its universal orbifold cover is $(\R^4, \tilde{g})$ where $\tilde{g}$ is a Ricci-flat, asymptotically Euclidean metric. By the sharp case of the Bishop--Gromov volume comparison theorem, $\tilde{g}$ is the Euclidean metric. Hence $g$ is flat $\R^4/\Z_2$.
    \item If $G/K = \Sph^2$, then $M$ is a smooth manifold. Indeed, either all points in the orbit $G/K$ are orbifold points, or they are all smooth points. Since $M$ is assumed to have finitely many orbifold points, the latter must be true. As the principal part $(0,\infty) \times G/H$ is already smooth, it follows that $M$ is smooth. By construction, $M$ is diffeomorphic to $\O_{\CP^1}(-2) \cong T^*\Sph^2$. By a theorem of Lock and Viaclovsky \cite{lock-viaclovsky}*{Theorem 1.5}, $(M,g)$ is hyperk\"ahler. Kronheimer's classification \cite{kronheimer} now implies that $(M,g)$ is Eguchi--Hanson space.
\end{itemize}
This concludes Case (A).

\subsection{Case (B)} \label{subsec:caseB}

In this subsection, we assume $G$ does not act transitively on $(\RP^3,g_\rho)$ for all $\rho \geq R_0$. We will show that $(M,g)$ must be the flat $\R^4/\Z_2$.
\begin{corollary} \label{cor:Gdim}
    We have $\dim G = 3$.
\end{corollary}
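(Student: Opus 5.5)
We need $\dim G = 3$ in Case (B). By the setup in \S\ref{subsec:case-split}, $\dim G \geq 3$: Proposition \ref{prop:exact-KFs} gives $\dim \mathcal{V} \geq \bar{d} - 3 = 6 - 3 = 3$, since $\I(\RP^3,\bar{g}_\infty)$ has dimension 6. So it suffices to rule out $\dim G \geq 4$. By Lemma \ref{lem:level-set-isos}, $G \cong G_\rho$ for all $\rho \geq R_0$. Hence $\dim G \geq 4$ would give $\dim \I(\RP^3, g_\rho) \geq \dim G_\rho \geq 4$.

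Assume for contradiction that $\dim G \geq 4$, and fix some $\rho \geq R_0$. The first step is to show that $g_\rho$ is homogeneous. A metric on a 3-manifold whose isometry group has dimension at least 4 is homogeneous; this is the fact from \cite{kobayashi-groups}*{p. 48--49} already used in Corollary \ref{cor:closed-subgroup-rp3}. The point is that the isotropy subgroup at any point embeds in $O(3)$, so an orbit of the full isometry group has dimension at least $4 - 3 = 1$. A finer analysis then forces a 3-dimensional orbit, which must be all of $\RP^3$.

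With $g_\rho$ homogeneous and $\dim \I(\RP^3, g_\rho) \geq 4$, we apply Corollary \ref{cor:RP3-subgroups-transitive} to the connected subgroup $G_\rho \subseteq \I^0(\RP^3, g_\rho)$, which has $\dim G_\rho \geq 4$. It says $G_\rho$ acts transitively on $\RP^3$. The action of $G_\rho$ on $(\RP^3, g_\rho)$ is, via the identification of $\Sigma_\rho$ with $\mathcal{S}$, exactly the restriction of the $G$-action to the invariant leaf $\Sigma_\rho$. This invariance holds because every element of $\mathfrak{g}$ is tangent to the leaves (Corollary \ref{cor:g-jacobi}). So $G$ acts transitively on $(\RP^3, g_\rho)$, which contradicts the Case (B) assumption. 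Therefore $\dim G = 3$.

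The only step with real content is the homogeneity claim. It is a classical result, already invoked earlier in the paper, so I expect no genuine obstacle; the remaining steps are bookkeeping with Lemma \ref{lem:level-set-isos} and Corollary \ref{cor:RP3-subgroups-transitive}.
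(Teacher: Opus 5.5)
Your proposal is correct and follows the paper's proof essentially step for step. You get $\dim G \geq 3$ from the setup, and $G \cong G_\rho$ then gives $\dim \I(\RP^3, g_\rho) \geq 4$. Homogeneity follows from the cited fact in \cite{kobayashi-groups}, and Corollary \ref{cor:RP3-subgroups-transitive} forces transitivity, which contradicts the Case (B) assumption.
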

\begin{proof}
    From \S\ref{subsec:case-split}, we already know that $\dim G \geq 3$. If $\dim G \geq 4$, then since $G \cong G_\rho \subseteq \I^0(\RP^3, g_\rho)$ for each $\rho \geq R_0$, it follows that $\dim \I(\RP^3, g_\rho) \geq 4$. This makes $g_\rho$ homogeneous \cite{kobayashi-groups}*{p. 48--49}.
    By Corollary \ref{cor:RP3-subgroups-transitive}, $G$ must act transitively on $(\RP^3,g_\rho)$, a contradiction.
\end{proof}

Let $(\tilde{M}_\infty, \tilde{g})$ be the Riemannian universal cover of the end of $(M,g)$. Then $(\tilde{M}_\infty, \tilde{g})$ is an incomplete asymptotically Euclidean manifold of order 4 diffeomorphic to $\R^4 \setminus B_R(0)$, and it is foliated by CMC 3-spheres. Lifting the action of $G$ on the end of $(M,g)$, we obtain a compact covering group $\tilde{G}$ acting isometrically and effectively on $(\tilde{M}_\infty, \tilde{g})$. By Corollary \ref{cor:Gdim}, $\dim\tilde{G} = 3$. This restricts to an isometric and effective action on all CMC leaves $(\Sph^3, \tilde{g}_\rho)$, so $\tilde{G}$ can be identified as a subgroup of $\I(\Sph^3, \tilde{g}_\rho)$. Its action on $(\Sph^3, \tilde{g}_\rho)$ is not transitive. Let $\tilde{G}^0$ be its identity component. By the classification of compact connected Lie groups of dimension 3, $\tilde{G}^0$ is $SU(2)$, $SO(3)$, or $\mathbb{T}^3$. However, the next lemma excludes $\tilde{G}^0 = \mathbb{T}^3$. Although this is well-known, we supply a proof for the reader's convenience.

\begin{lemma}
    $\mathbb{T}^3$ cannot act smoothly and effectively on $\Sph^3$.
\end{lemma}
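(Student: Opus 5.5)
The plan is to argue by contradiction using isotropy representations at a fixed point, or more precisely at a point with maximal isotropy, and the slice theorem. Suppose $\mathbb{T}^3$ acts smoothly and effectively on $\Sph^3$. Since $\mathbb{T}^3$ is compact, average a Riemannian metric over it so that the action becomes isometric for some metric $g$ on $\Sph^3$. Then $\mathbb{T}^3$ embeds as a closed subgroup of $\I(\Sph^3,g)$. The first step is a dimension count on orbits: every orbit is a torus $\mathbb{T}^3/H_x$ of dimension at most 3.

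First consider the case where some orbit is 3-dimensional. Then that orbit is open. It is also compact, hence closed, so by connectedness it is all of $\Sph^3$. This would make $\Sph^3$ diffeomorphic to $\mathbb{T}^3/H_x$, which is a torus since $H_x$ is a finite subgroup. That is impossible, because $\pi_1(\Sph^3)=0$ while a 3-torus has $\pi_1 \cong \Z^3$.

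So every orbit has dimension at most 2, and every isotropy group $H_x$ has dimension at least 1. The key step, which I expect to be the main point, is to show that an effective torus action on a 3-manifold cannot have all isotropy groups of positive dimension. By the principal orbit type theorem, there is an open dense set of points sharing a common principal isotropy group $H$ (up to conjugacy, and $\mathbb{T}^3$ is abelian, so genuinely the same $H$). I would use the following argument.
\begin{itemize}
\item At a principal point $x$, the slice representation of $H$ on the normal space $\nu_x$ to the orbit is trivial, since principality means the isotropy acts trivially on the slice.
\item $H$ also acts trivially on $T_x(\text{orbit})$, because the group is abelian: $h$ fixes every point $tx$, since $h\,tx = t\,hx = tx$.
\item Hence $H$ acts trivially on $T_x\Sph^3$. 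As $H$ acts by isometries fixing $x$ with trivial differential, $H$ acts trivially near $x$, and therefore on all of $\Sph^3$ by connectedness.
\end{itemize}
Effectiveness then forces $H=\{1\}$. But then the principal orbits are 3-dimensional, contradicting the previous paragraph.

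Actually the abelian argument is simpler still and bypasses the principal orbit theory. Pick $x$ with isotropy $H_x$ of minimal dimension. Its identity component $H_x^0$ fixes the whole orbit pointwise, by commutativity. On the normal slice, which has dimension at most 3, $H_x^0$ acts by a linear torus representation. Nearby points in the slice have isotropy containing $H_x^0$ only if they are fixed by it, and by minimality of dimension together with the slice theorem, $H_x^0$ must fix a neighborhood of the slice. Then $H_x^0$ fixes an open set and hence acts trivially on $\Sph^3$, so $H_x^0$ is trivial and $\dim H_x = 0$. This returns us to the 3-dimensional orbit case, which we have already ruled out. The delicate point is this minimal-dimension step: a positive-dimensional torus acting linearly on $\R^k$ has a fixed subspace of positive codimension, and nearby non-fixed points have strictly smaller isotropy, which would contradict minimality unless the action is trivial.
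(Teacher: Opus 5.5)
Your proof is correct, and your first argument is essentially the paper's: commutativity makes the principal isotropy $H$ a single subgroup, effectiveness forces $H=\{1\}$, and then a principal orbit is an open and closed copy of $\mathbb{T}^3$ filling $\Sph^3$, which is impossible. The only differences are that you show $H$ acts trivially via its trivial differential at one principal point (the paper just notes $H$ fixes the dense set of principal points), and that your alternative minimal-isotropy-dimension argument via the slice theorem is also valid and avoids the principal orbit type theorem, though it is not needed.
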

\begin{proof}
    Suppose such an action exists. Since $\mathbb{T}^3$ is abelian, the isotropy subgroup $H \subset \mathbb{T}^3$ of all points in principal orbits are equal (not just conjugate). Thus, if $H$ is nontrivial, then there exists a nontrivial element $\phi \in H$ which fixes all points in principal orbits. Since principal orbits form a dense open set, it follows that $\phi$ fixes all points on $\Sph^3$. However, this contradicts effectiveness, so $H$ must be trivial. It follows that each principal orbit of the action is 3-dimensional (diffeomorphic to $\mathbb{T}^3$), hence open in $\Sph^3$. On the other hand, all orbits of the action must be closed. Therefore, a principal orbit must be the whole of $\Sph^3$. But this implies $\Sph^3$ is diffeomorphic to $\mathbb{T}^3$, a contradiction.
\end{proof}

Next we determine the cohomogeneity of the action of $\tilde{G}^0$ on $(\tilde{M}_\infty, \tilde{g})$, which is one plus the cohomogeneity of the action restricted to $(\Sph^3, \tilde{g}_\rho)$. On $(\Sph^3, \tilde{g}_\rho)$, the principal orbits have positive dimension as the action is effective, but they cannot have dimension 3 as the action is not transitive. Neither can they have dimension 1, because $SO(3)$ and $SU(2)$ do not contain closed subgroups of dimension 2. Therefore, the principal orbits have dimension 2, so the action has cohomogeneity one on $(\Sph^3, \tilde{g}_\rho)$. This also rules out $\tilde{G}^0 = SU(2)$, as $SU(2)$ cannot act effectively with cohomogeneity one on a 3-sphere (see e.g. \cite{hsiang-lawson}).
Thus, $\tilde{G}^0 = SO(3)$. The next proposition summarizes the preceding discussion.

\begin{proposition} \label{prop:action-classification}
    $\tilde{G}^0 \cong SO(3)$ acts smoothly, isometrically, and effectively on $(\tilde{M}_\infty, \tilde{g})$ with cohomogeneity two. The action preserves all CMC leaves $(\Sph^3, \tilde{g}_\rho)$, and restricts to act with cohomogeneity one on each leaf.
\end{proposition}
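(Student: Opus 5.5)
The proposition essentially collects the discussion preceding it, so the plan is to assemble those steps in order and check each piece.

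\emph{Step 1: the lifted action.} First I record the setup. By Case (B) and Corollary \ref{cor:Gdim}, $G$ is a compact $3$-dimensional group acting isometrically on the end of $(M,g)$. By Lemma \ref{lem:level-set-isos}, it preserves every CMC leaf $\Sigma_\rho$ with $\rho \geq R_0$ and acts effectively on each leaf. The end is diffeomorphic to $(R,\infty)\times\RP^3$, so its universal cover $\tilde{M}_\infty$ is diffeomorphic to $(R,\infty)\times\Sph^3$, with leaves lifting to $3$-spheres. Killing fields lift to the cover, so the action of $G$ lifts to an isometric action of a covering group $\tilde{G}$ (at most a $\Z_2$-extension, hence compact). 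I take $\tilde{G}$ to be the group generated by lifts together with the deck transformation, which is effective. It preserves each lifted leaf, and effectiveness on $\tilde{M}_\infty$ restricts to effectiveness on each $(\Sph^3,\tilde{g}_\rho)$. The reason is that an isometry fixing an open set of leaves pointwise is the identity, by the fact that an isometry fixing an open set is trivial, applied via the leaf identification given by the flow of $\nabla H$ (Corollary \ref{cor:g-jacobi}).

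\emph{Step 2: $\tilde{G}^0 \cong SO(3)$.} A compact connected $3$-dimensional Lie group is $SU(2)$, $SO(3)$ or $\mathbb{T}^3$. I exclude $\mathbb{T}^3$ by the preceding lemma. Next I identify the principal orbit dimension on $\Sph^3$:
\begin{itemize}
\item it is not $0$, by effectiveness;
\item it is not $3$, since $G$ is not transitive on $\RP^3$, and transitivity upstairs would descend;
\item it is not $1$, since $SU(2)$ and $SO(3)$ have no $2$-dimensional closed subgroups.
\end{itemize}
Hence principal orbits on each leaf are $2$-dimensional, with isotropy of dimension $1$. I then rule out $SU(2)$: an effective cohomogeneity-one action of $SU(2)$ on $\Sph^3$ is excluded by the Hsiang--Lawson classification. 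Alternatively, I can argue directly that $-1 \in SU(2)$ lies in every circle isotropy group conjugate to $U(1)$, so it fixes all principal orbits and hence acts trivially, contradicting effectiveness. This second argument is what I would actually write.

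\emph{Step 3: cohomogeneity on $\tilde{M}_\infty$.} The leaves are $\tilde{G}^0$-invariant and are the level sets of the lifted function $H$, which satisfies $\nabla H \neq 0$. The orbit space is therefore locally the product of the one-dimensional orbit space of each leaf with an interval in $\rho$. So the cohomogeneity on $\tilde{M}_\infty$ is $1+1=2$.

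The main subtle point is Step 1: making sure the lifted group acts effectively on each individual leaf, rather than merely on the whole of $\tilde{M}_\infty$. I would handle this as indicated, using commutation with $\nabla H$ to transport any pointwise-fixed leaf to neighbouring leaves.
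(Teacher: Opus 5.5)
Your proposal is correct and follows the paper's argument essentially step for step. You lift $G$ to a compact covering group acting effectively on each lifted leaf, exclude $\mathbb{T}^3$ by the preceding lemma, and pin the principal orbit dimension on $\Sph^3$ to $2$ using the same three observations, which gives cohomogeneity $1+1=2$. The differences are minor and both valid: you justify leafwise effectiveness explicitly, which the paper only asserts; and instead of citing Hsiang--Lawson, you exclude $SU(2)$ by noting that every $1$-dimensional closed subgroup of $SU(2)$ contains the central element $-1$, which would then fix a dense set of points.
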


\begin{corollary} \label{cor:structure-coho2}
    \begin{enumerate}[label=(\alph*)]
        \item The action has principal isotropy subgroup $H \cong SO(2)$.
        \item Each principal orbit is diffeomorphic to $\Sph^2$ and the metric on it is round.
        \item The induced metric on each CMC leaf $(\Sph^3, \tilde{g}_\rho)$ is of the form
        \begin{align}
            ds^2 + f(s)^2 g_{\Sph^2},
        \end{align}
        where $s \in (0,1)$, and $f: (0,1) \to \R$ is a smooth positive function with $f(0) = f(1) = 0$.
    \end{enumerate}
\end{corollary}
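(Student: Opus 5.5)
The argument has three steps: determine the principal isotropy by dimension counting, identify the orbit metric by isotropy irreducibility, and read off the leaf metric from cohomogeneity-one structure theory on a leaf.

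\emph{Principal isotropy.} By Proposition \ref{prop:action-classification}, $SO(3)$ acts on each CMC leaf $(\Sph^3,\tilde g_\rho)$ with cohomogeneity one, so principal orbits are 2-dimensional. Their isotropy subgroups are therefore closed 1-dimensional subgroups $H\subset SO(3)$. The identity component of such an $H$ is a maximal torus $SO(2)$, so $H$ is either $SO(2)$ or its normalizer $O(2)$. The principal orbit $SO(3)/H$ is then $\Sph^2$ or $\RP^2$.

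To rule out $O(2)$, I would use that $\Sph^3$ is simply connected and compact, so the orbit space of the leaf is a closed interval with two singular orbits. By Proposition \ref{prop:coho-1-structure} applied to the leaf (the same Mostert structure theory), $\Sph^3$ is a union of two disk bundles over the singular orbits, glued along a principal orbit. If the principal orbit were $\RP^2$, the disk bundles would be $D^1$-bundles ($K/H=\Sph^0$). A closed-subgroup check shows that no $K\supset O(2)$ has $K/O(2)$ a sphere, since $SO(3)/O(2)=\RP^2$. The remaining alternative of a point-like singular orbit with $K=SO(3)$ and $K/H=\RP^2$ is not a sphere either. So $H=SO(2)$, $K=SO(3)$ at both ends, and the principal orbits are $\Sph^2$. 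This gives (a) and the first half of (b). The argument also shows that both singular orbits are fixed points. Cross-checking this against van Kampen (a union of two $3$-balls) is a routine consistency step.

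\emph{Round orbit metric.} The isotropy representation of $SO(2)$ on $T(SO(3)/SO(2))\cong\R^2$ is rotation, which is irreducible. Hence every $SO(3)$-invariant metric on $\Sph^2$ is a constant multiple of the round metric. This finishes (b).

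\emph{Form of the leaf metric.} Let $s$ be the arclength parameter along a unit-speed geodesic orthogonal to the principal orbits in $(\Sph^3,\tilde g_\rho)$, rescaled so that the orbit space is $[0,1]$. After reparametrizing (the normalization of the length to $1$ is cosmetic, or else one keeps the length $L_\rho$), Proposition \ref{prop:coho-1-structure}(e) gives $\tilde g_\rho=ds^2+f(s)^2g_{\Sph^2}$ on the principal part. The endpoints correspond to the two fixed points, where the orbits collapse, so $f$ extends continuously with $f(0)=f(1)=0$. Smoothness of $f$ on the open interval follows from smoothness of the metric.

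\emph{Main obstacle.} The step needing the most care is excluding $H=O(2)$ (orbits $\RP^2$) and the exceptional-orbit cases. I would handle it by the Mostert-type enumeration of admissible $K\supsetneq H$ with $K/H$ a sphere. If that enumeration is fragile, a fallback is to use the lift to $\tilde G^0$ acting effectively: compare with $SO(3)\subset SO(4)$ fixing an axis, which by the last row of Table \ref{table:2} is the model action, and note that an $\RP^2$ principal orbit cannot separate $\Sph^3$ appropriately, by an orientability or Euler characteristic argument.
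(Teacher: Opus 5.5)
Your proof is correct. Parts (b) and (c) match the paper: you use isotropy irreducibility of $SO(2)$ on $\R^2$, then cohomogeneity-one structure theory on a leaf.

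The real difference is how you exclude $H \cong O(2)$. The paper's argument is one line: a principal orbit would then be an $SO(3)/O(2) \cong \RP^2$ embedded in the leaf $\Sph^3$, which is impossible. You instead run Mostert's structure theory on the leaf. Simple connectivity of $\Sph^3$ forces the orbit space to be $[0,1]$, and no closed $K \supsetneq O(2)$ in $SO(3)$ has $K/O(2)$ a sphere: the only candidate is $SO(3)$, with quotient $\RP^2$. This route is group-theoretic rather than a topological non-embedding fact. Its advantage is that it sets up the endpoint analysis that part (c) needs, which the paper leaves implicit.

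One correction to that endpoint analysis. With $H = SO(2)$, the enumeration allows two types of end:
\begin{itemize}
\item $K = SO(3)$, a fixed point, with $K/H = \Sph^2$;
\item $K = O(2)$, an exceptional $\RP^2$ orbit, with $K/H = \Sph^0$.
\end{itemize}
So it does not by itself show that both ends are fixed points, as you claim. Excluding $K = O(2)$ is exactly where van Kampen is needed. The half near such an end is the twisted $I$-bundle over $\RP^2$, with $\pi_1 = \Z_2$. Gluing two halves along a simply connected $\Sph^2$ would then give $\pi_1 \neq 0$. Alternatively, you can invoke the non-embeddability of $\RP^2$ in $\Sph^3$ again. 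Either way, this step is load-bearing for $f(0) = f(1) = 0$, not merely a consistency check.
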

\begin{proof}
    Since the principal orbits are 2-dimensional, the principal isotropy subgroup $H$ is a 1-dimensional closed Lie subgroup of $SO(3)$. The only ones up to conjugacy are $SO(2)$ and $O(2)$. If $H \cong O(2)$, then each principal orbit is an $SO(3)/O(2) \cong \RP^2$ embedded in a CMC leaf of $\tilde{M}_\infty$. But it is impossible for $\RP^2$ to embed into $\Sph^3$. Hence, $H \cong SO(2)$ and part (a) follows. The principal orbits are then $SO(3)/SO(2) \cong \Sph^2$ equipped with $SO(3)$-invariant metrics, which are necessarily round. This proves (b). Finally, (c) follows by restricting to a CMC leaf where the action has cohomogeneity one, then applying the structure theory of cohomogeneity one metrics (e.g. \cite{alexandrino-bettiol}*{\S 6.3}). (This is similar to Proposition \ref{prop:coho-1-structure}, except the orbit space is a closed interval in this case.)
\end{proof}

Let $\tilde{M}_\infty^{\mathrm{prin}} \subset \tilde{M}_\infty$ be the union of principal orbits with respect to the action of $\tilde{G}^0$. The orbit space $B := \tilde{M}_\infty^{\mathrm{prin}}/\tilde{G}^0$ is a connected Riemannian manifold \cite{alexandrino-bettiol}*{Theorem 3.82}.

We claim that $B$ is diffeomorphic to $\R^2$. Indeed, Corollary \ref{cor:structure-coho2} shows that each CMC leaf carries a cohomogeneity one $\tilde{G}^0$-action whose principal part quotients to an open interval $(0,1)$, while $\tilde{M}_\infty$ itself is foliated by these CMC leaves and is diffeomorphic to $(R_0,\infty) \times \Sph^3$. Thus, the orbit space $B$ of $\tilde{M}_\infty^{\mathrm{prin}}$ is $(R_0,\infty) \times (0,1)$, which is diffeomorphic to $\R^2$.

For each $x \in \tilde{M}_\infty^{\mathrm{prin}}$, we have
\begin{align}
    T_x \tilde{M}_\infty^{\mathrm{prin}} = \mathcal{H}_x \oplus \mathcal{V}_x,
\end{align}
where $\mathcal{V}_x$ is the tangent space to the $\Sph^2$ orbit and $\mathcal{H}_x = \mathcal{V}_x^\perp$ is its $\tilde{g}$-orthogonal complement. We call $\mathcal{H}_x$ and $\mathcal{V}_x$ the \emph{horizontal} and \emph{vertical} subspaces at $x$ respectively.
Since $\tilde{G}^0$ acts isometrically, it preserves the subbundles $\mathcal{H}$ and $\mathcal{V}$. Restricting to the isotropy subgroup $H$ at $x$ gives the following.
\begin{itemize}
    \item $\mathcal{H}_x$ carries the slice representation of $H = SO(2)$. This is a trivial representation because the orbit of $x$ is principal \cite{alexandrino-bettiol}*{\S 3.4}.
    \item $\mathcal{V}_x$ carries the isotropy representation of $H$, which is identified with the adjoint representation of $H$ on an $\mathrm{Ad}(H)$-invariant complement of $\mathfrak{h}$ in $\mathfrak{g}$. As one checks, this is isomorphic to the defining representation of $SO(2)$.
\end{itemize}
Let $(\cdot)^{\mathcal{H}}$ and $(\cdot)^{\mathcal{V}}$ denote orthogonal projections of tangent vectors to $\mathcal{H}$ and $\mathcal{V}$ respectively.
Following O'Neill \cite{oneill}, we define tensor fields $A$ and $T$ by setting at each $x \in \tilde{M}_\infty^{\mathrm{prin}}$
\begin{align}
    A_x: \wedge^2\mathcal{H}_x \to \mathcal{V}_x, \quad &A_x(X,Y) = \frac{1}{2}[\tilde{X},\tilde{Y}]^{\mathcal{V}}, \\
    T_x: \mathcal{V}_x \otimes \mathcal{V}_x \to \mathcal{H}_x, \quad &T_x(Z,W) = (\nabla_{\tilde{Z}} \tilde{W})^{\mathcal{H}},
\end{align}
where $\tilde{X}$ and $\tilde{Y}$ are local horizontal extensions of $X$ and $Y$, and $\tilde{Z}$ and $\tilde{W}$ are local vertical extensions of $Z$ and $W$. Vanishing of $A$ is equivalent to integrability of the horizontal distribution. Meanwhile, $T$ is the second fundamental form of the $\Sph^2$ orbits. It is straightforward to verify that for all $\phi \in \tilde{G}^0$,
\begin{align} \label{eq:AT-G-equivariant}
    A(d\phi(X), d\phi(Y)) = d\phi(A(X,Y)), \quad T(d\phi(Z), d\phi(W)) = d\phi(T(Z,W)).
\end{align}
In particular, $A_x$ and $T_x$ are $H$-equivariant maps at each $x \in \tilde{M}_\infty^{\mathrm{prin}}$.

\begin{lemma} \label{lem:submersion-properties}
    \begin{enumerate}[label=(\alph*)]
        \item $A \equiv 0$, i.e. the horizontal distribution $\mathcal{H} \subset T\tilde{M}_\infty^{\mathrm{prin}}$ is integrable.
        \item Each principal orbit is \emph{umbilic}, i.e. $T_x(Z,W) = \tilde{g}_x(Z,W) \eta_x$ for all $Z,W \in \mathcal{V}_x$ where $\eta_x \in \mathcal{H}_x$ is the mean curvature vector.
        \item Each principal orbit is \emph{spheric}, i.e. the mean curvature vector field $\eta$ along any principal orbit satisfies $(\nabla_Z \eta)^{\mathcal{H}} = 0$ for all vertical vectors $Z \in \mathcal{V}_x$.
    \end{enumerate}
\end{lemma}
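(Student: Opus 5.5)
Each of the three claims comes from $H$-equivariance of $A_x$ and $T_x$ under the isotropy $H \cong SO(2)$ at a principal point $x$. We use two facts: $H$ acts trivially on $\mathcal{H}_x$ (slice representation), and it acts on $\mathcal{V}_x \cong \R^2$ by the defining rotation representation. Equivariance \eqref{eq:AT-G-equivariant} with $\phi \in H$ (so $d\phi_x$ is the identity on $\mathcal{H}_x$) then reduces everything to simple representation theory.

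For (a), take $X,Y \in \mathcal{H}_x$. Equivariance gives $A_x(X,Y) = A_x(d\phi X, d\phi Y) = d\phi(A_x(X,Y))$ for all $\phi \in H$. So $A_x(X,Y)$ is a vector in $\mathcal{V}_x$ fixed by every rotation in $SO(2)$. The defining representation has no nonzero fixed vectors, so $A_x(X,Y)=0$.

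For (b), fix a unit horizontal $N \in \mathcal{H}_x$ and consider the symmetric bilinear form $B_N(Z,W) = \tilde{g}(T_x(Z,W),N)$ on $\mathcal{V}_x$. Since $d\phi N = N$ for $\phi \in H$, equivariance gives $B_N(d\phi Z, d\phi W) = B_N(Z,W)$, so $B_N$ is an $SO(2)$-invariant symmetric form on $\R^2$. Any such form is a multiple of the metric. Hence $T_x(Z,W) = \tilde{g}(Z,W)\eta_x$, where $\eta_x = \frac{1}{2}\tr T_x$. (Alternatively, one can note that the whole metric is $SO(3)$-invariant on round $\Sph^2$ orbits, but the isotropy argument is cleaner.)

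For (c), I would use the $\tilde{G}^0$-invariance of $\eta$. The map $Z \mapsto (\nabla_Z \eta)^{\mathcal{H}}$ is a linear map $\mathcal{V}_x \to \mathcal{H}_x$. It is $H$-equivariant because $\eta$ is $\tilde{G}^0$-invariant, by \eqref{eq:AT-G-equivariant} and isometry invariance, and $\nabla$ commutes with isometries. Write it as $Z \mapsto L(Z)$ with $L(d\phi Z) = d\phi L(Z) = L(Z)$. Then $L$ vanishes on all vectors $d\phi Z - Z$, and these span $\mathcal{V}_x$ since the rotation action has no fixed vectors. So $L = 0$.

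The main subtlety is only to justify the equivariance in (c) carefully. An isometry $\phi$ fixing $x$ preserves $\eta$ as a vector field, so $d\phi(\nabla_Z\eta) = \nabla_{d\phi Z}\eta$, and it also preserves the $\mathcal{H}/\mathcal{V}$ splitting.
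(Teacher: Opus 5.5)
Your proposal is correct and is essentially the paper's argument: in each part you combine $H$-equivariance at $x$ with the triviality of the slice representation on $\mathcal{H}_x$ and the absence of nonzero $H$-fixed vectors in $\mathcal{V}_x$, and for (b) you use that $H$-invariant symmetric forms on $\mathcal{V}_x$ are multiples of $\tilde{g}_x$. Your spanning argument in (c), that the vectors $d\phi Z - Z$ exhaust $\mathcal{V}_x$, is just an explicit form of the same representation-theoretic fact the paper invokes.
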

\begin{proof}
    (a) Since $\mathcal{H}_x$ is a trivial $H$-module, so is $\wedge^2\mathcal{H}_x$. Also, $\mathcal{V}_x$ has no nonzero vectors fixed by $H$. Since $A_x$ is $H$-equivariant, these facts force $A_x = 0$.

    (b) Since $T_x$ is $H$-equivariant, and $\mathcal{H}_x$ is a trivial $H$-module, it follows that $T_x$ is $H$-invariant. Take a basis $\{e_1,e_2\}$ for $\mathcal{H}_x$ and write $T_x(Z,W) = T_x^{(1)}(Z,W) e_1 + T_x^{(2)}(Z,W) e_2$; then $T_x^{(i)}: \mathcal{V}_x \otimes \mathcal{V}_x \to \R$ is $H$-invariant for $i=1,2$. By Schur's lemma, the space of $H$-invariant symmetric 2-tensors on $\mathcal{V}_x$ is one-dimensional, spanned by $\tilde{g}_x$. Hence, $T_x^{(1)}$ and $T_x^{(2)}$ each are multiples of $\tilde{g}_x$, varying smoothly in $x$. Thus, $T_x(Z,W) = \tilde{g}_x(Z,W) \nu_x$ for some $\nu_x \in \mathcal{H}_x$. As $T_x$ is the second fundamental form of the orbit at $x$, we must have $\nu_x = \eta_x$.

    (c) Using part (b) and \eqref{eq:AT-G-equivariant}, we see that $d\phi(\eta_x) = \eta_{\phi(x)}$ for each $\phi \in \tilde{G}^0$. Using this, one verifies that
    \begin{align} \label{eq:59682}
        d\phi((\nabla_Z \eta)^{\mathcal{H}}) = (\nabla_{d\phi(Z)}\eta)^{\mathcal{H}} \quad \text{for all } \phi \in \tilde{G}^0, \, Z \in \mathcal{V}_x.
    \end{align}
    Restricting to $\phi \in H$, \eqref{eq:59682} implies that $(\nabla\eta)^{\mathcal{H}}: \mathcal{V}_x \to \mathcal{H}_x$ is $H$-equivariant. Again, by the triviality of $\mathcal{H}_x$ and the fact that $\mathcal{V}_x$ has no points fixed by $H$, this yields $(\nabla\eta)^{\mathcal{H}} = 0$ at $x$.
\end{proof}

Recall that $B = \tilde{M}_\infty^{\mathrm{prin}}/\tilde{G}^0$ is a connected Riemannian manifold diffeomorphic to $\R^2$.
\begin{proposition} \label{prop:prin-warped}
    $(\tilde{M}_\infty^{\mathrm{prin}}, \tilde{g})$ is isometric to a warped product $(B \times \Sph^2, h + \xi^2 g_{\Sph^2})$, where $h$ is a metric on $B$ and $\xi: B \to (0,\infty)$ is a smooth positive function.
\end{proposition}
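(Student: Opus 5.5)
The approach is to combine the three properties of Lemma \ref{lem:submersion-properties} with the standard characterization of warped products, due to Hiepko: a Riemannian manifold with an orthogonal splitting $T = \mathcal{H} \oplus \mathcal{V}$ is locally a warped product when $\mathcal{H}$ is integrable with totally geodesic leaves and $\mathcal{V}$ is integrable with spheric (umbilic, parallel mean curvature) leaves. We then globalize using the $\tilde{G}^0$-action and the topology $B \cong \R^2$.

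\textbf{Step 1: the horizontal leaves are totally geodesic.} Lemma \ref{lem:submersion-properties}(a) already gives integrability of $\mathcal{H}$. We also need $(\nabla_X Y)^{\mathcal{V}} = 0$ for horizontal $X,Y$. Its antisymmetric part is $A$, which vanishes. For the symmetric part, take a Killing field $V$ from $\mathfrak{g}$, which is tangent to the orbits, and horizontal fields $X,Y$. Then
\[
\tilde{g}(\nabla_X Y, V) = -\tilde{g}(Y, \nabla_X V) = \tilde{g}(X, \nabla_Y V),
\]
using $\tilde{g}(Y,V)=0$ and the Killing equation. The right-hand side equals $-\tilde{g}(\nabla_Y X, V)$. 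Hence $(\nabla_X Y + \nabla_Y X)^{\mathcal{V}} = 0$, and since the Killing fields span $\mathcal{V}$, the horizontal leaves are totally geodesic. In other words, the orbit projection $\pi: \tilde{M}_\infty^{\mathrm{prin}} \to B$ is a Riemannian submersion with totally geodesic, integrable horizontal distribution.

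\textbf{Step 2: local warped product structure.} The vertical distribution is integrable (its leaves are the $\Sph^2$ orbits), umbilic by Lemma \ref{lem:submersion-properties}(b), and spheric by Lemma \ref{lem:submersion-properties}(c). Hiepko's theorem then gives a local isometry with a warped product $h + \xi^2 g_{\Sph^2}$.

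To make this concrete, define $\xi: B \to (0,\infty)$ by requiring that the orbit $\pi^{-1}(b)$, with its induced metric, be round of radius $\xi(b)$. This is possible by Corollary \ref{cor:structure-coho2}(b), and $\xi$ is smooth because orbit areas vary smoothly. Umbilicity gives $\L_X \tilde{g}|_{\mathcal{V}} = -2\tilde{g}(X,\eta)\,\tilde{g}|_{\mathcal{V}}$ for basic horizontal $X$. Sphericity, together with $A = 0$, shows that $\eta = -\nabla(\log \xi\circ\pi)$ is basic.

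\textbf{Step 3: globalization (the main obstacle).} The integrable, totally geodesic horizontal distribution gives a flat Ehresmann connection on the bundle $\pi: \tilde{M}_\infty^{\mathrm{prin}} \to B$. Since $B \cong \R^2$ is simply connected, the holonomy is trivial; completeness of horizontal lifts is also needed, and this follows because the fibers are compact. Choosing one horizontal leaf $L$ then yields a diffeomorphism $L \times \Sph^2 \to \tilde{M}_\infty^{\mathrm{prin}}$ by flowing along fibers, equivalently by acting with $\tilde{G}^0/H$ after fixing a base point. Via $\pi$, the leaf $L$ is isometric to $(B,h)$.

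The $\tilde{G}^0$-action maps horizontal leaves to horizontal leaves. Since $H = SO(2)$ acts trivially on $\mathcal{H}$, the isotropy fixes the leaf through each point. Consequently, identifying each fiber with a fixed round $\Sph^2$ equivariantly, the metric in the product coordinates is $h + \xi^2 g_{\Sph^2}$ with no cross terms: cross terms vanish because $\mathcal{H} \perp \mathcal{V}$, and the fiber metric is $SO(3)$-invariant, hence round.

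The delicate point is making the fiber identification canonical along horizontal leaves. I would handle this by noting that the horizontal flow commutes with $\tilde{G}^0$ and that the equivariant diffeomorphism between $SO(3)/SO(2)$ orbits is unique up to the normalizer. This normalizer is $O(2)$, and its action on $SO(3)/SO(2)$ can be absorbed by continuity on the connected space $B$.
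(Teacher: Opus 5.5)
Your proposal is correct, and its skeleton matches the paper's. Integrability of $\mathcal{H}$ together with $B \cong \R^2$ gives path-independent horizontal transport, and hence a global product $B \times \Sph^2$, exactly as in the paper. You also correctly note that compactness of the fibers ensures horizontal lifts exist, which the paper leaves implicit. In Step 1 you prove the Riemannian submersion and totally geodesic properties by a Killing-field computation; the paper instead cites Alexandrino--Bettiol and uses $A \equiv 0$.

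The real difference is in how the metric is identified in the product coordinates. The paper feeds totally geodesic horizontal slices and spheric fibers (Lemma \ref{lem:submersion-properties}(c)) into Ponge--Reckziegel, Proposition 3(c). Your Step 3 argues differently:
\begin{itemize}
\item Horizontal transport commutes with $\tilde{G}^0$, because $\tilde{G}^0$ preserves $\mathcal{H}$ and each fiber.
\item Hence $P_b^*(\tilde{g}|_{\pi^{-1}(b)})$ is an $SO(3)$-invariant metric on the fixed model $SO(3)/SO(2)$.
\item By irreducibility of the isotropy representation, it is a constant multiple of one fixed round metric.
\item Combined with $\mathcal{H} \perp \mathcal{V}$ and $\pi$ being a Riemannian submersion, this already gives $h + \xi^2 g_{\Sph^2}$.
\end{itemize}

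This route is more elementary and self-contained. It makes your Step 2, Hiepko's theorem, and Lemma \ref{lem:submersion-properties}(b),(c) unnecessary. The paper's route instead passes through a general foliation theorem that uses the group only to establish the local umbilic and spheric properties.

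Your closing worry about the normalizer is moot. Horizontal transport is already a canonical equivariant identification. In any case, $N(H)/H \cong \Z_2$ acts on $SO(3)/SO(2) \cong \Sph^2$ by the antipodal map, which is an isometry, so no ambiguity in the fiber metric can arise.
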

\begin{proof}
    By \cite{alexandrino-bettiol}*{Remark 3.99}, the quotient map $\pi: \tilde{M}_\infty^{\mathrm{prin}} \to B$ is a Riemannian submersion. By Corollary \ref{cor:structure-coho2}, the fibers are round 2-spheres. Fix a basepoint $b_0 \in B$ and identify $\pi^{-1}(b_0)$ with $\Sph^2$. For any $b \in B$, any smooth path $\gamma: [0,1] \to B$ with $\gamma(0) = b_0$ and $\gamma(1) = b$, and any $\theta \in \Sph^2$, let $\tilde{\gamma}: [0,1] \to \tilde{M}_\infty^{\mathrm{prin}}$ be the horizontal lift of $\gamma$ starting at $\theta$. Horizontal transport along $\tilde{\gamma}$ defines a diffeomorphism $P_\gamma: \Sph^2 \cong \pi^{-1}(b_0) \to \pi^{-1}(b)$. Since the horizontal distribution $\mathcal{H}$ is integrable by Lemma \ref{lem:submersion-properties}(a), $P_\gamma$ depends only on the homotopy class of $\gamma$. But $B \cong \R^2$ is contractible, so $P_\gamma$ is independent of the path $\gamma$ joining $b_0$ to $b$. Thus, there is a well-defined smooth map $P_b: \Sph^2 \to \pi^{-1}(b)$. A global diffeomorphism is given by
    \begin{align}
        B \times \Sph^2 \to \tilde{M}_\infty^{\mathrm{prin}}, \quad (b,\theta) \mapsto P_b(\theta).
    \end{align}
    Under this diffeomorphism, $\mathcal{H}$ is identified with $TB$ and $\mathcal{V}$ is identified with $T\Sph^2$. Since $A \equiv 0$, the submanifold $B \times \{\theta\} \subset B \times \Sph^2$ is totally geodesic for each $\theta \in \Sph^2$. Moreover, for each $b' \in B$, the submanifold $\{b'\} \times \Sph^2$ corresponds to a fiber of $\pi$, which by Lemma \ref{lem:submersion-properties} is spheric. We now apply \cite{ponge-reckziegel}*{Proposition 3(c)} to conclude that $(\tilde{M}_\infty^{\mathrm{prin}}, \tilde{g})$ is globally a warped product.
\end{proof}

The next lemma shows that under a suitable identification of $\tilde{M}_\infty$ with $\R^4 \setminus B_R(0)$, the action of $\tilde{G}^0$ asymptotes to Euclidean rotations fixing an axis. In Corollary \ref{cor:drho-not-0}, we draw some consequences of this.

\begin{lemma} \label{lem:limiting-action-Gtilde0}
    There is an asymptotically Euclidean diffeomorphism $\tilde{\Phi}: \tilde{M}_\infty \to \R^4 \setminus B_R(0)$ of order 4 and class $C^{100}$ (i.e. \eqref{eq:ALE-diffeo} holds only for $k \leq 100$) with the following properties.
    \begin{enumerate}[label=(\alph*)]
        \item $\tilde{\Psi}$ maps CMC 3-spheres in $\tilde{M}_\infty$ to coordinate spheres in $\R^4 \setminus B_R(0)$.
        \item Let $\tilde{\Psi}$ be the action of $\tilde{G}^0$ on $\tilde{M}_\infty \stackrel{\tilde{\Phi}}{\cong} \R^4 \setminus B_R(0)$, i.e.
        \begin{align}
            \tilde{\Psi}: \tilde{G}^0 \times (\R^4 \setminus B_R(0)) \to \R^4 \setminus B_R(0), \quad \tilde{\Psi}(\phi, x) = \tilde{\Phi}(\phi \cdot \tilde{\Phi}^{-1}(x)).
        \end{align}
        Also view $\Sph^3$ as the unit sphere in $\R^4$, and for $\rho > R$ define the rescaled actions
        \begin{align}
            \tilde{\Psi}_\rho: \tilde{G}^0 \times \Sph^3 \to \Sph^3, \quad \tilde{\Psi}_\rho(\phi,\theta) = \rho^{-1} \tilde{\Psi}(\phi, \rho\theta).
        \end{align}
        Then as $\rho \to \infty$, we have $\tilde{\Psi}_\rho \to \tilde{\Psi}_\infty$ in $C^{80}$, where
        \begin{align}
            \tilde{\Psi}_\infty: \tilde{G}^0 \times \Sph^3 \to \Sph^3.
        \end{align}
        is the standard $SO(3)$-action on $\Sph^3$ induced by Euclidean rotations of $\R^4$ fixing the $w$-axis, i.e. the span of $(0,0,0,1)$.
    \end{enumerate}
\end{lemma}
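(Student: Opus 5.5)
We would construct $\tilde{\Phi}$ in two stages: first straighten the CMC foliation, then show that the rescaled group actions converge to a limiting isometric action on the round sphere, and identify that limit.

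\emph{Construction of $\tilde{\Phi}$ and part (a).} Lift the ALE chart $\Phi$ (of order 4 by Theorem \ref{thm:ALE-decay-rate}) to a diffeomorphism $\tilde{\Phi}_0: \tilde{M}_\infty \to \R^4 \setminus B_R(0)$ satisfying \eqref{eq:ALE-diffeo-order-n}. By Theorem \ref{thm:cmc-foliation}(b), lifted to the cover, each CMC leaf is the radial graph $\{\rho(1+u_\rho(\theta))\theta\}$ with $u_\rho \to 0$ in every $C^k$. Define $\tilde{\Phi}$ by sending the point of $\Sigma_\rho$ lying over $\theta$ to $\rho\theta$. This maps CMC spheres to coordinate spheres, which is (a). For $\tilde{\Phi}$ to be AE of order 4, we need the quantitative decay $|\nabla^k(\rho u_\rho)| = \O(\rho^{-3-k})$ together with the corresponding $\rho$-derivatives. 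We would derive this by linearizing the CMC equation in the order-4 metric: the Jacobi operator $\Delta + 3$ on the unit sphere, with the kernel removed by the balancing/centering built into \cite{cev}, inverts with error $\O(\rho^{-4})$. Because Schauder estimates lose derivatives at each step, we only claim finitely many derivatives, hence class $C^{100}$. This decay estimate is the step we expect to be the main obstacle. If the full rate is hard to obtain, a weaker order would still suffice for (b), but order 4 is needed later.

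\emph{Convergence of rescaled actions, part (b).} Pull $\tilde{g}$ back by $\tilde{\Phi}$ and rescale by $\rho^{-2}$ on the annulus $\{\rho/2<|x|<2\rho\}$. By (a), the result converges to the Euclidean metric in $C^{99}$ at rate $\rho^{-4}$. The maps $\tilde{\Psi}_\rho(\phi,\cdot)$ are isometries of $\rho^{-2}\tilde{g}_\rho$ on the unit sphere, since the action preserves CMC leaves by Proposition \ref{prop:action-classification}. Isometries of metrics that are uniformly $C^{99}$-controlled are uniformly $C^{99}$-controlled themselves, because they are harmonic maps or, alternatively, are determined by their 1-jets via the exponential map. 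Derivatives in $\phi$ are given by the Killing fields. For these we use Proposition \ref{prop:exact-KFs}(d),(e): each Killing field in $\mathfrak{g}$ satisfies $W = U + \O(r^{-2+\epsilon})$, where $U$ extends a Killing field $\bar{U}$ of the round sphere. Arzelà--Ascoli then yields subsequential $C^{80}$ limits. Part (e) shows the limit is unique: its infinitesimal generators are exactly the fixed $\bar{U}$'s. So we get genuine convergence to an isometric action $\tilde{\Psi}_\infty$ of $\tilde{G}^0$ on the round $\Sph^3$. Since $\mathcal{V}\to\mathfrak{I}$ is injective, $\tilde{\Psi}_\infty$ is locally faithful, and its image is a 3-dimensional subgroup of $SO(4)$ isomorphic to $SO(3)$.

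\emph{Identification of the limit.} By Lemma \ref{lem:subalg-classification-s3}(a), the 3-dimensional subalgebras of $\mathfrak{so}(4)$ are, up to conjugacy, $\mathfrak{su}(2)_L$, $\mathfrak{su}(2)_R$, and $\mathrm{diag}(\mathfrak{su}(2))$. The first two act transitively and freely with group $SU(2)$. Those cases are excluded: each rescaled action has 2-dimensional principal orbits, and an action with all orbits of dimension at most 2 cannot converge in $C^1$ to a locally free action, since stabilizer dimension is upper semicontinuous in the limit. Hence the image is conjugate to $\mathrm{diag}(SU(2))/\Z_2 \cong SO(3)$, acting by rotations fixing an axis. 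Composing $\tilde{\Phi}$ with a fixed element of $SO(4)$, which preserves (a) and the AE property, we may take the fixed axis to be the $w$-axis.
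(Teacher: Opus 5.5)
Your outline is correct, but it differs from the paper at each of its three steps.

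\textbf{The chart.} The paper does not build the CMC gauge. It lifts the order-4, class-$C^{100}$ chart of \cite{biquard-hein}*{Theorem B}, which already sends CMC leaves to coordinate spheres. So the step you flag as the main obstacle can simply be cited. If you prove it yourself, no balancing is needed: the lifted $u_\rho$ is $\Z_2$-invariant, and $\Delta+3$ is invertible on $\RP^3$ by Lemma \ref{lem:convergence-eigenfn}. The real work is controlling $\rho$-derivatives, which can be done via the Jacobi equation for the normal speed of the foliation.

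\textbf{Convergence of the rescaled actions.} The paper applies Proposition \ref{prop:exact-KFs}(e), rechecked with finitely many derivatives in the new chart. It then uses a Gr\"onwall argument to show the $\tilde\Psi_\rho$ are uniformly Cauchy in $C^{80}$. Your Arzel\`a--Ascoli plus uniqueness-of-generators argument is equivalent, but you should make three points explicit:
\begin{enumerate}
\item Part (e) covers all of $\mathfrak{g}$ only because $\mathcal{V}=\mathfrak{g}$ here, since $\dim\mathcal{V}\geq 3=\dim G$ by Corollary \ref{cor:Gdim}.
\item Part (e) is stated for the smooth chart, so it must be transferred to $\tilde\Phi$ (the two differ by $\O(r^{-3})$).
\item Convergence of the rescaled metrics comes from the AE property, not from (a).
\end{enumerate}

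\textbf{Identifying the limit.} This is the genuinely different step. The paper argues that $\tilde\Psi_\infty$ is an effective action of $\tilde G^0\cong SO(3)$. Hence its image is isomorphic to $SO(3)$ and cannot be $SU(2)_L$ or $SU(2)_R$, each of which is isomorphic to $SU(2)$. This uses Proposition \ref{prop:action-classification}, and hence the Hsiang--Lawson input. It also uses effectiveness of the limit, which follows from local faithfulness and the trivial center of $SO(3)$ but is left implicit. You instead push the Case (B) non-transitivity to the limit, via lower semicontinuity of the rank of $X\mapsto X_\rho(\theta)$. This needs only $C^0$ convergence of the generators and no knowledge of the isomorphism type of $\tilde G^0$, so it is slightly more robust. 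It also makes your assertion that the image is isomorphic to $SO(3)$ unnecessary.
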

\begin{proof}
    By \cite{biquard-hein}*{Theorem B}, there is an ALE diffeomorphism $\Phi: M \setminus K \to (\R^4 \setminus B_R(0))/\Z_2$ of order 4 and class $C^{100}$ which maps CMC hypersurfaces to coordinate spheres (mod $\Z_2$).
    Lifting $\Phi$ gives an asymptotically Euclidean diffeomorphism $\tilde{\Phi}: \tilde{M}_\infty \to \R^4 \setminus B_R(0)$ of order 4 and class $C^{100}$ which maps CMC 3-spheres to coordinate spheres.
    
    Using $\tilde{\Phi}$, we define $\tilde{\Psi}$ and $\tilde{\Psi}_\rho$ as in the lemma. Let $\tilde{X}_1, \tilde{X}_2, \tilde{X}_3$ be fundamental Killing fields on $\R^4 \setminus B_R(0)$ generating the action $\tilde{\Psi}$ and satisfying the $\mathfrak{so}(3)$ bracket relations:
    \begin{align} \label{eq:06018}
        [\tilde{X}_1, \tilde{X}_2] = 2\tilde{X}_3, \quad [\tilde{X}_2, \tilde{X}_3] = 2\tilde{X}_1, \quad [\tilde{X}_3, \tilde{X}_1] = 2\tilde{X}_2.
    \end{align}
    Since $\{\tilde{X}_i\}_{i=1}^3$ are also fundamental Killing fields for the action of $\tilde{G}$ on $\tilde{M}_\infty$, which in turn lifts the action of $G$ on $M$, it follows that $\{\tilde{X}_i\}_{i=1}^3$ are lifts of fundamental Killing fields $X_1, X_2, X_3$ for the action of $G$ on $M$.
    
    Although $\Phi$ is not smooth, one checks that Proposition \ref{prop:exact-KFs} still holds, provided we only assert control up to say, 81 derivatives in parts (d) and (e). Indeed, parts (a)--(c) only require a small number of derivatives of ALE decay in \eqref{eq:ALE-diffeo}. Thus, Proposition \ref{prop:exact-KFs}(e) gives linearly independent Killing fields $\{X_{i,\infty}\}_{i=1}^3$ on $(\RP^3, \bar{g}_\infty)$ and $\epsilon \in (0,\frac{1}{2})$ such that the restrictions $X_{i,\rho} = X_i|_{\del B_\rho(0)}$ satisfy
    \begin{align}
        \lVert X_{i,\rho} - X_{i,\infty} \rVert_{C^{81}(\RP^3, \bar{g}_\infty)} \leq \O(\rho^{-3+\epsilon}).
    \end{align}
    Lifting, this gives linearly independent Killing fields $\{\tilde{X}_{i,\infty}\}_{i=1}^3$ on $(\Sph^3, \bar{g}_\infty)$ such that the restrictions $\tilde{X}_{i,\rho} = \tilde{X}_i|_{\del B_\rho(0)}$ satisfy
    \begin{align} \label{eq:Xtildei-convergence}
        \lVert \tilde{X}_{i,\rho} - \tilde{X}_{i,\infty} \rVert_{C^{81}(\Sph^3, \bar{g}_\infty)} \leq \O(\rho^{-3+\epsilon}).
    \end{align}
    In particular, the $\tilde{X}_{i,\infty}$ satisfy the same Lie algebra relations as \eqref{eq:06018}, so they generate a Lie subalgebra $\mathfrak{g}_\infty$ of $\mathfrak{I}(\Sph^3, \bar{g}_\infty) = \mathfrak{so}(4)$ isomorphic to $\mathfrak{so}(3)$. The convergence \eqref{eq:Xtildei-convergence}, combined with standard Gr\"onwall-type arguments, implies that the maps $\{\tilde{\Psi}_\rho\}_{\rho > R}$ are uniformly Cauchy in $C^{80}$. Hence, there is a smooth effective action $\tilde{\Psi}_\infty: \tilde{G}^0 \times \Sph^3 \to \Sph^3$ such that
    \begin{align} \label{eq:010968}
        \tilde{\Psi}_\rho \to \tilde{\Psi}_\infty \text{ in } C^{80}(\tilde{G}^0 \times \Sph^3).
    \end{align}
    Moreover, $\tilde{\Psi}_\infty$ acts by isometries with respect to $\bar{g}_\infty$. Thus $\tilde{\Psi}_\infty(\tilde{G}^0, \cdot)$ is a subgroup of $\I^0(\Sph^3, \bar{g}_\infty)$ isomorphic to $SO(3)$. By Lemma \ref{lem:subalg-classification-s3}, in particular the final line of Table \ref{table:2}, it follows that $\tilde{\Psi}_\infty(\tilde{G}^0, \cdot)$ acts by rotations of $\Sph^3 \subset \R^4$ fixing an axis. By composing $\tilde{\Psi}$ with a Euclidean rotation, we may assume the axis of rotation is the $w$-axis, i.e. it is spanned by $(0,0,0,1) \in \R^4$.
\end{proof}

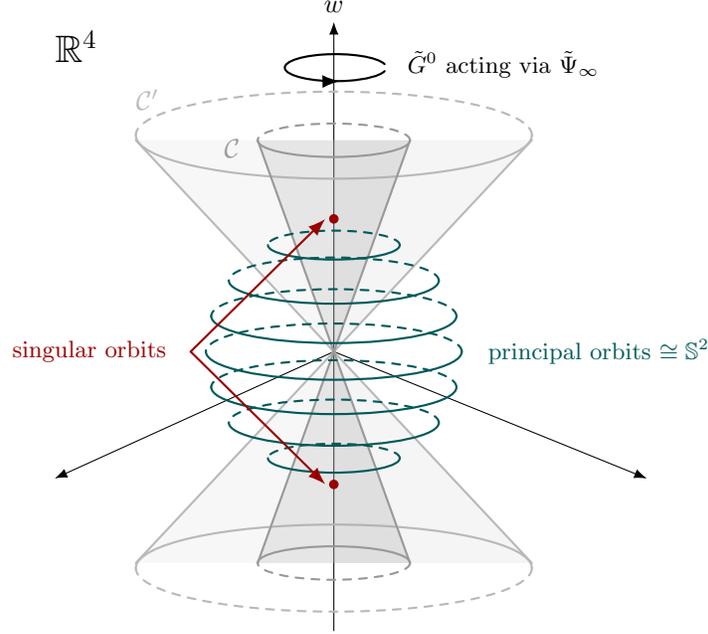
\begin{figure}[t]
    \centering
    \begin{tikzpicture}[scale=2.25, line cap=round, line join=round, >=Latex]

    \def\H{1.25}           
    \def\persp{0.22}       
    \def\xyshrink{0.72}    
    
    \pgfmathsetmacro{\kC}{0.5}
    \pgfmathsetmacro{\kCp}{1.3}
    
    \pgfmathsetmacro{\RC}{\xyshrink*\kC*\H}
    \pgfmathsetmacro{\RCp}{\xyshrink*\kCp*\H}
    \pgfmathsetmacro{\eC}{\persp*\RC}
    \pgfmathsetmacro{\eCp}{\persp*\RCp}
    
    \draw[->] (0,-1.65) -- (0,1.95) node[above] {$w$};
    
    \draw[->] (0,0) -- (1.85,-0.75);
    \draw[->] (0,0) -- (-1.65,-0.75);
    
    \def\Rrot{0.30}
    \def\Erot{0.08}
    \def\Wrot{1.70}

    \draw[
        postaction={decorate},
        decoration={markings, mark=at position 0.78 with {\arrow{Latex}}},
        thick
    ] (0,\Wrot) ++(\Rrot,0)
    arc[start angle=15,end angle=345, x radius=\Rrot, y radius=\Erot];

    \node[right] at (\Rrot+0.08,\Wrot) {\small $\tilde G^{0}$ acting via $\tilde{\Psi}_\infty$};

    \node[anchor=north west, font=\LARGE] at (-1.7,1.95) {$\mathbb{R}^4$};
    
    
    \path[fill=gray!35, opacity=0.22]
        (0,0) -- (\RCp,\H) -- (0,\H) -- cycle;
    \path[fill=gray!35, opacity=0.16]
        (0,0) -- (-\RCp,\H) -- (0,\H) -- cycle;
    
    \path[fill=gray!35, opacity=0.22]
        (0,0) -- (\RCp,-\H) -- (0,-\H) -- cycle;
    \path[fill=gray!35, opacity=0.16]
        (0,0) -- (-\RCp,-\H) -- (0,-\H) -- cycle;
    
    \draw[gray!55, thick]
        (0,\H+0.03) ++(\RCp,0) arc[start angle=0,end angle=-180, x radius=\RCp, y radius=\eCp];
    \draw[gray!55, thick, dashed]
        (0,\H+0.03) ++(\RCp,0) arc[start angle=0,end angle=180, x radius=\RCp, y radius=\eCp];
    \draw[gray!55, thick]
        (0,-\H-0.03) ++(\RCp,0) arc[start angle=0,end angle=180, x radius=\RCp, y radius=\eCp];
    \draw[gray!55, thick, dashed]
        (0,-\H-0.03) ++(\RCp,0) arc[start angle=0,end angle=-180, x radius=\RCp, y radius=\eCp];
    
    \draw[gray!55, thick]
        (0,0) -- (\RCp,\H) (0,0) -- (-\RCp,\H)
        (0,0) -- (\RCp,-\H) (0,0) -- (-\RCp,-\H);
    
    \node[gray!55] at (-1.10,\H+0.24) {$\mathcal{C}'$};
    
    
    \path[fill=gray!70, opacity=0.28]
        (0,0) -- (\RC,\H) -- (0,\H) -- cycle;
    \path[fill=gray!70, opacity=0.20]
        (0,0) -- (-\RC,\H) -- (0,\H) -- cycle;
    
    \path[fill=gray!70, opacity=0.28]
        (0,0) -- (\RC,-\H) -- (0,-\H) -- cycle;
    \path[fill=gray!70, opacity=0.20]
        (0,0) -- (-\RC,-\H) -- (0,-\H) -- cycle;
    
    \draw[gray!80, thick]
        (0,\H) ++(\RC,0) arc[start angle=0,end angle=-180, x radius=\RC, y radius=\eC];
    \draw[gray!80, thick, dashed]
        (0,\H) ++(\RC,0) arc[start angle=0,end angle=180, x radius=\RC, y radius=\eC];
    \draw[gray!80, thick]
        (0,-\H) ++(\RC,0) arc[start angle=0,end angle=180, x radius=\RC, y radius=\eC];
    \draw[gray!80, thick, dashed]
        (0,-\H) ++(\RC,0) arc[start angle=0,end angle=-180, x radius=\RC, y radius=\eC];
    
    \draw[gray!80, thick]
        (0,0) -- (\RC,\H) (0,0) -- (-\RC,\H)
        (0,0) -- (\RC,-\H) (0,0) -- (-\RC,-\H);
    
    \node[gray!80] at (-0.60,\H-0.05) {$\mathcal{C}$};
    

    \def\RSthree{1.05} 
    \def\wcompress{0.70} 
    \colorlet{orbitcol}{teal!70!black}
    \colorlet{singorbitcol}{red!60!black}

    \fill[singorbitcol] (0,\wcompress*\RSthree+0.05) circle (0.8pt);
    \fill[singorbitcol] (0,-\wcompress*\RSthree-0.05) circle (0.8pt);


    \foreach \wi in {-0.90,-0.60,-0.30,0.00,0.30,0.60,0.90}{
    \pgfmathsetmacro{\wdraw}{\wcompress*\wi}
    \pgfmathsetmacro{\rho}{\xyshrink*sqrt(\RSthree*\RSthree - \wi*\wi)}
    \pgfmathsetmacro{\erho}{\persp*\rho}

    \draw[orbitcol, thick] (0,\wdraw) ++(\rho,0)
        arc[start angle=0,end angle=-180, x radius=\rho, y radius=\erho];
    \draw[orbitcol, thick, dashed] (0,\wdraw) ++(\rho,0)
        arc[start angle=0,end angle=180, x radius=\rho, y radius=\erho];
    }

    \pgfmathsetmacro{\rhoZero}{\xyshrink*\RSthree}
    \node[orbitcol, right] at (\rhoZero+0.10,0)
        {\small principal orbits $\cong \Sph^2$};

    \node[singorbitcol, align=left] (singlab) at (-1.4,0)
    {\small singular orbits \;};

    \draw[singorbitcol,->, thick] (singlab.east) -- (-0.05,\wcompress*\RSthree+0.05);
    \draw[singorbitcol,->, thick] (singlab.east) -- (-0.05,-\wcompress*\RSthree-0.05);

    \end{tikzpicture}
    
    \caption{Limiting action $\tilde{\Psi}_\infty$ of $\tilde{G}^0 \cong SO(3)$ on $\R^4$, viewed with $w$ vertical and $x,y,z$ in the horizontal plane. The limiting action is by rotations fixing the $w$-axis; the actual action $\tilde{\Psi}$ of $\tilde{G}^0$ on $\tilde{M}_\infty$ is asymptotic to this. Examples of orbits, as well as the cones $\mathcal{C} = \{x^2 + y^2 + z^2 \leq w^2\}$ and $\mathcal{C}' = \{x^2 + y^2 + z^2 \leq 3w^2\}$ used in the proof of Corollary \ref{cor:drho-not-0}, are shown.}
    \label{fig:asymptotic-action}
\end{figure}

\begin{corollary} \label{cor:drho-not-0}
    Fix $\tilde{\Phi}: \tilde{M}_\infty \to \R^4 \setminus B_R(0)$ as in Lemma \ref{lem:limiting-action-Gtilde0}, and let
    \begin{align}
        \mathcal{C} = \{(x,y,z,w) \in \R^4: x^2 + y^2 + z^2 \leq w^2\}, \quad \Omega = (\R^4 \setminus B_R(0)) \setminus \mathcal{C}.
    \end{align}
    Then after increasing $R$ sufficiently (i.e. restricting the diffeomorphism $\tilde{\Phi}$), we have:
    \begin{enumerate}[label=(\alph*)]
        \item $\tilde{\Phi}^{-1}(\Omega) \subset \tilde{M}_\infty^{\mathrm{prin}}$.
        \item Let $\tilde{r}$ be the Riemannian distance from a point with respect to $\tilde{g}$. On $\tilde{\Phi}^{-1}(\Omega)$, we have $A^{-1}\tilde{r} \leq \xi \leq A\tilde{r}$ for some $A>0$, and $0 \neq |d\xi| = 1 + \O(\tilde{r}^{-4})$.
        \item Viewing $\tilde{\Phi}^{-1}(\Omega) \subset B \times \Sph^2$ via Proposition \ref{prop:prin-warped}, it contains an open set of the form $\Omega_B \times \Sph^2$, where $\Omega_B \subset B$ is an open neighborhood of a forward flow line of $\nabla\xi$.
    \end{enumerate}
\end{corollary}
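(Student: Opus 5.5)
The plan is to transfer everything to the limiting picture from Lemma \ref{lem:limiting-action-Gtilde0} and use the $C^{80}$ convergence $\tilde{\Psi}_\rho \to \tilde{\Psi}_\infty$ together with the order-4 decay of $\tilde{\Phi}^*\bar{g} - \tilde{g}$.

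\textbf{Part (a).} For the standard action $\tilde{\Psi}_\infty$ of $SO(3)$ fixing the $w$-axis, the only non-principal points of $\Sph^3$ are the poles $(0,0,0,\pm 1)$. On the compact set $\Sph^3 \setminus \rho^{-1}\mathcal{C}$, which is bounded away from the poles, the orbit map $\mathfrak{so}(3) \to T\Sph^3$ has rank $2$ with a uniform lower bound on its second singular value. By the $C^{1}$ closeness of $\tilde{\Psi}_\rho$ to $\tilde{\Psi}_\infty$, the same holds for $\tilde{\Psi}_\rho$ once $\rho$ is large. So every point of $\tilde{\Phi}^{-1}(\Omega)$ has a $2$-dimensional orbit, and hence a $1$-dimensional isotropy group.

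It remains to show this isotropy is $SO(2)$ and not $O(2)$. On each leaf the action has cohomogeneity one, and the singular orbits there are isolated. Only the principal orbits are $2$-dimensional with isotropy $SO(2)$, while exceptional orbits would have isotropy $O(2)$ and be $\RP^2$'s. These cannot occur in $\Sph^3$, as in Corollary \ref{cor:structure-coho2}. So $\tilde{\Phi}^{-1}(\Omega) \subset \tilde{M}^{\mathrm{prin}}_\infty$.

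\textbf{Part (b).} By Corollary \ref{cor:structure-coho2}, $\xi$ is the orbital radius: $4\pi\xi^2$ is the area of the orbit through the point. Under $\tilde{\Psi}_\infty$, the orbit through $x=(x',w)$ is a round sphere of Euclidean radius $|x'|$. On $\Omega$ we have $|x'| \ge |x|/\sqrt2$, which gives the two-sided comparison with $\tilde r$.

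The harder claim is $|d\xi| = 1 + \O(\tilde r^{-4})$, and I expect this to be the main obstacle. A plain convergence argument only gives $|d\xi| \to 1$, with an error no better than the action convergence rate $\rho^{-3+\epsilon}$. To get the $\tilde r^{-4}$ rate I would exploit the warped product structure of Proposition \ref{prop:prin-warped}. Ricci-flatness of $h + \xi^2 g_{\Sph^2}$ gives the fiber equation
\begin{align}
\xi \Delta_h \xi + |d\xi|_h^2 = 1
\end{align}
and the base equation $\nabla^2_h \xi = \frac{\xi}{2} K_h h$ (with $K_h$ the Gauss curvature of $h$ and the sign convention taken so that flat space has $\nabla^2_h\xi = 0$). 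Consequently
\begin{align}
d\bigl(|d\xi|_h^2\bigr) = \xi K_h\, d\xi.
\end{align}
The ALE decay gives $|\Rm| = \O(\tilde r^{-6})$, so $K_h = \O(\tilde r^{-6})$. Integrating along curves to infinity inside $\Omega$, where $|d\xi|^2 \to 1$, then yields $|d\xi|^2 - 1 = \O(\tilde r^{-4})$. This also shows $d\xi \neq 0$.

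\textbf{Part (c).} Orbits lie in the leaf $\tilde{\Phi}^{-1}(\Omega)$, and $\Omega$ is (asymptotically) a union of whole orbits. After slightly shrinking to the saturation of a smaller cone complement, for instance the complement of $\mathcal{C}' = \{x^2+y^2+z^2 \le 3w^2\}$, the set has the form $\Omega_B \times \Sph^2$.

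Next consider the flow line of $\nabla\xi$ starting at a point over the equator $w=0$, far out. Since $\nabla\xi$ is asymptotically the Euclidean radial vector in the $x'$ directions, and $|d\xi| \approx 1$, this flow line stays near $\{w=0\}$ with $\xi$ increasing linearly. Integrating the deviation, which is $\O(\tilde r^{-3+\epsilon})$, shows the angle to $\{w=0\}$ stays small. So the flow line remains in $\Omega$ forever. Since the error is integrable, a whole tubular neighborhood of this flow line stays in the saturation, which gives the required $\Omega_B$.
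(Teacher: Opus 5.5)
Your proposal is correct. Parts (a), (c), and the comparability $A^{-1}\tilde r\le\xi\le A\tilde r$ follow the paper's argument: you perturb off the limiting rotation action $\tilde\Psi_\infty$, then flow an orbit lying far inside the narrower region $\Omega'$ by $\nabla\xi$. Your explicit exclusion of $O(2)$ isotropy via the $\RP^2$ obstruction makes precise what the paper leaves implicit in (a). In (c), your description of $\nabla\xi$ as asymptotic to $(x'/|x'|,0)$ is more accurate than the paper's ``outward Euclidean radial field''. There, $C^0$ closeness alone keeps the flow line in $\Omega$, so no decay rate is needed.

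The genuine difference is the estimate $|d\xi|=1+\O(\tilde r^{-4})$, which you flag as the main obstacle. The paper gets it pointwise, with no integration. Tracing the base equation gives $\Delta^h\xi=\frac12 R_h\xi$ (in your notation, $\Delta_h\xi=\xi K_h$). Substituting this into the fiber equation gives the identity $|d\xi|^2=1-\frac12R_h\xi^2=1-K_h\xi^2$. Then $|R_h|\le C|\Rm_{\tilde g}|=\O(\tilde r^{-6})$ and $\xi\le A\tilde r$ finish immediately.

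Your route also works: differentiate to get $d(|d\xi|^2)=\xi K_h\,d\xi$ and integrate along rays in $\Omega$. But it needs an extra boundary input, namely $|d\xi|^2\to1$ at infinity along those rays. That requires $C^1$ control of $\xi$ transverse to the CMC spheres. It is available from the full four-dimensional estimates in Proposition~\ref{prop:exact-KFs}(e), but not from the sphere-by-sphere convergence $\tilde\Psi_\rho\to\tilde\Psi_\infty$ alone. Since the two equations you already wrote down contain the algebraic identity, you can drop the integration step and the limiting argument entirely.
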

\begin{proof}
    Let $\tilde{\Phi}$, $\tilde{\Psi}$, and $\tilde{\Psi}_\infty$ be as in Lemma \ref{lem:limiting-action-Gtilde0}. Moreover, extend $\tilde{\Psi}_\infty$ radially to act on $\R^4$; this is depicted in Figure \ref{fig:asymptotic-action}. By Lemma \ref{lem:limiting-action-Gtilde0}, the action $\tilde{\Psi}$ is asymptotic to $\tilde{\Psi}_\infty$.

    Note that all singular (i.e. non-principal) orbits of $\tilde{\Psi}_\infty$ lie on the $w$-axis. Therefore, outside a sufficiently large ball, all singular orbits of $\tilde{\Psi}$ are contained in a uniform tubular neighborhood of the $w$-axis, hence in $\mathcal{C}$. Thus, after increasing $R$ sufficiently, $\Omega$ contains only principal orbits. This establishes part (a) of the corollary.
    
    For each $p=(x,y,z,w) \in \R^4 \setminus B_R(0)$, the orbit of $p$ under $\tilde{\Psi}_\infty$ is a round $\Sph^2$ of radius $\xi_\infty(p) = \sqrt{x^2 + y^2 + z^2}$. In particular, along any outward Euclidean ray not parallel to the $w$-axis, the orbit radius $\xi_\infty$ increases at a definite linear rate. Therefore, there exists $A>0$ such that $A^{-1}|p| \leq \xi_\infty(p) \leq A|p|$ for all $p \in \Omega$. Thus, the same is true for the orbit radius function $\xi$ of $\tilde{\Psi}$ outside a sufficiently large ball. This proves the first claim of part (b). As for the second claim, we calculate the vertical and horizontal components of $\Ric = 0$ on $(\tilde{M}_\infty^{\mathrm{prin}} \cong B \times \Sph^2, \tilde{g} = h + \xi^2 g_{\Sph^2})$ to be
    \begin{align}
        \Ric_h &= 2\xi^{-1} (\nabla^h)^2\xi, \label{eq:RicH} \\
        \xi \Delta^h \xi + |d\xi|^2 &= 1. \label{eq:RicV}
    \end{align}
    (See e.g. \cite{besse}*{Proposition 9.106}.)
    Tracing \eqref{eq:RicH} with $h$ gives $\Delta^h \xi = \frac{1}{2} R_h \xi$, where $R_h$ is the scalar curvature of $h$. Plugging this into \eqref{eq:RicV} yields
    \begin{align}
        |d\xi|^2 = 1 - \frac{1}{2} R_h \xi^2.
    \end{align}
    Note that $|R_h| \leq C|{\Rm}_{\tilde{g}}|_{\tilde{g}} \leq \O(\tilde{r}^{-6})$ by Theorem \ref{thm:ALE-decay-rate}. Therefore, using the fact that $A^{-1}\tilde{r} \leq \xi \leq A\tilde{r}$ on $\tilde{\Phi}^{-1}(\Omega)$, we obtain
    \begin{align}
        |d\xi|^2 = 1 + \O(\tilde{r}^{-4}),
    \end{align}
    which proves the second claim of (b).

    Finally, let $\Omega' = (\R^4 \setminus B_R(0)) \setminus \mathcal{C}' \subset \Omega$, where $\mathcal{C}'$ is the widened cone
    \begin{align}
        \mathcal{C}' = \{(x,y,z,w) \in \R^4 : x^2 + y^2 + z^2 \leq 3w^2 \}.
    \end{align}
    Observe that $\Omega'$ contains $\Sph^2$ orbits of $\tilde{\Psi}$ which can be chosen arbitrarily far from the origin.
    Take one such orbit, and flow it forwards by the vector field $\nabla\xi$, which behaves like the outward Euclidean radial vector field near infinity. If the orbit was chosen sufficiently far from the origin, then its image under the flow never exits $\Omega$.
    Part (c) follows from these observations.
\end{proof}

Corollary \ref{cor:drho-not-0} allows us to use $\xi$ as a coordinate on $\Omega_B \times \Sph^2$. Since $\xi$ is constant on $\Sph^2$, it descends to a coordinate on $\Omega_B$. We now complete this to a full coordinate system using well-known arguments from the study of spherically symmetric spacetimes.
\begin{proposition} \label{prop:birkhoff}
    There is a domain $\Omega_B' \subset B$ which contains a forward flow line of $\nabla\xi$, and admits a coordinate system $(\xi, t): \Omega_B' \to \R^2$ mapping onto an open neighborhood $D \subset \R^2$ of $(R,\infty) \times \{0\}$ (for some large $R$). In this coordinate system, the metric $h$ can be written
    \begin{align} \label{eq:h-xi-t}
        h = E(\xi) d\xi^2 + F(\xi) dt^2
    \end{align}
    for some smooth positive functions $E$ and $F$.
\end{proposition}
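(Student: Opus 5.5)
Recall from the proof of Corollary \ref{cor:drho-not-0} that on $B$, Ricci-flatness of $\tilde{g} = h + \xi^2 g_{\Sph^2}$ gives \eqref{eq:RicH} and \eqref{eq:RicV}. In dimension two, $\Ric_h = \frac{1}{2}R_h h$, so \eqref{eq:RicH} becomes
\begin{align}
    (\nabla^h)^2 \xi = \tfrac{1}{4} R_h \xi \, h .
\end{align}
Thus the Hessian of $\xi$ is pure trace. The classical Birkhoff argument uses this to show that the rotated gradient is a Killing field. On the region $\Omega_B$ from Corollary \ref{cor:drho-not-0}(c) we have $d\xi \neq 0$. Fix an orientation there (possible since $B \cong \R^2$), let $J$ be rotation by $\pi/2$ with respect to $h$, and set $K := J\nabla^h \xi$. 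Since $J$ is parallel, $\nabla K = J \circ (\nabla^h)^2\xi$, which is $\frac14 R_h\xi\, J$ and hence skew. So $K$ is a Killing field of $(B,h)$. It is nonvanishing on $\Omega_B$, orthogonal to $\nabla^h\xi$, and satisfies $K(\xi) = 0$.

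Next I would show that $|d\xi|^2$ is a function of $\xi$ alone. From the trace-free Hessian, $d|d\xi|^2 = 2(\nabla^h)^2\xi(\nabla\xi,\cdot) = \frac12 R_h\xi\, d\xi$, so $d|d\xi|^2 \wedge d\xi = 0$. Near a flow line of $\nabla\xi$ the level sets of $\xi$ are connected arcs, which I would arrange by shrinking $\Omega_B$ to a thin tube. Hence $|d\xi|^2 = V(\xi)$ for a smooth positive function $V$, positive by Corollary \ref{cor:drho-not-0}(b). Since $R_h$ is determined by $V$ via this identity, $R_h$ and $|K|^2 = V(\xi)$ are likewise functions of $\xi$.

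To build the coordinates, pick a forward flow line $\gamma$ of $\nabla\xi$ inside $\Omega_B$, parametrized by $\xi \in (R,\infty)$. Define $t$ by flowing $\gamma$ along the Killing field $K$ for time $t$, and set $\Omega_B'$ to be the image of $\{(\xi,t) : |t| < \delta(\xi)\}$ for a suitable positive continuous $\delta$. Because $K$ preserves $\xi$ and is transverse to $\gamma$, the map $(\xi,t)$ is a local diffeomorphism. Choosing $\delta$ small, it is injective: $K$-orbits are exactly the level sets of $\xi$, and on a short arc of a level set the flow time is injective. Its image $D$ is an open neighborhood of $(R,\infty)\times\{0\}$. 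In these coordinates $\del_t = K$ is Killing, so the metric coefficients are independent of $t$, and $h(\del_t,\del_t) = V(\xi) =: F(\xi)$. Since the flow of $K$ preserves both $h$ and $\xi$, it preserves $\nabla\xi$, so $[K,\nabla\xi] = 0$. Therefore $\del_\xi$, which at points of $\gamma$ equals $\nabla\xi/|\nabla\xi|^2$, equals $\nabla\xi/|\nabla\xi|^2$ everywhere. This is orthogonal to $\del_t$, and $h(\del_\xi,\del_\xi) = 1/V(\xi) =: E(\xi)$. This gives \eqref{eq:h-xi-t}.

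The main obstacle is global bookkeeping rather than the local computation. I must make sure that the $K$-flow stays inside $\Omega_B$ for short times uniformly along the infinite ray; the flow is complete on $B$ only near the ray, so this is where $\delta(\xi)$ comes in. I must also make sure that $\xi$ level sets in the chosen domain are connected, so that $V$ is well defined. Both should follow from Corollary \ref{cor:drho-not-0}, which gives $|d\xi| = 1+\O(\tilde{r}^{-4})$ and the asymptotically Euclidean picture, by choosing $\delta(\xi)$ small relative to the distance from $\gamma$ to $\partial\Omega_B$.
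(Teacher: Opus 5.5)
Your proposal is correct and follows essentially the paper's route: the rotated gradient $K=J\nabla^h\xi$ (the paper's $(\star_h d\xi)^\sharp$) is a Killing field by the local Birkhoff argument, and coordinates $(\xi,t)$ with $\partial_t=K$ and $\partial_\xi=\nabla\xi/|\nabla\xi|^2$ put $h$ in orthogonal, $t$-independent form. The differences are only in execution. You verify the Killing property directly from the pure-trace Hessian instead of citing An--Wong, and you build $t$ globally via $(\xi,t)\mapsto\phi^K_t(\gamma(\xi))$ on $\{|t|<\delta(\xi)\}$ rather than patching local flow coordinates; this also gives $F=|d\xi|^2=1/E$ immediately, and makes your concern about connected level sets unnecessary, since $t$-independence already follows from $\partial_t=K$ being Killing.
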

\begin{proof}
    By Proposition \ref{prop:prin-warped}, Ricci-flatness, and a local version of Birkhoff's theorem (e.g. \cite{an-wong}*{Proposition 2.1}),
    $K := (\star_h d\xi)^\sharp$ is a Killing field on $(B,h)$ satisfying $\inner{K}{\nabla\xi} = 0$. Since $d\xi$ is nonvanishing on the set $\Omega_B \subset B$ from Corollary \ref{cor:drho-not-0}, so is $K$.

    By Corollary \ref{cor:drho-not-0}(c), $\Omega_B$ contains a forward flow line $\gamma$ of $\nabla\xi$. Around each point of $\gamma$, we can use the flow of $K$ to construct a local coordinate $t$ such that $K = \del_t$ and $\nabla\xi(t) = 0$. If two such coordinates $t_\alpha, t_\beta$ are defined on overlapping patches, then $u = t_\alpha - t_\beta$ satisfies $Ku = \nabla\xi(u) = 0$ on the overlap, which we may assume to be connected. Therefore, $u$ is a constant, so by adding a constant to $t_\beta$, it can be made to agree with $t_\alpha$ on the overlap.
    Therefore, there is a coordinate system $(\xi,t)$ for a neighborhood $\Omega_B'$ of $\gamma$ which maps to a neighborhood of $(R,\infty) \times \{0\}$ in $\R^2$ (for some large $R$). Since $\inner{\del_\xi}{\del_t} = \inner{\frac{\nabla\xi}{|\nabla\xi|^2}}{K} = 0$, we can write
    \begin{align}
        h = E(\xi,t) d\xi^2 + F(\xi,t) dt^2
    \end{align}
    for some positive functions $E$ and $F$. Since $K$ is a Killing field, $E$ and $F$ are independent of $t$.
\end{proof}

\begin{proposition} \label{prop:g-flat}
    There is a nonempty open set $U \subset \tilde{M}_\infty^{\mathrm{prin}}$ on which $\tilde{g}$ is flat.
\end{proposition}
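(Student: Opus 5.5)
Working in the coordinates of Proposition \ref{prop:birkhoff}, I will write the metric on $\Omega_B' \times \Sph^2$ as
\begin{align}
    \tilde{g} = E(\xi)\, d\xi^2 + F(\xi)\, dt^2 + \xi^2 g_{\Sph^2}.
\end{align}
I will then solve the Ricci-flat equations explicitly, as in the classical Birkhoff/Schwarzschild computation, and show that the ALE decay forces the mass parameter to vanish.

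First, the component \eqref{eq:RicV} gives $\xi\Delta^h\xi + |d\xi|^2 = 1$. Here $|d\xi|_h^2 = 1/E$ and
\begin{align}
    \Delta^h \xi = (EF)^{-1/2}\,\partial_\xi\!\left( (EF)^{1/2} E^{-1} \right).
\end{align}
The component \eqref{eq:RicH} in the $d\xi\,dt$ and $dt^2$ directions gives the standard relation $(EF)' = 0$. After rescaling $t$ by a constant, this yields $EF = 1$. Substituting into \eqref{eq:RicV} gives $\xi(1/E)' + 1/E = 1$, so $(\xi/E)' = 1$ and
\begin{align}
    F = \frac{1}{E} = 1 - \frac{2\mu}{\xi}
\end{align}
for a constant $\mu \in \R$. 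This is the Euclidean Schwarzschild form $\tilde{g} = (1-2\mu/\xi)^{-1}d\xi^2 + (1-2\mu/\xi)\,dt^2 + \xi^2 g_{\Sph^2}$, valid on $D\times\Sph^2$ for large $\xi$.

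Next I will show $\mu = 0$. By Corollary \ref{cor:drho-not-0}(b), $|d\xi|^2 = 1/E = 1 + \O(\tilde r^{-4})$. On the flow line, $\xi$ is comparable to $\tilde r$, so $2\mu/\xi = \O(\xi^{-4})$ as $\xi \to \infty$. This forces $\mu = 0$. As an alternative check, I could compute $|\Rm|$ of the Schwarzschild metric, which is of order $|\mu|\xi^{-3}$, and compare it with the $\O(\tilde r^{-6})$ curvature decay from Theorem \ref{thm:ALE-decay-rate}.

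With $\mu = 0$ we get $E = F = 1$, so $\tilde g = d\xi^2 + dt^2 + \xi^2 g_{\Sph^2}$. This is flat $\R^4$ written as $\R_t \times \R^3$ in spherical coordinates on the $\R^3$ factor. Hence $\tilde g$ is flat on $U := \Omega_B' \times \Sph^2$, which is open and nonempty.

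The main obstacle is the step $(EF)'=0$. It requires computing the off-diagonal and $tt$ components of $\Ric_h - 2\xi^{-1}(\nabla^h)^2\xi$ for the metric \eqref{eq:h-xi-t}. The key identity is that $(\nabla^h)^2\xi(\partial_t,\partial_t) = -\tfrac12 E^{-1}F'$ and $(\nabla^h)^2\xi(\partial_\xi,\partial_\xi) = -\tfrac12 E'/E$. Since $h$ is 2-dimensional, $\Ric_h = \tfrac12 R_h h$. Comparing the $\xi\xi$ and $tt$ equations after dividing by $E$ and $F$ respectively gives $E'/E = -F'/F$, which is exactly $(EF)' = 0$.
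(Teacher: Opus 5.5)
Your proposal is correct and essentially follows the paper's argument. The Hessian equation \eqref{eq:RicH} gives $(EF)'=0$, the spherical component \eqref{eq:RicV} then gives $F=1/E=1-2\mu/\xi$, and the decay forces $\mu=0$. For that last step you use the gradient bound $|d\xi|^2=1+\O(\tilde r^{-4})$ from Corollary \ref{cor:drho-not-0}(b) instead of the Schwarzschild curvature asymptotics; this is a harmless, slightly more self-contained variant, since that bound was itself derived from $|d\xi|^2=1-\tfrac12 R_h\xi^2$ and the same curvature decay.

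One sign slip needs fixing. You have $(\nabla^h)^2\xi(\partial_t,\partial_t)=-\tfrac{F'}{2E}$, but in fact $(\nabla^h)^2\xi(\partial_t,\partial_t)=-\Gamma^{\xi}_{tt}=+\tfrac{F'}{2E}$. With the sign you wrote, comparing the $\xi\xi$ and $tt$ components would give $(E/F)'=0$, not the $(EF)'=0$ you correctly state.
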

\begin{proof}
    Propositions \ref{prop:prin-warped} and \ref{prop:birkhoff} imply that
    \begin{align} \label{eq:tilde-g-form}
        \tilde{g} = h + \xi^2 g_{\Sph^2} = E(\xi) d\xi^2 + F(\xi) dt^2 + \xi^2 g_{\Sph^2} \quad \text{on } D \times \Sph^2,
    \end{align}
    where $D(\xi,t) \subset \R^2$ is an open neighborhood of $(R,\infty) \times \{0\}$. Rudimentary computations yield
    \begin{align} \label{eq:hessh-xi}
        ((\nabla^h)^2 \xi)_{\xi\xi} = -\frac{E'}{2E}, \quad ((\nabla^h)^2 \xi)_{tt} = \frac{F'}{2E}.
    \end{align}
    Meanwhile, since $B$ is 2-dimensional, \eqref{eq:RicH} gives
    \begin{align}
        (\nabla^h)^2 \xi = \frac{1}{2} \xi \Ric_h = \frac{1}{4} \xi R_h h.
    \end{align}
    Combining this with \eqref{eq:hessh-xi} and then isolating $\frac{1}{2} \xi R_h$ in both equations, we obtain
    \begin{align}
        \frac{F'}{EF} = \frac{1}{2} \xi R_h = -\frac{E'}{E^2}.
    \end{align}
    This implies $(\log(EF))' = 0$, so $E = \frac{c}{F}$ for some $c > 0$. Thus, additionally defining $V(\xi) = F(\xi)/c$ and $\tau = \sqrt{c}t$, \eqref{eq:tilde-g-form} becomes
    \begin{align} \label{eq:tilde-g-form-2}
        \tilde{g} = \frac{1}{V(\xi)} d\xi^2 + V(\xi) d\tau^2 + \xi^2 g_{\Sph^2} \quad \text{on } D' \times \Sph^2,
    \end{align}
    where $D'(\xi,\tau) \subset \R^2$ is an open neighborhood of $(R,\infty) \times \{0\}$. Next, using \cite{besse}*{Proposition 9.106}, along the spherical direction we have
    \begin{align} \label{eq:Ricg2}
        {\Ric_{\tilde{g}}}|_{T\Sph^2} &= (1 - \xi \Delta^h \xi - |\nabla\xi|^2) g_{\Sph^2}.
    \end{align}
    Straightforward computation using \eqref{eq:tilde-g-form-2} gives $|\nabla\xi|^2 = V$ and $\Delta^h\xi = V'$. Inserting this into \eqref{eq:Ricg2}, imposing ${\Ric_{\tilde{g}}}|_{T\Sph^2} = 0$, and solving the resulting ODE gives
    \begin{align}
        V(\xi) = 1 - \frac{2\mu}{\xi} \quad \text{for some } \mu \in \R.
    \end{align}
    Thus, $\tilde{g}$ is the Euclidean Schwarzschild metric with mass $\mu$ on $D' \times \Sph^2$. Suppose $\mu \neq 0$. If we fix some $\theta \in \Sph^2$ and consider the curve $\xi \mapsto (\xi,0,\theta) \in D' \times \Sph^2$, then $|{\Rm}_{\tilde{g}}| \asymp \xi^{-3}$ as $\xi \to \infty$ (a standard fact about Euclidean Schwarzschild metrics with nonzero mass). By Corollary \ref{cor:drho-not-0}, $\xi$ is comparable to the Riemannian distance $\tilde{r}$ from a point with respect to $\tilde{g}$. Thus, $|{\Rm}_{\tilde{g}}| \asymp \tilde{r}^{-3}$ along the same curve. However, recall from the discussion after Corollary \ref{cor:Gdim} that $\tilde{g}$ is asymptotically Euclidean of order 4. So $|{\Rm}_{\tilde{g}}| \leq \O(\tilde{r}^{-6})$, but this is a contradiction. Therefore $\mu=0$, which makes $\tilde{g}$ flat in $D' \times \Sph^2$.
\end{proof}

We may now finish off Case (B). By Proposition \ref{prop:g-flat}, the downstairs metric $g$ is flat in an open set of $M$. Since Ricci-flat metrics are real-analytic, unique continuation implies that $(M,g)$ is flat everywhere. Hence, $(M,g)$ must be the flat orbifold $\R^4/\Z_2$. This concludes Case (B) and hence the proof of Theorem \ref{thm:EH-characterization}.

\section{A characterization of Calabi space} \label{sec:calabi-result}

In this section, we prove Theorem \ref{thm:calabi-characterization}.

\subsection{Homogeneous metrics on $\Sph^{2m-1}/\Z_m$ and isometry groups}

We begin by importing several facts from \cite{law}*{\S 5} regarding homogeneous metrics on $\Sph^{2m-1}/\Z_m$ and their isometry groups. Recall from \S\ref{subsec:calabi} that $\Sph^{2m-1}/\Z_m$ is the lens space obtained by quotienting $\Sph^{2m-1}$ by a $\Z_m$ subgroup of Hopf rotations.

\begin{lemma}[\cite{law}*{Lemma 5.3}] \label{lem:law-5.3}
    If $\mathfrak{g}$ is a Lie subalgebra of $\mathfrak{u}(m)$ satisfying
    \begin{align}
        \dim\mathfrak{g} \geq m^2 - 2m + 3,
    \end{align}
    then either $\mathfrak{g} = \mathfrak{su}(m)$ or $\mathfrak{g} = \mathfrak{u}(m)$. If $m=4$ we also allow $\mathfrak{g} = \mathfrak{sp}(2) \oplus \mathfrak{u}(1)$ (up to conjugacy in $\mathfrak{u}(4)$).
\end{lemma}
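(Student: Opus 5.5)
The plan is to classify Lie subalgebras $\mathfrak{g} \subseteq \mathfrak{u}(m)$ of large dimension through the structure theory of compact Lie algebras. Since $\mathfrak{g}$ is a subalgebra of the compact algebra $\mathfrak{u}(m)$, it is reductive: $\mathfrak{g} = \mathfrak{z} \oplus [\mathfrak{g},\mathfrak{g}]$. Its semisimple part $\mathfrak{s} = [\mathfrak{g},\mathfrak{g}]$ lies in $\mathfrak{su}(m)$, and $\dim \mathfrak{z} \leq m$. Moreover, $\mathfrak{g}$ acts on $\C^m$ by a unitary representation. We split into two cases according to whether this representation is irreducible.

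\textbf{Reducible case.} If $\C^m = V_1 \oplus V_2$ is a $\mathfrak{g}$-invariant splitting with $\dim_\C V_1 = k$ and $1 \leq k \leq m-1$, then $\mathfrak{g} \subseteq \mathfrak{u}(k) \oplus \mathfrak{u}(m-k)$. This gives
\begin{align}
    \dim\mathfrak{g} \leq k^2 + (m-k)^2 \leq 1 + (m-1)^2 = m^2 - 2m + 2,
\end{align}
since the maximum over the allowed range occurs at $k=1$ or $k=m-1$. This contradicts the hypothesis $\dim\mathfrak{g} \geq m^2-2m+3$, so this case cannot occur.

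\textbf{Irreducible case.} Now suppose $\mathfrak{g}$ acts irreducibly on $\C^m$. By Schur's lemma, the center $\mathfrak{z}$ acts by scalars, so $\dim \mathfrak{z} \leq 1$ and $\mathfrak{z} \subseteq \mathfrak{u}(1)\cdot I$. Hence $\dim\mathfrak{s} \geq m^2-2m+2$, and $\mathfrak{s} \subseteq \mathfrak{su}(m)$ acts irreducibly (irreducibility is unaffected by adding scalars). It therefore suffices to show that a proper semisimple subalgebra $\mathfrak{s} \subsetneq \mathfrak{su}(m)$ acting irreducibly on $\C^m$ has dimension at most $m^2-2m+1$, except for $\mathfrak{sp}(2) \subset \mathfrak{su}(4)$. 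A proper irreducible $\mathfrak{s}$ is contained in a maximal irreducible proper subalgebra of $\mathfrak{su}(m)$. By Dynkin's classification, or by elementary means, these are:
\begin{enumerate}[label=(\roman*)]
    \item $\mathfrak{so}(m)$, of dimension $m(m-1)/2$;
    \item $\mathfrak{sp}(m/2)$, of dimension $m(m+1)/2$, when $m$ is even;
    \item tensor product algebras $\mathfrak{su}(a)\oplus\mathfrak{su}(b)$ with $ab=m$ and $a,b \geq 2$, of dimension $a^2+b^2-2$;
    \item simple algebras acting by other irreducible representations, which have much smaller dimension.
\end{enumerate}
For (iii), with $2 \leq a \leq b$ we have $a^2+b^2-2 \leq 4 + m^2/4 - 2$, which is below $m^2-2m+2$ for $m \geq 4$. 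For (i) and (ii), we compare $m(m+1)/2 \geq m^2-2m+2$. This is equivalent to $m^2-5m+4 \leq 0$, which holds only for $1 \leq m \leq 4$. Among even $m$ in this range, equality $10 = 10$ occurs at $m=4$, giving $\mathfrak{sp}(2)$. At $m=2$ we have $\mathfrak{sp}(1) = \mathfrak{su}(2)$, which is not proper. For $\mathfrak{so}(m)$ the analogous inequality $m(m-1)/2 \geq m^2-2m+2$ fails for every $m$. Adding back the center, we get $\mathfrak{g} \in \{\mathfrak{su}(m), \mathfrak{u}(m)\}$, or at $m=4$ the algebra $\mathfrak{sp}(2)$ or $\mathfrak{sp}(2)\oplus\mathfrak{u}(1)$. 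Of these, $\mathfrak{sp}(2)$ alone has dimension $10 < 11 = m^2-2m+3$, so only $\mathfrak{sp}(2)\oplus\mathfrak{u}(1)$ survives.

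\textbf{Main obstacle.} The hard step is bounding the dimension of irreducible proper semisimple subalgebras in case (iv) without invoking the full Dynkin classification. To handle it, I would first use the known result that a proper subgroup of $SU(m)$ of dimension greater than roughly $m^2/2$ acting irreducibly must preserve a bilinear form; this forces case (i) or (ii). For small $m$, where such general bounds are weak, I would check by hand, using the list of low-dimensional irreducible representations of simple Lie algebras.
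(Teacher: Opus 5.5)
\textbf{The proof has a gap at item (iv).} Everything else checks out: the reductive splitting, the bound $k^2+(m-k)^2\le m^2-2m+2$ in the reducible case, Schur's lemma giving $\dim\mathfrak{z}\le 1$, the tensor-product bound in (iii), and the inequalities for $\mathfrak{so}(m)$ and $\mathfrak{sp}(m/2)$. (The paper only imports this lemma from Law's earlier work without reproducing a proof, so there is no in-paper argument to compare routes with.)

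The problem is (iv). The claim that simple subalgebras acting by ``other'' irreducible representations have ``much smaller dimension'' is exactly the nontrivial content of the lemma, and you assert it rather than prove it. Your proposed remedy does not close it. The ``known result'' that an irreducible proper subgroup of $SU(m)$ of dimension more than roughly $m^2/2$ preserves a bilinear form is plausible, but it is stated vaguely and without a reference. Proving it requires the same case analysis you are trying to avoid. ``Checking small $m$ by hand'' also leaves unspecified where ``small'' ends.

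\textbf{How to close it.} The step can be closed cleanly, in a way that makes Dynkin's classification unnecessary. Let $\mathfrak{s}$ be simple with a faithful irreducible representation on $\C^m$, with $\mathfrak{s}\neq\mathfrak{su}(m)$. Let $d$ be the least dimension of a nontrivial representation of $\mathfrak{s}$.
\begin{itemize}
\item Since $\mathfrak{s}$ embeds in $\mathfrak{su}(d)$, we have $\dim\mathfrak{s}\le d^2-1$. So if $m\ge d+1$, then $\dim\mathfrak{s}\le (m-1)^2-1<m^2-2m+2$.
\item If $m=d$, read off the standard table of minimal dimensions:
  \begin{itemize}
  \item $A_n$ ($d=n+1$): the image is all of $\mathfrak{su}(m)$, which is excluded.
  \item $B_n$ ($n\ge 3$) and $D_n$ ($n\ge 4$): $\dim\mathfrak{s}=m(m-1)/2$, which is always below the threshold.
  \item $C_n$ ($n\ge 2$): $\dim\mathfrak{s}=m(m+1)/2$, which is $<m^2-2m+2$ unless $n=2$.
  \item $G_2,F_4,E_6,E_7,E_8$: $(d,\dim\mathfrak{s})=(7,14),(26,52),(27,78),(56,133),(248,248)$, all far below the threshold.
  \end{itemize}
\end{itemize}
This handles your items (i), (ii) and (iv) at once. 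The only simple exception is $\mathfrak{sp}(2)\subset\mathfrak{su}(4)$.

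Combine this with your bound (iii) for non-simple $\mathfrak{s}$. There, irreducible representations of $\mathfrak{s}_1\oplus\mathfrak{s}_2$ are $V_1\otimes V_2$, and faithfulness forces both factors to have dimension at least $2$. This finishes the irreducible case.

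Finally, for the clause ``up to conjugacy in $\mathfrak{u}(4)$'', you should note two facts. First, $\mathfrak{sp}(2)$ has a unique $4$-dimensional irreducible representation up to equivalence. Second, equivalent unitary representations are unitarily equivalent.
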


For the next result, recall the definition of $d_1$ stated before Lemma \ref{lem:exact-KF-subalgebra}. As before, $\bar{g}_\infty$ denotes the metric of constant sectional curvature 1.
\begin{proposition}[\cite{law}*{Proposition 5.4}] \label{prop:law-5.4}
    We have $d_1(\Sph^{2m-1}/\Z_m, \bar{g}_\infty) \geq 2m-3$.
\end{proposition}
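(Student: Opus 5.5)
The plan is to compute $\bar{d} = \dim \I(\Sph^{2m-1}/\Z_m, \bar{g}_\infty)$ and identify the Lie algebra $\mathfrak{I}(\Sph^{2m-1}/\Z_m, \bar{g}_\infty)$. Then we show directly that every Lie subalgebra of dimension $\geq \bar{d} - (2m-3)$ spans the tangent space at every point. By the definition of $d_1$, this gives $d_1 \geq 2m-3$.

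\emph{Step 1: the isometry algebra.} The round lens space is the quotient of $(\Sph^{2m-1}, \bar{g}_\infty)$ by the cyclic group $\Z_m$ generated by multiplication by $e^{2\pi i/m}$, acting freely. By Lemma \ref{lem:quotient-isom}, its isometry group is $N_{O(2m)}(\Z_m)/\Z_m$. By Lemma \ref{lem:CN-equality}, the identity component is $C^0_{O(2m)}(\Z_m)/\Z_m$.
\begin{itemize}
\item For $m \geq 3$, the generator is a real linear map whose eigenvalues $e^{\pm 2\pi i/m}$ are non-real. Its centralizer in $O(2m)$ therefore consists of the orthogonal maps commuting with the complex structure $J$ it determines, namely $U(m)$.
\item Hence $\mathfrak{I}(\Sph^{2m-1}/\Z_m, \bar{g}_\infty) \cong \mathfrak{u}(m)$ and $\bar{d} = m^2$.
\end{itemize}
(This matches the description of the isometry groups of quotient-Berger metrics in \S\ref{subsec:calabi} with $a=b$.)

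\emph{Step 2: classifying large subalgebras.} Let $\mathfrak{g} \subset \mathfrak{u}(m)$ be a Lie subalgebra with
\begin{align}
\dim \mathfrak{g} \geq m^2 - (2m-3) = m^2 - 2m + 3.
\end{align}
Lemma \ref{lem:law-5.3} gives $\mathfrak{g} = \mathfrak{su}(m)$ or $\mathfrak{g} = \mathfrak{u}(m)$. When $m = 4$, there is the additional possibility that $\mathfrak{g}$ is conjugate to $\mathfrak{sp}(2) \oplus \mathfrak{u}(1)$. Conjugation by an element of $\I^0$ preserves the property of spanning the tangent space everywhere, so it suffices to treat these representatives.

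\emph{Step 3: transitivity.} The connected subgroups with these Lie algebras act transitively on $\Sph^{2m-1}$, and hence on the quotient $\Sph^{2m-1}/\Z_m$.
\begin{itemize}
\item $SU(m)$ acts transitively on $\Sph^{2m-1}$ for $m \geq 2$, with isotropy $SU(m-1)$.
\item $U(m)$ contains $SU(m)$, so it is also transitive.
\item $Sp(2)$ acts transitively on the unit sphere $\Sph^7 \subset \mathbb{H}^2$, with isotropy $Sp(1)$.
\end{itemize}
For a connected group acting transitively, its orbit through any point is open, and the fundamental vector fields of its Lie algebra span the tangent space there. Thus each such $\mathfrak{g}$ spans $T_x(\Sph^{2m-1}/\Z_m)$ at every $x$, which proves the claim.

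The main obstacle is Step 1: one must make sure the identity component of the isometry group is exactly $U(m)/\Z_m$, so that the relevant ambient algebra is $\mathfrak{u}(m)$ and Lemma \ref{lem:law-5.3} applies. The key inputs are $m \geq 3$, so that the generator is not $\pm 1$, and the normalizer/centralizer identity. Once this is settled, everything else is immediate from Lemma \ref{lem:law-5.3}.
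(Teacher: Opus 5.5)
Your proposal is correct and matches the argument behind this imported result: you identify $\mathfrak{I}(\Sph^{2m-1}/\Z_m,\bar g_\infty)\cong\mathfrak{u}(m)$, so that $\bar d=m^2$ (the paper uses the same identification in Corollary~\ref{cor:SUm-action}). You then apply Lemma~\ref{lem:law-5.3} at threshold $m^2-(2m-3)$ and check that $SU(m)$, $U(m)$, and, for $m=4$, the group with Lie algebra $\mathfrak{sp}(2)\oplus\mathfrak{u}(1)$ all act transitively. One small correction to your parenthetical: \S\ref{subsec:calabi} states the Berger isometry groups only for $a\neq b$, so in the round case it is your centralizer computation for $m\geq 3$, not that remark, that justifies Step 1.
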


Next, we recall the quotient-Berger metrics $\{g_{a,b}\}_{a,b>0}$ on $\Sph^{2m-1}/\Z_m$ defined in \S\ref{subsec:calabi}. The following result characterizes such metrics on $\Sph^{2m-1}/\Z_m$ in terms of isometry group dimensions.
\begin{proposition}[\cite{law}*{Proposition 5.5}] \label{prop:law-5.5}
    If $g$ is a homogeneous metric on $\Sph^{2m-1}/\Z_m$ with
    \begin{align}
        \dim \I(\Sph^{2m-1}/\Z_m, g) \geq \begin{cases}
            m^2 - 2m + 3 &\text{if } m=3 \text{ or } m \geq 5, \\
            14 &\text{if } m=4,
        \end{cases}
    \end{align}
    then $g = g_{a,b}$ for some $a,b > 0$. Thus $\I^0(\Sph^{2m-1}/\Z_m, g) = U(m)/\Z_m$, and the isotropy subgroup of its action is the image of $U(m-1) \times \Z_m$ under the quotient map $U(m) \to U(m)/\Z_m$.
\end{proposition}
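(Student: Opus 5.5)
The plan is to lift to the universal cover $\Sph^{2m-1}$ and reduce to a classification of homogeneous metrics on spheres with large isometry group. Let $g$ be homogeneous on $\Sph^{2m-1}/\Z_m$ with the stated lower bound on $\dim\I$. The lifted metric $\tilde{g}$ on $\Sph^{2m-1}$ is homogeneous: by Lemma \ref{lem:quotient-isom} and Lemma \ref{lem:CN-equality}, the identity component $\I^0(\Sph^{2m-1}/\Z_m,g)$ is $C^0_{\I(\tilde g)}(\Z_m)/\Z_m$. Hence
\begin{align}
\dim\I(\Sph^{2m-1},\tilde g)\geq \dim\I(\Sph^{2m-1}/\Z_m,g)\geq m^2-2m+3 \quad (\text{or } 14 \text{ if } m=4).
\end{align}

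Next I would use the classification of homogeneous metrics on $\Sph^{2m-1}$ (Ziller's list). The transitive connected groups are $SO(2m)$, $U(m)$, $SU(m)$, $Sp(k)\cdot Sp(1)$, $Sp(k)\cdot U(1)$, $Sp(k)$ (with $2m=4k$), and $Spin(7)$, $G_2$, $Spin(9)$ in low dimensions. For each class I would compute $\dim\I^0(\tilde g)$ and compare it with the threshold.
\begin{itemize}
\item For $m\ge 3$, every group of symplectic type has dimension at most $\dim Sp(m/2)\cdot Sp(1)=m^2/2+m/2+3$, which is below $m^2-2m+3$ once $m\geq 5$.
\item For $m=4$, we have $\dim Sp(2)\cdot Sp(1)=13<14$, which is exactly why the threshold $14$ is used there.
\item The exceptional cases ($Spin(7)$ on $\Sph^7$, and $Spin(9)$ on $\Sph^{15}$) give round metrics only, or have dimension too small.
\end{itemize}
So $\I^0(\tilde g)\supseteq SU(m)$ and $\tilde g$ is $U(m)$-invariant. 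A $U(m)$-invariant metric on $\Sph^{2m-1}$ decomposes along the isotropy representation of $U(m-1)$ as $\R\xi\oplus\C^{m-1}$. Both summands are irreducible and inequivalent, so $\tilde g=\tilde g_{a,b}$. For an $SU(m)$-invariant metric with $m\ge3$, the isotropy $SU(m-1)$ still acts irreducibly on $\C^{m-1}$ (complex type, but the Hermitian form is forced once we also use the centralizer), so again $\tilde g$ is Berger; this needs a small check.

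Then I would identify the deck group and compute the isometry group. The $\Z_m$ deck group lies in the centralizer of a group of dimension at least $m^2-2m+3$. By Lemma \ref{lem:law-5.3} this centralizer contains $SU(m)$, and its centralizer in $\I(\tilde g_{a,b})$ is the Hopf $U(1)$ (possibly with a finite extension). So $\Z_m$ is the standard Hopf $\Z_m$, and $g=g_{a,b}$.

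If $a\neq b$, the discussion in \S\ref{subsec:calabi} gives $\I^0=U(m)/\Z_m$. If $a=b$ the metric is round, so $\I^0=N^0_{SO(2m)}(\Z_m)/\Z_m=U(m)/\Z_m$ as well, since $m\geq3$ and the Hopf $\Z_m$ has centralizer $U(m)$. The isotropy of $U(m)$ at a point is $U(m-1)$, and together with the kernel $\Z_m$ its image gives the claimed isotropy subgroup.

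I expect the main obstacle to be the dimension bookkeeping across the full list of transitive sphere actions, especially the $m=4$ case, where $Sp(2)\cdot Sp(1)$ and $Spin(7)$ on $\Sph^7$ must both be excluded. This is why the bound $14$ appears. I would handle it by tabulating $\dim\I^0$ for each family of invariant metrics.
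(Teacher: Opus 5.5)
The paper imports this statement from \cite{law} without proof. Your route is the same template the paper itself uses for $\RP^3$ in Corollary \ref{cor:homog-metrics-RP3}: lift to $\Sph^{2m-1}$, use $\dim\I(\tilde g)\geq\dim C^0_{\I(\tilde g)}(\Gamma)=\dim\I(g)$ with the list of transitive actions on spheres to force $\I^0(\tilde g)\in\{SO(2m),U(m)\}$, then locate the deck group $\Gamma\cong\Z_m$ through its centralizer. Your dimension bookkeeping in the first step is fine. At $m=5$ the symplectic bound is an equality, not strict, but $\R^{10}$ has no quaternionic structure. For $m=4$, the exceptional $\mathfrak{sp}(2)\oplus\mathfrak{u}(1)$ in Lemma \ref{lem:law-5.3} is excluded since $11<14$. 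The gap is in the deck-group step, which as written does not show that $\Gamma$ is the Hopf $\Z_m$, for two reasons.

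\textbf{The round case.} When $\tilde g$ is round, Lemma \ref{lem:law-5.3} does not apply. It classifies subalgebras of $\mathfrak{u}(m)$, whereas $C^0(\Gamma)$ is a priori only a subgroup of $SO(2m)$, so ``this centralizer contains $SU(m)$'' is unjustified there. One fix is de Rham's theorem, as in Corollary \ref{cor:homog-metrics-RP3}(a). Alternatively, count directly. A generator of $\Gamma\subset O(2m)$ acting freely has only primitive $m$-th roots of unity as eigenvalues. Group them into pairs $\{\zeta^{k_\ell},\zeta^{-k_\ell}\}$, $\zeta=e^{2\pi i/m}$, with multiplicities $n_\ell$ and $\sum_\ell n_\ell=m$. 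Its centralizer is $\prod_\ell U(n_\ell)$, of dimension $\sum_\ell n_\ell^2\leq (m-1)^2+1<m^2-2m+3$ unless there is a single pair. So the generator is a complex scalar for some orthogonal complex structure, and $\Gamma$ is $O(2m)$-conjugate to the Hopf $\Z_m$.

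\textbf{The Berger case $a\neq b$.} You leave ``possibly with a finite extension'' open, but that is exactly what decides whether $\Gamma$ lies in the Hopf circle. For $m=2$, the centralizer of $SU(2)$ is $\mathrm{Pin}(2)$ (Corollary \ref{cor:homog-metrics-RP3}(b)), which contains cyclic subgroups of order $4$, such as $\langle\mathbf{j}\rangle$, outside $U(1)$. You need the following steps.
\begin{enumerate}
\item Show $\I(\tilde g_{a,b})=U(m)\rtimes\langle c\rangle$, with $c$ complex conjugation. For $a\neq b$ the vertical and horizontal Ricci eigenvalues differ, so isometries preserve the Hopf fibration.
\item Observe that an element $c\circ A$ centralizing $SU(m)$ would make $B\mapsto\bar B$ an inner automorphism of $SU(m)$. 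This fails for $m\geq 3$, because the standard representation is not self-conjugate.
\item Conclude that $C_{\I(\tilde g_{a,b})}(SU(m))$ is exactly the scalar $U(1)$, so $\Gamma$ is its unique subgroup of order $m$.
\end{enumerate}
With these two points supplied, the rest of your argument goes through, including the computation of $\I^0$ and the isotropy for both $a=b$ and $a\neq b$.
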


\begin{lemma}[\cite{law}*{obtained by combining Lemma 5.7 and Corollary 5.8}] \label{lem:groups-SU}
    If $K$ is a closed subgroup of $SU(m)$ containing $S(U(m-1) \times \Z_m)$, and $K/S(U(m-1) \times \Z_m)$ is diffeomorphic to a spherical space form of dimension $\ell \geq 0$, then $K$ is one of $S(U(m-1) \times \Z_{2m})$, $S(U(m-1) \times U(1))$, or $SU(m)$.
\end{lemma}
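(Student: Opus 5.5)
The plan is to work upstairs in $\mathfrak{su}(m)$ and classify intermediate closed subgroups between $H := S(U(m-1)\times \Z_m)$ and $SU(m)$ by looking at identity components and at the finite component group. First I will identify $H$ concretely. It consists of block matrices $\mathrm{diag}(A,\zeta)$ with $A \in U(m-1)$, $\zeta^m = 1$ and $\det A \cdot \zeta = 1$. Its identity component is $H^0 = \{\mathrm{diag}(A,1): A \in SU(m-1)\}$ times the circle $\{\mathrm{diag}(e^{i\theta}I_{m-1}, e^{-i(m-1)\theta})\}$ restricted appropriately. More precisely, $H^0 = S(U(m-1)\times U(1))\cap\{\zeta \in \text{finite}\}^0 = SU(m-1)$-type, so $\dim H = (m-1)^2 - 1$.

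Given $K \supsetneq H$ closed with $K/H \cong \mathcal{S}'$ a spherical space form of dimension $\ell$, I split by $\ell$.

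\textbf{Case $\ell = 0$.} Then $K^0 = H^0$ and $K$ normalizes $H^0 \cong SU(m-1)$, embedded in the upper-left block. The normalizer of this $SU(m-1)$ in $SU(m)$ is $S(U(m-1)\times U(1))$, since it must preserve the decomposition $\C^{m-1}\oplus\C$ into the standard representation and the trivial summand (for $m\ge 3$). So $K$ is a subgroup of $S(U(m-1)\times U(1))$ containing $H$. Such a subgroup is determined by a closed subgroup of $U(1)$ (the last entry $\zeta$) containing $\Z_m$. Finite such subgroups are $\Z_{km}$. The condition that $K/H$ be a $0$-dimensional spherical space form, i.e.\ $\Sph^0$ with two points, forces $k = 2$. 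This gives $S(U(m-1)\times\Z_{2m})$.

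\textbf{Case $\ell \ge 1$.} Now $\mathfrak{k} \supsetneq \mathfrak{h} = \mathfrak{su}(m-1)$, where $\mathfrak{su}(m-1)$ sits inside $\mathfrak{s}(\mathfrak{u}(m-1)\oplus\mathfrak{u}(1))$. I decompose $\mathfrak{su}(m)$ under $\mathrm{Ad}(SU(m-1))$ as
\[
\mathfrak{su}(m-1) \oplus \R \oplus \C^{m-1}.
\]
Since $\mathfrak{k}$ is $\mathrm{Ad}(H^0)$-invariant, $\mathfrak{k} = \mathfrak{su}(m-1)\oplus V$ with $V$ a sum of these summands. The summand $\C^{m-1}$ is irreducible as a real representation for $m \ge 3$, apart from possible quaternionic subtleties in small cases, which I will check.

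There are then two options. If $V = \R$, then $K^0 = S(U(m-1)\times U(1))$. If $V \supset \C^{m-1}$, then the bracket $[\C^{m-1},\C^{m-1}]$ generates the rest, giving $\mathfrak{k} = \mathfrak{su}(m)$. In the first option, $K$ normalizes $K^0$ and hence lies in its normalizer, which is $K^0$ itself for $m \ge 3$. So $K = S(U(m-1)\times U(1))$, and $K/H = U(1)/\Z_m \cong \Sph^1$. In the second option, $K = SU(m)$, and $SU(m)/H \cong \Sph^{2m-1}/\Z_m$, which is indeed a spherical space form.

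\textbf{Main obstacle.} The hard step is the normalizer computations, together with making sure no exotic finite extensions arise. I would handle these by noting that any element normalizing $SU(m-1)$ preserves the isotypic decomposition of $\C^m$. I would also double-check the case $m = 3$, where $\C^2$ under $SU(2)$ is quaternionic. Even there, however, $\mathrm{Ad}(H)$ includes the $\Z_m$ factor acting on $\C^{m-1}$ by a nontrivial scalar, and this rules out proper $\mathrm{Ad}(H)$-invariant real subspaces of $\C^{m-1}$ that could form a subalgebra.
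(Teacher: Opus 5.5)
Your proof is correct. The paper gives no argument for this lemma; it only imports it from \cite{law} (Lemma 5.7 and Corollary 5.8 there), so there is no route to compare against. Your approach works as a self-contained proof: you pin down $\mathfrak{k}$ through the $\mathrm{Ad}(SU(m-1))$-splitting $\mathfrak{su}(m)=\mathfrak{su}(m-1)\oplus\R\oplus\C^{m-1}$, and you control the component group with normalizers. You also rightly assume $K\supsetneq H$, as in Proposition~\ref{prop:coho-1-structure}(c). Without that assumption $K=H$ would satisfy the hypotheses as literally stated, since a point is $\Sph^0/\Z_2$.

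A few points need tightening.

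\begin{enumerate}
\item Your first description of $H^0$ is wrong. The circle $\{\mathrm{diag}(e^{i\theta}I_{m-1},e^{-i(m-1)\theta})\}$ does not lie in $H$. The correct statement, which you do reach, is $H^0=\{\mathrm{diag}(A,1):A\in SU(m-1)\}$.

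\item Your concern about $m=3$ is unnecessary, and the fallback you give rests on a false claim. $\mathrm{Ad}(\mathrm{diag}(A,\zeta))$ acts on the off-diagonal block by $v\mapsto Av\zeta^{-1}$, and $\det(A\zeta^{-1})=\zeta^{-1}\zeta^{-(m-1)}=\zeta^{-m}=1$. So $\mathrm{Ad}(H)$ acts on $\C^{m-1}$ exactly through $SU(m-1)$, and the $\Z_m$ factor supplies no extra scalar. What you actually need is that the realification of the standard representation of $SU(n)$, $n\geq 2$, is irreducible over $\R$. This is immediate because $SU(n)$ acts transitively on the unit sphere $\Sph^{2n-1}\subset\C^n$. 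Quaternionic type never obstructs real irreducibility; only real type does.

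\item Make the bracket step explicit. Write $X_v=\begin{pmatrix}0&v\\-v^*&0\end{pmatrix}$. The upper-left block of $[X_v,X_{iv}]$ is $2i\,vv^*$, whose trace is $2i|v|^2\neq 0$. Hence $\mathfrak{su}(m-1)\oplus\C^{m-1}$ is not a subalgebra, and $V\supset\C^{m-1}$ forces $V=\R\oplus\C^{m-1}$.

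\item For $N_{SU(m)}(S(U(m-1)\times U(1)))$, observe that any element normalizing this group also normalizes its derived subgroup $SU(m-1)$. Your first normalizer computation then already gives the result.
\end{enumerate}

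With these adjustments the argument is complete.
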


\begin{lemma}[\cite{law}*{Lemma 5.9}] \label{lem:SUm-invariant-metrics}
    Any $SU(m)$-invariant metric on $SU(m)/S(U(m-1) \times \Z_m) \cong \Sph^{2m-1}/\Z_m$ is equal to $g_{a,b}$ for some $a,b > 0$.
\end{lemma}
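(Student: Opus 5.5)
We want to show that any $SU(m)$-invariant metric on $SU(m)/H$, where $H := S(U(m-1)\times\Z_m)$, equals some $g_{a,b}$. Invariant metrics on $G/H$ correspond bijectively to $\mathrm{Ad}(H)$-invariant inner products on an $\mathrm{Ad}(H)$-invariant complement $\mathfrak{p}$ of $\mathfrak{h}$ in $\mathfrak{g}=\mathfrak{su}(m)$. Note that $\mathfrak{h} = \mathfrak{s}(\mathfrak{u}(m-1)\oplus 0)$ is the Lie algebra of both $H$ and $H_0 := S(U(m-1)\times U(1))\cap\{\text{last entry }1\}\cong SU(m-1)$; more precisely, $H$ is generated by its identity component $\{\mathrm{diag}(A,1): A\in SU(m-1)\}$ together with elements $\mathrm{diag}(A,\zeta)$ with $\zeta^m=1$ and $\det A=\zeta^{-1}$. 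So the plan is to decompose $\mathfrak{p}$ into $\mathrm{Ad}(H)$-irreducibles and apply Schur's lemma.

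\textbf{Step 1: the decomposition.} Take the Killing-orthogonal complement
\begin{align}
    \mathfrak{p} = \mathfrak{p}_0 \oplus \mathfrak{p}_1, \quad \mathfrak{p}_0 = \R\,\mathrm{diag}(i,\dots,i,-(m-1)i), \quad \mathfrak{p}_1 = \left\{ \begin{pmatrix} 0 & v \\ -v^* & 0 \end{pmatrix} : v \in \C^{m-1} \right\}.
\end{align}
The adjoint action of $\mathrm{diag}(A,\zeta)\in H$ fixes $\mathfrak{p}_0$ (it commutes with diagonal block matrices) and acts on $\mathfrak{p}_1\cong\C^{m-1}$ by $v\mapsto \zeta^{-1}Av$. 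Already the identity component $SU(m-1)$ acts on $\mathfrak{p}_1\cong\C^{m-1}=\R^{2m-2}$ by its standard representation.

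\textbf{Step 2: Schur's lemma.} $\mathfrak{p}_0$ is one-dimensional and trivial, while $\mathfrak{p}_1$ contains no trivial summand, so any invariant inner product makes $\mathfrak{p}_0\perp\mathfrak{p}_1$. On $\mathfrak{p}_1$ we need the $SU(m-1)$-invariant real inner products on $\C^{m-1}$. For $m-1\ge 2$, the commutant of $SU(m-1)$ in $\mathrm{End}_\R(\C^{m-1})$ is $\C$ when $m-1\ge3$. When $m-1=2$ it is $\mathbb{H}$, since $\C^2$ is quaternionic for $SU(2)$; here the extra $\zeta$-twisted elements of $H$ are not enough to cut it down, but the symmetric elements of $\mathbb{H}$ are still only the reals. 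Symmetric invariant forms correspond to self-adjoint elements of the commutant, which are real multiples of the identity in either case. Hence the metric is $\alpha\,B|_{\mathfrak{p}_0}+\beta\,B|_{\mathfrak{p}_1}$ with $\alpha,\beta>0$, where $B$ is minus the Killing form.

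\textbf{Step 3: identify with Berger metrics.} The round metric corresponds to $\alpha=\beta$, up to a constant normalization between the two pieces. Under the projection $SU(m)/H\to SU(m)/S(U(m-1)\times U(1))=\CP^{m-1}$, $\mathfrak{p}_0$ is tangent to the Hopf circle fibers and $\mathfrak{p}_1$ is horizontal. So independently scaling the two pieces yields exactly $a^2\sigma\otimes\sigma+b^2\pi^*g_{\mathrm{FS}}$ after descending by $\Z_m$, which is $g_{a,b}$.

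The main obstacle is the irreducibility and Schur step in low rank, i.e. making sure the real representation $\mathfrak{p}_1$ admits only one invariant symmetric form up to scale, especially for $m=3$ where $SU(2)$ is quaternionic. The self-adjointness argument handles this: an invariant symmetric form is $\langle Tv,w\rangle$ with $T$ a self-adjoint element of the commutant, and self-adjoint elements of $\C$ or $\mathbb{H}$ are real. The case $m=2$ is excluded by hypothesis, since the paper assumes $m\ge3$ in this section.
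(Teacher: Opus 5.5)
Your proof is correct and complete for $m \geq 3$. The paper gives no argument of its own here; it imports the lemma from \cite{law}*{Lemma 5.9}. Your route is the standard one: split the isotropy representation on $\mathfrak{p}$ into the trivial Hopf line $\mathfrak{p}_0$ and the real-irreducible $\mathfrak{p}_1 \cong \C^{m-1}$, on which all of $H$ acts through $SU(m-1)$ because $\det(\zeta^{-1}A)=1$. Schur's lemma then gives exactly the two-parameter family $g_{a,b}$. The $\C$ versus $\mathbb{H}$ case distinction is unnecessary: a self-adjoint intertwiner of a real irreducible representation has an invariant real eigenspace, so it is automatically scalar.
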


\subsection{Symmetry reduction and the flat case of Theorem \ref{thm:calabi-characterization}}

The next result was also proved in \cite{law}*{\S 5} (specifically Corollary 5.6 there), but since it also depends on previous sections, we will write out the proof to avoid excessive cross-referencing.
\begin{corollary} \label{cor:SUm-action}
    Let $(M^{2m}, g, J)$ be a complete Ricci-flat K\"ahler ALE space of complex dimension $m \geq 3$ with group at infinity $\Z_m$ and satisfying \eqref{eq:calabi-thm-bound}. Then the group $SU(m)$ acts smoothly, almost effectively, and isometrically with cohomogeneity one on $(M,g)$. The principal isotropy subgroup is $S(U(m-1) \times \Z_m)$.
\end{corollary}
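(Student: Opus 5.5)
The plan is to run the symmetry machinery of \S\ref{sec:symmetry-principles} with $\mathcal{S} = \Sph^{2m-1}/\Z_m$ and $\bar{d} = \dim \I(\Sph^{2m-1}/\Z_m, \bar{g}_\infty)$. Since $\Z_m$ (for $m \geq 3$) is central in $U(m)$ but not in $SO(2m)$, Lemma \ref{lem:quotient-isom} gives $\I^0(\Sph^{2m-1}/\Z_m,\bar{g}_\infty) = U(m)/\Z_m$, so $\bar{d} = m^2$.

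\textbf{Step 1: Killing fields, a transitive action, and compactness.} By \eqref{eq:calabi-thm-bound}, $\dim\ker_{L^2}(\Delta_L) \leq 2m-3$ in all cases (note $2 \leq 5 = 2m-3$ when $m=4$). Proposition \ref{prop:law-5.4} then gives $\dim\ker_{L^2}(\Delta_L) \leq d_1$, so Proposition \ref{prop:exact-KFs} yields $\dim \mathfrak{g} \geq m^2-2m+3$. When $m=4$ the bound is $\dim \mathfrak{g} \geq 16-2 = 14$. By Lemma \ref{lem:exact-KF-subalgebra}, $\mathfrak{g}_\rho$ spans the tangent space of each large leaf. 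Hence $G_\rho$ acts transitively, $g_\rho$ is homogeneous, and $\dim \I(\mathcal{S}, g_\rho) \geq \dim \mathfrak{g}_\rho = \dim \mathfrak{g}$. Proposition \ref{prop:law-5.5} now applies, and this is exactly where the threshold $14$ for $m=4$ is needed. It gives $g_\rho = g_{a,b}$ and $\I^0(\mathcal{S},g_\rho) = U(m)/\Z_m$. Lemma \ref{lem:law-5.3} then shows $\mathfrak{g}_\rho$ is $\mathfrak{su}(m)$ or $\mathfrak{u}(m)$. For $m=4$ we must also exclude $\mathfrak{sp}(2)\oplus\mathfrak{u}(1)$, which has dimension $11 < 14$ and so is excluded by the dimension bound. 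In either case $G_\rho$ is the image of $SU(m)$ or $U(m)$, which is closed in $\I(\mathcal{S},g_\rho)$. Lemma \ref{lem:level-set-isos}(c) then makes $G$ a compact group acting isometrically on $(M,g)$, with $G \cong G_\rho$.

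\textbf{Step 2: pass to $SU(m)$.} In both cases $G$ contains the connected subgroup with Lie algebra $\mathfrak{su}(m)$, which is the image of $SU(m)$ under $U(m) \to U(m)/\Z_m$. Pulling back along this quotient, $SU(m)$ acts smoothly and isometrically on $(M,g)$. The action is almost effective, with kernel $\Z_m \cap SU(m)$, which is finite.

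\textbf{Step 3: transitivity and the principal isotropy.} We must check that $SU(m)$ still acts transitively on $\Sph^{2m-1}/\Z_m$. This is standard: $SU(m)$ is transitive on $\Sph^{2m-1}$, and its action commutes with the central Hopf $\Z_m$. On the end, the orbits of $SU(m)$ are therefore exactly the CMC leaves, which have codimension one, so the action has cohomogeneity one.

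The isotropy of $SU(m)$ on $\Sph^{2m-1}$ at $e_1$ is $\{1\} \times SU(m-1)$. On the quotient, the isotropy consists of the $A \in SU(m)$ with $Ae_1 = \zeta e_1$ for some $\zeta \in \Z_m$. These are the block matrices $\mathrm{diag}(\zeta, B)$ with $B \in U(m-1)$ and $\zeta \det B = 1$, which is precisely $S(U(m-1)\times \Z_m)$ after reordering the factors. This matches the isotropy image in Proposition \ref{prop:law-5.5}.

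Principal orbits form an open dense set and the leaves are orbits of full dimension. So the principal isotropy subgroup is $S(U(m-1)\times\Z_m)$, up to conjugacy.

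\textbf{Main obstacle.} The delicate step is verifying the hypotheses of Proposition \ref{prop:law-5.5} and Lemma \ref{lem:level-set-isos}(c). This requires the transitivity from Lemma \ref{lem:exact-KF-subalgebra} together with closedness of $G_\rho$, which relies on the $m=4$ exclusion of $\mathfrak{sp}(2)\oplus\mathfrak{u}(1)$ by dimension count.
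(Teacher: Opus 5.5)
Your proposal is correct and is essentially the paper's own argument. Proposition \ref{prop:exact-KFs} together with Lemma \ref{lem:exact-KF-subalgebra} (via Proposition \ref{prop:law-5.4}) makes the leaf metrics homogeneous, Proposition \ref{prop:law-5.5} makes them quotient-Berger with $\I^0 = U(m)/\Z_m$, Lemma \ref{lem:law-5.3} forces $\mathfrak{g}_\rho \in \{\mathfrak{su}(m), \mathfrak{u}(m)\}$, and one then restricts to $SU(m)$.

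Your explicit isotropy computation giving $S(U(m-1)\times\Z_m)$ and your check that $G_\rho$ is closed only spell out what the paper leaves implicit. One small correction: closedness does not actually hinge on excluding $\mathfrak{sp}(2)\oplus\mathfrak{u}(1)$, since that subgroup would also be compact; the exclusion is what pins $G$ down to $SU(m)$ or $U(m)$.
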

\begin{proof}
    We have $\I^0(\Sph^{2m-1}/\Z_m, \bar{g}_\infty) = U(m)/\Z_m$, whose dimension is $\bar{d} = m^2$. Thus, by Proposition \ref{prop:exact-KFs} and the remarks after it, there is a Lie subalgebra $\mathfrak{g}$ of Killing fields on $(M,g)$ with
    \begin{align} \label{eq:10100}
        \dim\mathfrak{g} \geq \bar{d} - \dim\ker_{L^2}(\Delta_L) \geq \begin{cases}
            m^2 - 2m + 3 &\text{if } m=3 \text{ or } m \geq 5, \\
            14 &\text{if } m=4,
        \end{cases}
    \end{align}
    which for large $\rho$ restricts to a Lie algebra $\mathfrak{g}_\rho$ of Killing fields on the CMC leaves $(\Sph^{2m-1}/\Z_m, g_\rho)$. These Lie algebras integrate to connected Lie groups $G \subseteq \I^0(M,g)$ and $G_\rho \subseteq \I^0(\Sph^{2m-1}/\Z_m, g_\rho)$ with dimensions also lower-bounded as in \eqref{eq:10100}.

    Proposition \ref{prop:law-5.4} implies $\dim\ker_{L^2}(\Delta_L) \leq d_1(\Sph^{2m-1}/\Z_m, \bar{g}_\infty)$. Hence, Lemma \ref{lem:exact-KF-subalgebra} implies that $\mathfrak{g}_\rho$ spans the tangent space to $\Sph^{2m-1}/\Z_m$ at any point if $\rho$ is sufficiently large. Hence, $(\Sph^{2m-1}/\Z_m, g_\rho)$ is homogeneous with $\dim\I(\Sph^{2m-1}/\Z_m, g_\rho)$ also lower bounded by \eqref{eq:10100}.
    
    By Proposition \ref{prop:law-5.5}, we have $g_\rho = g_{a,b}$ for some $a,b > 0$. Moreover, the transitive action of $\I^0(\Sph^{2m-1}/\Z_m, g_\rho) = U(m)/\Z_m$ has isotropy subgroup given by the image of $U(m-1) \times \Z_m$ under the quotient map $U(m) \to U(m)/\Z_m$. Since $\mathfrak{g}_\rho \subseteq \mathfrak{I}(\Sph^{2m-1}/\Z_m, g_\rho) = \mathfrak{u}(m)$ and the lower bound \eqref{eq:10100} holds for $\dim\mathfrak{g}_\rho$, Lemma \ref{lem:law-5.3} implies that $\mathfrak{g}_\rho \in \{\mathfrak{su}(m), \mathfrak{u}(m)\}$ (the $\mathfrak{sp}(2) \oplus \mathfrak{u}(1)$ case when $m=4$ is excluded by dimension). Hence, the integrated subgroup $G_\rho \subseteq \I^0(\Sph^{2m-1}/\Z_m, g_\rho)$ is the image of either $SU(m)$ or $U(m)$ under the quotient map $U(m) \to U(m)/\Z_m$. We consider these cases separately.

    In the former case, $G_\rho$ still acts transitively on $(\Sph^{2m-1}/\Z_m, g_\rho)$ with isotropy subgroup $H$ given by the image of $S(U(m-1) \times \Z_m)$ under the same quotient map. Then Lemma \ref{lem:level-set-isos} shows that $G$ acts smoothly, isometrically, and effectively on $(M,g)$, and the restriction map $G \to G_\rho$ is an isomorphism. Transitivity of $G_\rho$ on $(\Sph^{2m-1}/\Z_m, g_\rho)$ implies that $G$ acts by cohomogeneity one on $(M,g)$ with principal isotropy subgroup $H$. By undoing the $\Z_m$ quotients in the groups $G$ and $H$, the action of $G$ on $(M,g)$ becomes almost effective, and the corollary follows in this case.

    In the latter case, $G_\rho$ acts transitively on $(\Sph^{2m-1}/\Z_m, g_\rho)$ with isotropy subgroup $H$ given by the image of $U(m-1) \times \Z_m$ under the quotient map $U(m) \to U(m)/\Z_m$. As in the last paragraph, $G \cong G_\rho$ acts by cohomogeneity one on $(M,g)$ with principal isotropy subgroup $H$. Undoing the $\Z_m$ quotients, the corollary holds with $SU(m)$ replaced by $U(m)$ and $S(U(m-1) \times \Z_m)$ replaced by $U(m-1) \times \Z_m$. Nonetheless, by restricting this $U(m)$-action to $SU(m)$, the resulting action remains transitive on the CMC leaves. Therefore, the corollary still holds as stated.
\end{proof}

We now sharpen Corollary \ref{cor:SUm-action}; the following analysis is similar to that of \S\ref{subsec:caseA}.
\begin{proposition} \label{prop:bihol-metric}
    Let $(M^{2m}, g, J)$ be a complete Ricci-flat K\"ahler ALE space of complex dimension $m \geq 3$ with group at infinity $\Z_m$, and with orbifold singular set of real codimension at least 4. Suppose also that \eqref{eq:calabi-thm-bound} holds. Then exactly one of (A) or (B) below is true:
    \begin{enumerate}[label=(\Alph*)]
        \item $M$ is diffeomorphic to the total space of the complex line bundle $\O_{\CP^{m-1}}(-m)$. Moreover, identifying the complement of the zero section with $(0,\infty) \times \Sph^{2m-1}/\Z_m$, we have $g = ds^2 + g_{a(s),b(s)}$ where $a$ and $b$ are positive analytic functions with boundary conditions $a(0) = 0$, $a'(0) = m$, $b(0) > 0$, and $b'(0) = 0$.
        \item $(M,g,J)$ is biholomorphically isometric to the flat K\"ahler orbifold $\C^m/\Z_m$.
    \end{enumerate}
\end{proposition}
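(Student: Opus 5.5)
Starting from Corollary \ref{cor:SUm-action}, $G = SU(m)$ acts smoothly, almost effectively and isometrically on $(M,g)$ with cohomogeneity one and principal isotropy $H = S(U(m-1)\times\Z_m)$. Proposition \ref{prop:coho-1-structure} then gives a singular orbit $X \cong G/K$ with $H \subsetneq K$ and $K/H$ a spherical space form. It also identifies $M$ with $G\times_K \mathrm{Cone}(K/H)$ and $M\setminus X$ with $(0,\infty)\times G/H$, on which $g = ds^2 + g(s)$ with each $g(s)$ an $SU(m)$-invariant metric on $G/H \cong \Sph^{2m-1}/\Z_m$. By Lemma \ref{lem:SUm-invariant-metrics}, $g(s) = g_{a(s),b(s)}$ for positive functions $a,b$, and these are analytic because Ricci-flat metrics are real-analytic. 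Lemma \ref{lem:groups-SU} leaves three choices for $K$, which I will treat one at a time.

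If $K = S(U(m-1)\times\Z_{2m})$, then $X$ is a lens space $\Sph^{2m-1}/\Z_{2m}$ of full dimension $2m-1$, with $K/H \cong \Z_2$. I plan to rule this out as in \S\ref{subsec:caseA}: the family $(G/H, g_{a(s),b(s)})$ would have to Hausdorff-converge as $s\to 0$ to a space homeomorphic to $\Sph^{2m-1}/\Z_{2m}$. But a limit of quotient-Berger metrics with $a,b\to a_0,b_0>0$ is $\Sph^{2m-1}/\Z_m$, and otherwise the limit has lower dimension. (Equivalently, $s=0$ would be an interior point of a reflection-type slice, and $M$ would be a nontrivial orbifold with non-codimension-4 singular set, or not have $[0,1)$ orbit space.)

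If $K = SU(m)$, then $X$ is a point, $M \cong \C^m/\Z_m$, and its universal orbifold cover is a Ricci-flat asymptotically Euclidean $\R^{2m}$. The equality case of Bishop--Gromov makes it Euclidean. It remains to upgrade this to a biholomorphic isometry, i.e.\ to check that $J$ is parallel and hence, on the cover, a constant complex structure on $\C^m$ commuting with the lifted $\Z_m$. The $\Z_m$ is then conjugate into the standard scalar subgroup after a unitary change of coordinates. This gives (B).

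If $K = S(U(m-1)\times U(1))$, then $X \cong \CP^{m-1}$ and $K/H \cong \Sph^1/\Z_m$. So $M$ is the disk bundle $G\times_K \C$, where $K$ acts on $\C$ through the character $\det$ of the $U(1)$ factor raised to the appropriate power, which identifies $M$ with $\O_{\CP^{m-1}}(-m)$. Points of $X$ are smooth points: the orbit $X$ has real codimension 2, and the singular set is assumed to have codimension at least 4. Smoothness across $X$ imposes the boundary conditions. The Hopf circle collapses, so $a(0)=0$. $X$ is a genuine $\CP^{m-1}$, so $b(0)>0$ and, by evenness of $b$ for smooth closing, $b'(0)=0$. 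The Hopf fiber of $\Sph^{2m-1}/\Z_m$ has length $2\pi/m$ relative to $a$, so the cone angle condition of $2\pi$ forces $a'(0)=m$. This gives (A). The two alternatives are mutually exclusive because their underlying spaces differ: a smooth manifold with $H^2 \neq 0$ versus an orbifold with a point singularity.

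The main obstacle I expect is the smoothness and normalization analysis near $X$ in the last case: identifying the normal bundle precisely as $\O(-m)$ and pinning down the constant $a'(0)=m$. This depends on the period of the collapsing circle in the chosen normalization of $\sigma$, and I will verify it by comparing with the explicit Calabi metric \eqref{eq:uv-solution}. A secondary subtlety is the holomorphic part of (B).
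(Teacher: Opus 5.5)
Your outline follows the paper's proof: the three choices of $K$ from Lemma \ref{lem:groups-SU}, a Hausdorff-limit exclusion of the lens-space orbit, the codimension count along $\CP^{m-1}$, and Bishop--Gromov in the point case. Your $\CP^{m-1}$ analysis is fine. The exclusion of $K=S(U(m-1)\times\Z_{2m})$, however, does not work. This applies to your version and to the paper's, which is the same argument.

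Here $K/H\cong\Sph^0$, so $X$ is an exceptional orbit. Its neighborhood is $G\times_K\R$, with $K$ acting on $\R$ by $\pm1$. Since $K$ acts freely on $G$, this is a smooth manifold with orbit space $[0,1)$, so your parenthetical alternative is false. The principal orbits double cover $X$, and smooth closing only forces $a,b$ to be even with $a(0),b(0)>0$. Hence $g_{a(s),b(s)}\to g_{a(0),b(0)}$, and the intrinsic limit of the orbits is $\Sph^{2m-1}/\Z_m$, not $X$. The extrinsic Hausdorff limit in $M$ is $X$, but it is reached by identifying two sheets, with no dimension drop. So the premise that the orbits converge to $X$ as metric spaces only holds when $\dim K/H\geq 1$.

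A correct exclusion goes as follows. $M$ retracts onto $X$, so $\pi_1(M)\cong\pi_0(K)\cong\Z_{2m}$. The universal cover is then complete and Ricci-flat, with asymptotic volume ratio at least $2m/m=2$ times the Euclidean one, contradicting Bishop--Gromov. Alternatively, evenness makes the orbit mean curvature $a'/a+(2m-2)b'/b$ vanish at $s=0$. The Riccati equation coming from $\Ric(\del_s,\del_s)=0$ then keeps it $\leq 0$ for $s>0$, so orbit volumes never grow, contradicting the ALE end.

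In the point case, you claim the lifted $\Z_m$ becomes the scalar subgroup after a $\tilde J$-unitary change of coordinates. This is false. $\tilde J$ is a constant orthogonal complex structure commuting with the generator $\gamma$. The fact that $\gamma$ is $SO(2m)$-conjugate to multiplication by $\zeta=e^{2\pi i/m}$ only forces its $\tilde J$-eigenvalues to lie in $\{\zeta,\bar\zeta\}$, not to coincide.

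Concretely, take $\C^3$ with $\gamma=\zeta\,\mathrm{Id}$ and $\zeta=e^{2\pi i/3}$. Let $J_0$ be $i$ on the first factor and $-i$ on the last two. It is constant and commutes with $\gamma$, so $(\R^6/\Z_3,g_{\mathrm{flat}},J_0)$ satisfies every hypothesis of the proposition. In $J_0$-holomorphic coordinates, however, it is $\C^3/\tfrac13(1,2,2)$. This is not biholomorphically isometric to $\C^3/\tfrac13(1,1,1)$: such a map would lift to a complex-linear (or antilinear) isometry of $\C^3$ conjugating $\mathrm{diag}(\zeta,\bar\zeta,\bar\zeta)$ into the scalars, which is impossible.

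The paper's proof only identifies $(\C^m,\tilde g,\tilde J)$ with standard $\C^m$ and says nothing about descending to the quotient. So (B) is correct only if ``the flat K\"ahler orbifold $\C^m/\Z_m$'' means some flat K\"ahler structure on the Riemannian orbifold $\R^{2m}/\Z_m$. You should prove that weaker statement, or flag the issue, rather than assert the conjugation.
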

\begin{proof}
    Let $G = SU(m)$ and $H = S(U(m-1) \times \Z_m)$.
    By Corollary \ref{cor:SUm-action} and Proposition \ref{prop:coho-1-structure}, we get the following.
    \begin{enumerate}[label=(\roman*)]
        \item Outside a submanifold $X$, we can identify
        \begin{align}
            (M \setminus X, g) = \left( (0,\infty) \times G/H, ds^2 + g(s) \right),
        \end{align}
        where $\{g(s)\}_{s>0}$ are $G$-invariant metrics on a fixed copy of $G/H \cong \Sph^{2m-1}/\Z_m$.
        \item There is a closed subgroup $K$ of $G$ strictly containing $H$ such that $X$ is diffeomorphic to $G/K$, and $K/H$ is diffeomorphic to a spherical space form of dimension $\ell \geq 0$.
        \item From the description of $(M,g)$ as a bundle over $X$, the metric spaces $(G/H, g(s))$ converge in the Hausdorff sense to $X$ as $s \to 0$.
    \end{enumerate}
    From (i) and Lemma \ref{lem:SUm-invariant-metrics}, we have $g(s) = g_{a(s),b(s)}$ for some positive functions $a,b: (0,\infty) \to (0,\infty)$. By Ricci-flatness, $a$ and $b$ must be analytic. By (ii) and Lemma \ref{lem:groups-SU}, there are three possibilities for $K$ and $X \cong G/K$:
    \begin{enumerate}[label=(\alph*)]
        \item $K = S(U(m-1) \times \Z_{2m})$ and $X \cong \Sph^{2m-1}/\Z_{2m}$.
        \item $K = S(U(m-1) \times U(1))$ and $X \cong \CP^{m-1}$.
        \item $K = SU(m)$ and $X$ is a point.
    \end{enumerate}
    However, the first option is incompatible with (iii) because it is impossible for $(G/H, g_{a(s),b(s)})$ to converge in the Hausdorff sense to a metric space homeomorphic to $\Sph^{2m-1}/\Z_{2m}$. Indeed, any change in topology must arise from sending $a(s)$ or $b(s)$ to zero, which necessarily results in a dimension drop. Hence we are left with two possibilities:
    \begin{itemize}
        \item If $X \cong \CP^{m-1}$, then $M$ is a smooth manifold. Indeed, either all points in the orbit $X$ are orbifold points, or they are all smooth points. Since $\mathrm{codim}_M(X) = 2$, and $M$ has orbifold singular set of codimension at least 4, the latter must be true. As the principal part $(0,\infty) \times G/H$ is already smooth, it follows that $M$ is smooth. By construction, $M$ is diffeomorphic to $\O_{\CP^{m-1}}(-m)$.
        
        By (i) and the above discussion, the metric outside $X$ is $g = ds^2 + g_{a(s),b(s)}$. For the metric to degenerate to an $X \cong \CP^{m-1}$ at $s=0$, we need $a(s) \to 0$ and $b(s) \to b(0) > 0$ as $s \to 0$. For the metric to remain smooth at $X$, we require that $a'(0) = m$ and $b'(0) = 0$. This gives part (a) of the proposition.

        \item If $X$ is a point, then $M$ is diffeomorphic to the orbifold $\C^m/\Z_m$. Its universal orbifold cover is a complete Ricci-flat K\"ahler asymptotically Euclidean manifold $(\C^m, \tilde{g}, \tilde{J})$. By the sharp case of the Bishop--Gromov theorem, $\tilde{g}$ is the Euclidean metric. K\"ahlerity then necessitates that $(\C^m,\tilde{g},\tilde{J})$ is biholomorphically isometric to the standard $\C^m$.
    \end{itemize}
\end{proof}

Case (B) of Proposition \ref{prop:bihol-metric} establishes the flat $\C^m/\Z_m$ case of Theorem \ref{thm:calabi-characterization}. It remains to characterize $(M,g,J)$ as the Calabi space in case (A). So far, the metric is of the correct form, but more needs to be said about the complex structure. We do this in the next subsection and then finish the proof of Theorem \ref{thm:calabi-characterization}.

\subsection{The non-flat case of Theorem \ref{thm:calabi-characterization}}

For the rest of this section, assume that $(M^{2m},g,J)$ is as in case (A) of Proposition \ref{prop:bihol-metric}. Thus, outside the central $\CP^{m-1}$ we have
\begin{align}
    g = ds^2 + g_{a(s),b(s)} = ds^2 + a(s)^2 \sigma \otimes \sigma + b(s)^2 \pi^*g_{\mathrm{FS}},
\end{align}
where $a$ and $b$ are positive analytic functions on $(0,\infty)$. Write $M^{\mathrm{prin}} = M \setminus \CP^{m-1}$. The projection $\pi: \Sph^{2m-1}/\Z_m \to \CP^{m-1}$ induces a Riemannian submersion
\begin{align} \label{eq:submersion-pi}
    \tilde{\pi}: (M^{\mathrm{prin}}, g) \to ((0,\infty) \times \CP^{m-1}, ds^2 + b(s)^2 g_{\mathrm{FS}}).
\end{align}
Let $\xi$ be the vector field algebraically dual to $\sigma$, so that $\sigma(\xi) = 1$. Then $U = \frac{1}{a(s)}\xi$ is a unit vector field tangential to the Hopf fibers.
Fixing a point $x \in M^{\mathrm{prin}}$, we may decompose
\begin{align}
    T_x M = \mathcal{P}_x \oplus \mathcal{P}_x^\perp,
\end{align}
where $\mathcal{P}_x = \tilde{\pi}^*(T_{\tilde{\pi}(x)}\CP^{m-1})$ is the horizontal lift of $T_{\tilde{\pi}(x)}\CP^{m-1}$, and $\mathcal{P}_x^\perp = \mathrm{span}\{\del_s, U\}$ is its orthogonal complement. Note that the horizontal subspace at $x$ of the Riemannian submersion $\tilde{\pi}$ is $\mathcal{P}_x \oplus \mathrm{span}\{\del_s\}$, and the vertical subspace at $x$ is $\mathrm{span}\{U\}$.

Below, we denote by $I = \tilde{\pi}^*J_{\mathrm{FS}}$ the horizontal lift of the complex structure on $\CP^{m-1}$. Note that $I$ vanishes on $\del_s$, so it restricts to a complex structure on $\mathcal{P}_x$. We will prove the following characterization of $J$:
\begin{lemma} \label{lem:cpx-struc-formula}
    At all points $x$ in a dense open subset of $M^{\mathrm{prin}}$, we have $J = \epsilon_{\mathcal{P}} I + \epsilon_{\mathcal{P}^\perp} J_{\perp}$ for some $\epsilon_{\mathcal{P}}, \epsilon_{\mathcal{P}^\perp} \in \{1,-1\}$, where $J_{\perp}$ is defined by
    \begin{align}
        J_\perp(\del_s) = U, \quad J_\perp(U) = -\del_s, \quad J_\perp(\mathcal{P}_x) = 0.
    \end{align}
\end{lemma}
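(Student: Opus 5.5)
Since $J$ is parallel for the Levi-Civita connection of $g$, it commutes with every element of the holonomy algebra. In particular, at each point $x$ it commutes with all curvature endomorphisms $\Rm(X,Y) \in \mathfrak{so}(T_xM)$. The plan is to compute enough of these endomorphisms explicitly for the cohomogeneity one metric $g = ds^2 + a(s)^2\sigma\otimes\sigma + b(s)^2\pi^*g_{\mathrm{FS}}$, and then read off the centralizer.

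\textbf{Step 1: curvature computation.} Using O'Neill's formulas for the Riemannian submersion \eqref{eq:submersion-pi}, together with the relation $d\sigma = 2\pi^*\omega_{\mathrm{FS}}$, I will compute the curvature endomorphisms in the orthonormal frame $\{\del_s, U\} \cup \{\text{orthonormal basis of } \mathcal{P}_x\}$.
\begin{itemize}
    \item I expect $\Rm(\del_s, U)$ to preserve both $\mathcal{P}_x^\perp$ and $\mathcal{P}_x$.
    \item On $\mathcal{P}_x^\perp$ it should act as $\alpha(s) J_\perp$, and on $\mathcal{P}_x$ as $\beta(s) I$.
    \item Here $\alpha$ and $\beta$ are explicit expressions in $a, b$ and their first two derivatives. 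For instance, $\alpha = -a''/a$ (up to sign), and $\beta$ is an expression like $(ab'/b - a')\cdot(\text{const})/b^2$.
    \item I will also compute $\Rm(X,Y)$ for $X,Y \in \mathcal{P}_x$. Their $\mathcal{P}$-components should be, up to terms proportional to $I$, the curvature of $b^2 g_{\mathrm{FS}}$ corrected by $A$-tensor terms. Hence they should span an $\mathfrak{su}(m-1)$-type subalgebra acting on $\mathcal{P}_x \cong \C^{m-1}$.
\end{itemize}

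\textbf{Step 2: genericity on a dense open set.} Since $a$ and $b$ are analytic, each coefficient function ($\alpha$, $\beta$, $\alpha^2-\beta^2$, and the coefficient of the $\mathfrak{su}(m-1)$ part) is either identically zero or nonzero off a discrete set of $s$ values. I must rule out identical vanishing. If these coefficients vanished identically, the explicit formulas would force $g$ to be flat, or at least force ODE relations that are incompatible with the boundary conditions $a(0)=0$, $a'(0)=m$, $b(0)>0$ of Proposition \ref{prop:bihol-metric}(A). But flatness in case (A) is impossible, since the zero section $\CP^{m-1}$ would then be a compact flat-ambient bolt, contradicting case (B) of that proposition. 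This gives a dense open set of $s$ values where all the relevant coefficients are nonzero and $\alpha^2 \neq \beta^2$.

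\textbf{Step 3: centralizer argument.} At points of this dense open set:
\begin{itemize}
    \item $J$ commutes with $\Rm(\del_s,U)^2$. Its eigenspaces are $\mathcal{P}_x^\perp$ (eigenvalue $-\alpha^2$) and $\mathcal{P}_x$ (eigenvalue $-\beta^2$), and these differ because $\alpha^2 \neq \beta^2$. Hence $J$ preserves $\mathcal{P}_x^\perp$ and $\mathcal{P}_x$.
    \item On the two-dimensional oriented space $\mathcal{P}_x^\perp$, an orthogonal complex structure must equal $\pm J_\perp$.
    \item On $\mathcal{P}_x$, $J$ is orthogonal with $J^2=-1$ and commutes with $I$ and with the $\mathfrak{su}(m-1)$-type action. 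That action is irreducible over $\C$ on $\C^{m-1}$ with $m-1 \geq 2$, so by Schur's lemma its real commutant is $\mathrm{span}\{1, I\}$. The only complex structures in that span are $\pm I$.
\end{itemize}
This yields $J = \epsilon_{\mathcal{P}} I + \epsilon_{\mathcal{P}^\perp} J_\perp$. By continuity, the signs are locally constant on each component of the dense open set.

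\textbf{Main obstacle.} I expect the hardest part to be the explicit curvature bookkeeping in Step 1, together with Step 2's verification that the coefficients do not vanish identically. To handle it, I would first compute the curvature of the cone-like model and of the Calabi solution \eqref{eq:uv-solution} as a sanity check. I would then argue via the ODEs of Ricci-flatness that the identical vanishing of $\beta$ or of $\alpha^2-\beta^2$ propagates to flatness.
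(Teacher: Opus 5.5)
\textbf{Gap: the splitting step rests on an unproved inequality.} Your route departs from the paper exactly where it is incomplete. In Step 3 you get $J(\mathcal{P}_x)=\mathcal{P}_x$ only from the eigenvalue separation $\alpha^2\neq\beta^2$, where $\alpha=-a''/a$ and $\beta=\frac{2}{b^2}\bigl(\frac{ab'}{b}-a'\bigr)$. Step 2 does not prove this inequality.

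The mechanism you suggest, that identical vanishing of $\alpha^2-\beta^2$ forces flatness, cannot work unless it uses $m\geq3$:
\begin{itemize}
\item Every Ricci-flat metric of this form satisfies $\Ric(\del_s,\del_s)=-\frac{a''}{a}-2(m-1)\frac{b''}{b}=0$.
\item On the Calabi metrics the K\"ahler condition $a=bb'$ gives $\beta=-2b''/b$, hence $\alpha=-(m-1)\beta$.
\item For $m=2$ (Eguchi--Hanson) this says $\alpha\equiv-\beta$, so $\alpha^2\equiv\beta^2$ on a non-flat metric.
\end{itemize}
The boundary conditions $a(0)=0$, $a'(0)=m$, $b(0)>0$, $b'(0)=0$ alone also do not suffice, because they leave $\lim_{s\to0}a''/a$ undetermined.

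\textbf{How to repair it.} Use the Ricci-flat equations at the bolt.
\begin{itemize}
\item For a unit $X\in\mathcal{P}$, $\Ric(X,X)=-\frac{b''}{b}-\frac{a'b'}{ab}-(2m-3)\frac{(b')^2}{b^2}+\frac{2m}{b^2}-\frac{2a^2}{b^4}=0$. Letting $s\to0$ gives $b''(0)=m/b(0)$.
\item The $\del_s\del_s$ equation then gives $\alpha\to 2m(m-1)/b(0)^2$, while $\beta\to-2m/b(0)^2$.
\item Hence $\alpha^2-\beta^2\to 4m^3(m-2)/b(0)^4\neq0$ for $m\geq3$.
\end{itemize}
Since $\alpha^2-\beta^2$ is analytic on $(0,\infty)$ with a nonzero limit at $0^+$, its zeros are isolated, which gives your dense open set.

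\textbf{The paper's route avoids $\alpha$ entirely.} Consider the operators you already use for the $\mathfrak{su}(m-1)$ action: for $j\neq k$,
\[
\Rm(e_j,e_k)+\Rm(Ie_j,Ie_k),\quad \Rm(e_j,Ie_k)-\Rm(Ie_j,e_k),\quad \Rm(e_j,Ie_j)-\Rm(e_k,Ie_k).
\]
\begin{itemize}
\item They annihilate $\mathcal{P}_x^\perp$ exactly, because the $\langle IX,Y\rangle$ terms in $\Rm(X,Y)\del_s$ and $\Rm(X,Y)U$ cancel.
\item They act on $\mathcal{P}_x$ as $\mathfrak{su}(m-1)$, times the factor $b^{-4}\bigl(b^2(b')^2-2b^2+a^2\bigr)$.
\end{itemize}
Suppose some $V\in\mathcal{P}_x^\perp$ had $JV=V'+W$ with $V'\in\mathcal{P}_x^\perp$ and $0\neq W\in\mathcal{P}_x$. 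Pick such an operator $T$ with $TW\neq0$. Then $0=JTV=TJV=TW\neq0$, a contradiction.

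This argument only needs $b^2(b')^2-2b^2+a^2\neq0$ and $ab'-a'b\neq0$. Their limits at $s=0$ are $-2b(0)^2$ and $-mb(0)$, which come straight from the boundary conditions, with no ODE input.

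\textbf{What is correct as written.} The rest of your Step 3 agrees with the paper:
\begin{itemize}
\item Schur's lemma on $\mathcal{P}_x$, using commutation with both $I$ and $\mathfrak{su}(m-1)$. Commutation with $I$ is genuinely needed when $m=3$, since $\mathfrak{su}(2)$ acting on $\C^2$ is of quaternionic type.
\item $J=\pm J_\perp$ on the two-dimensional space $\mathcal{P}_x^\perp$.
\end{itemize}
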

This will be proved by commuting curvature operators with $J$. Below, for $X,Y \in T_x M$, the operator $X \wedge Y \in \mathfrak{so}(T_x M)$ is defined by $(X \wedge Y)(Z) = \inner{Y}{Z} X - \inner{X}{Z} Y$.

\begin{lemma} \label{lem:curv-computation}
    Let $x \in M^{\mathrm{prin}}$. For each $X,Y,Z \in \mathcal{P}_x$, we have
    \begin{align}
        \Rm(X,Y)\del_s &= \left( \frac{2ab'}{b^3} - \frac{2a'}{b^2} \right) \inner{IX}{Y} U, \label{eq:curv-lemma1} \\
        \Rm(X,Y)U &= -\left( \frac{2ab'}{b^3} - \frac{2a'}{b^2} \right) \inner{IX}{Y} \del_s, \label{eq:curv-lemma2} \\
        \Rm(X,Y)Z &= \frac{(b')^2 - 1}{b^2} (X \wedge Y)(Z) + \left( \frac{a^2}{b^4} - \frac{1}{b^2} \right) (IX \wedge IY)(Z) \label{eq:curv-lemma3} \\
        &\quad + \left( \frac{2}{b^2} - \frac{2a^2}{b^4} \right) \inner{IX}{Y} IZ, \\
        \Rm(\del_s, U)\del_s &= -\frac{a''}{a} U, \label{eq:curv-lemma4} \\
        \Rm(\del_s, U)U &= \frac{a''}{a} \del_s, \label{eq:curv-lemma5} \\
        \Rm(\del_s, U)X &= \left( \frac{2ab'}{b^3} - \frac{2a'}{b^2} \right) IX. \label{eq:curv-lemma6}
    \end{align}
    In particular, $\Rm(\del_s,U)$ and $\Rm(X,Y)$ both preserve the splitting $T_x M = \mathcal{P}_x \oplus \mathcal{P}_x^\perp$.
\end{lemma}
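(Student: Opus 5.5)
The plan is a direct curvature computation with Cartan's structure equations in an adapted orthonormal coframe, cross-checked against the O'Neill formulas for the Riemannian submersion $\tilde{\pi}$ in \eqref{eq:submersion-pi}.

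\textbf{Coframe and connection forms.} Work locally on $M^{\mathrm{prin}} = (0,\infty) \times \Sph^{2m-1}/\Z_m$. Choose a local orthonormal coframe $\{\theta^\alpha\}_{\alpha=1}^{2m-2}$ for $g_{\mathrm{FS}}$ on $\CP^{m-1}$, adapted to $J_{\mathrm{FS}}$, with Levi-Civita connection forms $\psi^\alpha{}_\beta$. Write $\omega_{\mathrm{FS}} = \tfrac12 I_{\alpha\beta}\theta^\alpha \wedge \theta^\beta$. Then $g$ has the orthonormal coframe
\begin{align}
    e^0 = ds, \quad e^1 = a(s)\sigma, \quad e^\alpha = b(s)\pi^*\theta^\alpha.
\end{align}
Using $d\sigma = 2\pi^*\omega_{\mathrm{FS}}$, the exterior derivatives are
\begin{align}
    de^1 = \frac{a'}{a} e^0 \wedge e^1 + \frac{a}{b^2} I_{\alpha\beta} e^\alpha \wedge e^\beta, \qquad de^\alpha = \frac{b'}{b} e^0 \wedge e^\alpha - \psi^\alpha{}_\beta \wedge e^\beta.
\end{align}
I would then solve the first structure equation $de^i = -\omega^i{}_j \wedge e^j$ for the skew matrix of connection forms. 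The expected answer is
\begin{align}
    \omega^1{}_0 = \frac{a'}{a}e^1, \quad \omega^\alpha{}_0 = \frac{b'}{b} e^\alpha, \quad \omega^\alpha{}_1 = \frac{a}{b^2} I_{\alpha\beta}e^\beta, \quad \omega^\alpha{}_\beta = \psi^\alpha{}_\beta - \frac{a}{b^2} I_{\alpha\beta} e^1,
\end{align}
up to signs, which must be fixed carefully.

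\textbf{Curvature forms.} Next compute $\Omega^i{}_j = d\omega^i{}_j + \omega^i{}_k \wedge \omega^k{}_j$. The inputs are:
\begin{itemize}
    \item the curvature of $\psi$, namely the Fubini--Study curvature $(X\wedge Y) + (IX \wedge IY) + 2\inner{IX}{Y} I$ with holomorphic sectional curvature $4$;
    \item the parallelism $\nabla I = 0$ on $\CP^{m-1}$;
    \item $d\sigma = 2\pi^*\omega_{\mathrm{FS}}$ again.
\end{itemize}
Each identity in the lemma is then read off from one block:
\begin{itemize}
    \item $\Omega^1{}_0$ restricted to $e^\alpha \wedge e^\beta$ gives \eqref{eq:curv-lemma1}--\eqref{eq:curv-lemma2}. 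The coefficient $\tfrac{2ab'}{b^3} - \tfrac{2a'}{b^2}$ arises as $d\bigl(\tfrac{a}{b^2}\bigr)$ combined with the $\tfrac{a'}{a}$ and $\tfrac{b'}{b}$ terms.
    \item $\Omega^\alpha{}_\beta$ gives \eqref{eq:curv-lemma3}. The Fubini--Study curvature is rescaled by $b^{-2}$, the Gauss-type term $-(b')^2/b^2$ appears, and the Hopf-fibre O'Neill $A$-tensor terms contribute $-a^2/b^4$.
    \item $\Omega^1{}_0$ on $e^0\wedge e^1$ gives \eqref{eq:curv-lemma4}--\eqref{eq:curv-lemma5}, in the familiar warped form $-a''/a$.
    \item $\Omega^\alpha{}_\beta$ on $e^0\wedge e^1$ gives \eqref{eq:curv-lemma6}.
\end{itemize}
Finally I translate the curvature forms into the paper's sign convention \eqref{eq:Rm-convention}, which is opposite to the usual one.

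\textbf{Splitting statement.} The remaining components, for example $\Rm(X,Y)$ applied to $\del_s$ having no $\mathcal{P}$-part, and $\Rm(\del_s,U)$ mapping $\mathcal{P}_x$ into $\mathcal{P}_x$, are either among the displayed formulas or follow from pair symmetry $\inner{\Rm(X,Y)Z}{W} = \inner{\Rm(Z,W)X}{Y}$. One also needs the vanishing of the mixed forms $\Omega^\alpha{}_0$ and $\Omega^\alpha{}_1$ on $e^\beta\wedge e^\gamma$ and on $e^0 \wedge e^1$. This vanishing holds by $U(m-1)$-isotropy: these components would define $U(m-1)$-equivariant maps $\wedge^2\mathcal{P} \to \mathcal{P}$, which must vanish because $\mathcal{P}$ is the standard representation, and $\wedge^2 \mathcal{P}$ contains no copy of it when $m \ge 3$. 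With that, the last assertion of the lemma follows.

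\textbf{Main obstacle.} The hard part is sign and normalization bookkeeping, especially for the factor $2$ coming from $d\sigma = 2\pi^*\omega_{\mathrm{FS}}$ and for the reversed convention \eqref{eq:Rm-convention}. I would first check the result against the flat cone $a(s) = s$, $b(s) = s$, i.e.\ Euclidean $\C^m/\Z_m$. There all coefficients must vanish, and indeed $\tfrac{2s\cdot 1}{s^3} - \tfrac{2}{s^2} = 0$, $(b')^2 - 1 = 0$, $\tfrac{a^2}{b^4} - \tfrac{1}{b^2} = 0$ and $a'' = 0$. I would then check a single fixed Berger slice against the known sectional curvatures $K(X,IX) = 4/b^2 - 3a^2/b^4$ and $K(X,U) = a^2/b^4$. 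These checks pin down every sign and constant.
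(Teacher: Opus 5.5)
Your approach is correct, but it is a genuinely different computation from the paper's.

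\textbf{How the paper does it.} The paper never writes down connection forms. It treats $M^{\mathrm{prin}}$ as the total space of the Hopf-circle Riemannian submersion $\tilde{\pi}$ over the warped product $((0,\infty)\times\CP^{m-1}, ds^2+b^2g_{\mathrm{FS}})$. It computes the O'Neill tensors by hand, using specially chosen extensions of $X,Y$: $A_XY=-\frac{a}{b^2}\inner{IX}{Y}U$, $A_X\del_s=0$, $T_UX=0$, $T_U\del_s=\frac{a'}{a}U$. It imports the base curvature from the warped-product and Fubini--Study formulas, and gets $\Rm(\del_s,U)\del_s$ from a separate Levi-Civita computation. The vanishing of mixed components is read off from these same formulas.

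\textbf{What your route buys.} Your Cartan computation produces every block from one set of connection forms, with no choice of extensions. Your isotropy argument explains the splitting structurally. The central circle of the isotropy $U(m-1)$ acts on $\mathcal{P}_x$ with weights $\pm1$ and trivially on $\mathcal{P}_x^\perp$. So every component of $\Rm$ with an odd number of $\mathcal{P}$-slots vanishes, and no restriction $m\geq 3$ is needed. For the $e^0\wedge e^1$ components the relevant fact is $\mathcal{P}_x^{U(m-1)}=0$, not a statement about maps $\wedge^2\mathcal{P}\to\mathcal{P}$.

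This uses the full $U(m)$-invariance of $g_{a(s),b(s)}$. With only the $SU(m)$-action of Corollary \ref{cor:SUm-action}, the isotropy acts on $\mathcal{P}_x$ as $SU(m-1)$. For $m=4$ one has $\wedge^2\C^3\cong\overline{\C^3}$, so the shortcut would fail there. In exchange, the paper's route makes it transparent that the Hopf twisting enters only through $A$, which is the source of all the $a^2/b^4$ terms.

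\textbf{Sign fixes.} Two of your displayed guesses need the corrections you anticipated.
\begin{itemize}
\item Take $de^i=-\omega^i{}_j\wedge e^j$ and $\omega_{\mathrm{FS}}=\frac12 I_{\alpha\beta}\theta^\alpha\wedge\theta^\beta$. The $e^1\wedge e^\beta$ terms in $de^\alpha$ cancel only if $\omega^\alpha{}_1$ and the $e^1$-part of $\omega^\alpha{}_\beta$ have the same coefficient. The $de^1$ equation forces that coefficient to be $-\frac{a}{b^2}$. So $\omega^\alpha{}_1=-\frac{a}{b^2}I_{\alpha\beta}e^\beta$, while your $\omega^\alpha{}_\beta$ is right.
\item With the paper's $X\wedge Y$, the Fubini--Study curvature is $(X\wedge Y)+(IX\wedge IY)-2\inner{IX}{Y}I$ in the standard convention, and its negative in \eqref{eq:Rm-convention}. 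Your version, with $+2$, gives $K(X,IX)=0$ in either convention.
\end{itemize}
With these fixes one gets
\begin{align}
\Omega^1{}_0=\frac{a''}{a}e^0\wedge e^1+\left(\frac{a'}{b^2}-\frac{ab'}{b^3}\right)I_{\alpha\beta}e^\alpha\wedge e^\beta,
\end{align}
which after the sign flip is exactly \eqref{eq:curv-lemma1} and \eqref{eq:curv-lemma4}. The $\Omega^\alpha{}_\beta$ block likewise reproduces \eqref{eq:curv-lemma3} and \eqref{eq:curv-lemma6}.

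\textbf{Limits of your sanity checks.} They cannot detect a sign error in the mixed coefficient $\frac{2ab'}{b^3}-\frac{2a'}{b^2}$. It vanishes on the cone and does not enter the intrinsic curvature of a slice. Its sign therefore rests on the direct computation alone, though later in the paper only its nonvanishing is used.
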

\begin{proof}
    Let $X,Y,Z,W \in \mathcal{P}_x$. Applying O'Neill's formulae \cite{oneill} to the Riemannian submersion $\tilde{\pi}$, we have
    \begin{align}
        \inner{\Rm(X,Y)\del_s}{U} &= \inner{(\nabla_{\del_s}A)_X Y}{U} + \inner{A_X Y}{T_U \del_s} - \inner{A_Y \del_s}{T_U X} - \inner{A_{\del_s} X}{T_U Y}, \\
        \inner{\Rm(X,Y)Z}{U} &= \inner{(\nabla_Z A)_X Y}{U} + \inner{A_X Y}{T_U Z} - \inner{A_Y Z}{T_U X} - \inner{A_Z X}{T_U Y}, \\
        \inner{\Rm(X,Y)Z}{\del_s} &= \inner{\Rm^{\tilde{\pi}}(X,Y)Z}{\del_s} - 2 \inner{A_X Y}{A_Z \del_s} + \inner{A_Y Z}{A_X \del_s} + \inner{A_Z X}{A_Y \del_s}, \\
        \inner{\Rm(X,Y)Z}{W} &= \inner{\Rm^{\tilde{\pi}}(X,Y)Z}{W} - 2 \inner{A_X Y}{A_Z W} + \inner{A_Y Z}{A_X W} + \inner{A_Z X}{A_Y W},
    \end{align}
    where $\Rm^{\tilde{\pi}}(X,Y)Z = \tilde{\pi}^*(\Rm_0(\tilde{\pi}_*X, \tilde{\pi}_*Y) \tilde{\pi}_*Z)$ and $\Rm_0$ is the curvature of the base manifold of $\tilde{\pi}$. The tensors $A$ and $T$ are defined as follows: for $E,F \in \mathcal{P}_x$, we arbitrarily extend them to $\mathcal{P}$-valued vector fields around $x$ denoted by the same letters, and define
    \begin{align}
        A_E F = \frac{1}{2} \inner{[E,F]}{U} U, \quad T_U E = \inner{\nabla_U E}{U} U.
    \end{align}
    Meanwhile, we let
    \begin{align}
        A_E \del_s = \frac{1}{2} \inner{[E,\del_s]}{U} U, \quad T_U \del_s = \inner{\nabla_U \del_s}{U} U.
    \end{align}
    We will now compute all the $A$ and $T$ combinations appearing above. Below, we extend $X \in \mathcal{P}_x$ to a local $\mathcal{P}$-valued vector field by first extending in the $\del_s$ direction with $[\del_s,X] = 0$, then horizontally lifting in the $\xi$ direction so that $[\xi,X] = 0$. It follows that
    \begin{align}
        [U,X] = \frac{1}{a(s)}[\xi,X] - X\left(\frac{1}{a(s)}\right)\xi = 0.
    \end{align}
    We do the same for $Y,Z \in \mathcal{P}_x$.
    
    Firstly, since $[X,\del_s] = 0$, we have
    \begin{align} \label{eq:curv1}
        A_X \del_s = 0.
    \end{align}
    Next, since $U = \frac{1}{a}\xi$ where $\xi$ is algebraically dual to $\sigma$, and $\sigma(X) = \sigma(Y) = 0$, we have $A_X Y = \frac{a}{2} \sigma([X,Y]) U = -\frac{a}{2} d\sigma(X,Y) U$. Combined with the fact that $d\sigma = 2 \pi^*\omega_{\mathrm{FS}} = 2\pi^*g_{\mathrm{FS}}(J_{\mathrm{FS}}(\cdot), \cdot)$ for the Hopf fibration $\pi: \Sph^{2m-1}/\Z_m \to \CP^{m-1}$, this implies
    \begin{align} \label{eq:curv2}
        A_X Y = -\frac{a}{b^2} \inner{IX}{Y} U
    \end{align}
    where we recall that $I := \tilde{\pi}^*J_{\mathrm{FS}}$.
    Since $(\CP^{m-1}, g_{\mathrm{FS}}, J_{\mathrm{FS}})$ is K\"ahler and $\tilde{\pi}$ is a Riemannian submersion, we have $\inner{(\nabla_Z I)X}{Y} = 0$. Differentiating \eqref{eq:curv2} gives
    \begin{align}
        \nabla_Z(A_X Y) = -\frac{a}{b^2} \left[ \inner{I(\nabla_Z X)}{Y}U + \inner{IX}{\nabla_Z Y} U + \inner{IX}{Y} \nabla_Z U \right],
    \end{align}
    which implies that
    \begin{align} \label{eq:curv3}
        (\nabla_Z A)_X Y &= \nabla_Z(A_X Y) - A_{\nabla_Z X}Y - A_X(\nabla_Z Y) = -\frac{a}{b^2} \inner{IX}{Y} \nabla_Z U.
    \end{align}
    Likewise, since $\nabla_{\del_s} X = \frac{b'}{b}X$ and $\nabla_{\del_s} Y = \frac{b'}{b}Y$, we compute
    \begin{align} \label{eq:curv4}
        (\nabla_{\del_s} A)_X Y &= \nabla_{\del_s} (A_X Y) = -\left(\frac{a}{b^2}\right)' \inner{IX}{Y} U - \frac{a}{b^2} \inner{IX}{Y} \nabla_{\del_s} U. 
    \end{align}
    Next, since $[U,X] = 0$ and $U$ has unit length, we have
    \begin{align} \label{eq:curv5}
        T_U X = \inner{\nabla_U X}{U} U = \inner{\nabla_X U}{U} U = \frac{1}{2} X|U|^2 = 0.
    \end{align}
    Meanwhile, since $[\xi,\del_s] = 0$ and $U = \frac{1}{a}\xi$, we have
    \begin{align} \label{eq:curv6}
        T_U \del_s = \frac{1}{a^2} \inner{\nabla_\xi \del_s}{\xi} U = \frac{1}{a^2}\inner{\nabla_{\del_s} \xi}{\xi} U = \frac{1}{2a^2} \del_s |\xi|^2 U = \frac{a'}{a} U.
    \end{align}

    We take all the computations between \eqref{eq:curv1} and \eqref{eq:curv6}, and substitute them back into O'Neill's formulas stated at the beginning of the proof. We also use $\inner{\nabla_{(\cdot)} U}{U} = 0$ repeatedly since $U$ is a unit vector field. This gives
    \begin{align}
        \inner{\Rm(X,Y)\del_s}{U} &= \left(-\left(\frac{a}{b^2}\right)' - \frac{a'}{b^2}\right) \inner{IX}{Y} = \left( \frac{2ab'}{b^3} - \frac{2a'}{b^2} \right) \inner{IX}{Y}, \\
        \inner{\Rm(X,Y)Z}{U} &= 0, \label{eq:curv-a2} \\
        \inner{\Rm(X,Y)Z}{\del_s} &= \inner{\Rm^{\tilde{\pi}}(X,Y)Z}{\del_s} = 0, \label{eq:curv-a3} \\
        \inner{\Rm(X,Y)Z}{W} &= \inner{\Rm^{\tilde{\pi}}(X,Y)Z}{W} - \frac{2a^2}{b^4} \inner{IX}{Y} \inner{IZ}{W} \\
        &\quad + \frac{a^2}{b^4} \inner{IY}{Z} \inner{IX}{W} + \frac{a^2}{b^4} \inner{IZ}{X} \inner{IY}{W}. \label{eq:curv-a4}
    \end{align}
    The first three equations, together with the symmetries of the curvature tensor, imply \eqref{eq:curv-lemma1}, \eqref{eq:curv-lemma2}, and \eqref{eq:curv-lemma6}. Meanwhile, the last three equations imply
    \begin{align} \label{eq:simplified-curv}
        \Rm(X,Y)Z &= \Rm^{\tilde{\pi}}(X,Y)Z - \frac{2a^2}{b^4} \inner{IX}{Y} IZ + \frac{a^2}{b^4} (IX \wedge IY)(Z).
    \end{align}
    Since the base manifold of \eqref{eq:submersion-pi} is itself a warped product, we use the formulae for the curvature of a warped product and the Fubini--Study metric (e.g. \cite{kn2}*{\S IX.7}) to get
    \begin{align}
        \Rm^{\tilde{\pi}}(X,Y)Z &= \tilde{\pi}^*(\Rm_0(\pi_*X,\pi_*Y)\pi_*Z) + \frac{(b')^2}{b^2} (X \wedge Y)(Z) \\
        &= \left(\frac{(b')^2-1}{b^2}\right)(X \wedge Y)(Z) + \frac{2}{b^2}\inner{IX}{Y} IZ - \frac{1}{b^2}(IX \wedge IY)(Z). \label{eq:cpn-curv}
    \end{align}
    Combining \eqref{eq:simplified-curv} and \eqref{eq:cpn-curv}, we arrive at \eqref{eq:curv-lemma3}.

    Finally, the identities $[\xi, \del_s] = 0$ and $U = \frac{1}{a}\xi$ imply that $[U,\del_s] = -\frac{a'}{a}U$. Since $\nabla_{\del_s} \del_s = 0$ and $\nabla_{\del_s} X = \frac{b'}{b}X$ is orthogonal to $U$, we get $\nabla_{\del_s} U = 0$. Thus, $\nabla_U \del_s = -[\del_s, U] = \frac{a'}{a}U$. These identities yield (after simplification)
    \begin{align}
        \Rm(\del_s, U)\del_s &= \nabla_U \nabla_{\del_s} \del_s - \nabla_{\del_s} \nabla_U \del_s + \nabla_{[\del_s,U]} \del_s = -\frac{a''}{a} U,
    \end{align}
    which implies \eqref{eq:curv-lemma4} and \eqref{eq:curv-lemma5}.
\end{proof}

Note that $I$ turns $\mathcal{P}_x \cong T_x\CP^{m-1}$ into a complex vector space, and together with the metric $g = \inner{\cdot}{\cdot}$ it induces a Hermitian inner product $(X,Y) \mapsto \inner{X}{Y} + i\inner{X}{IY}$. Denote by $U(\mathcal{P}_x, I, g)$ the group of linear transformations on the complex vector space $(\mathcal{P}_x, I)$ which preserve this Hermitian inner product, and $SU(\mathcal{P}_x, I, g)$ the subgroup of those with determinant 1. Let $\mathfrak{u}(\mathcal{P}_x, I, g)$ and $\mathfrak{su}(\mathcal{P}_x, I, g)$ be their Lie algebras.

\begin{corollary} \label{cor:subalg-su}
    For all $x$ in a dense open subset of $M^{\mathrm{prin}}$, there are Lie subalgebras $\mathfrak{h} \subset \mathfrak{k}$ of $\mathfrak{so}(T_x M)$ such that
    \begin{enumerate}[label=(\roman*)]
        \item Each map in $\mathfrak{k}$ is a linear combination of maps of the form $\Rm(V,W)$ where $V, W \in T_x M$.
        \item Each map in $\mathfrak{k}$ preserves the decomposition $T_x M = \mathcal{P}_x \oplus \mathcal{P}_x^\perp$.
        \item $\mathfrak{k}$ acts on $\mathcal{P}_x$ as the standard representation of $\mathfrak{u}(\mathcal{P}_x, I, g)$.
        \item $\mathfrak{h}$ acts on $\mathcal{P}_x^\perp$ trivially, and acts on $\mathcal{P}_x$ as the standard representation of $\mathfrak{su}(\mathcal{P}_x, I, g)$.
    \end{enumerate}
\end{corollary}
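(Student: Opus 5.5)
The plan is to build $\mathfrak{h}$ and $\mathfrak{k}$ directly from the curvature operators computed in Lemma \ref{lem:curv-computation}, taking linear combinations that kill the unwanted parts. I abbreviate the coefficients there as
\begin{align}
\alpha = \frac{(b')^2-1}{b^2}, \quad \beta = \frac{a^2}{b^4}-\frac{1}{b^2}, \quad \delta = \frac{2ab'}{b^3}-\frac{2a'}{b^2}.
\end{align}
The coefficient of $\inner{IX}{Y} IZ$ in \eqref{eq:curv-lemma3} is then $-2\beta$. Let $J_\perp$ be as in Lemma \ref{lem:cpx-struc-formula}, and write $I_{\mathcal{P}}$ for $I$ acting on $\mathcal{P}_x$ and by $0$ on $\mathcal{P}_x^\perp$. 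For $X,Y \in \mathcal{P}_x$, Lemma \ref{lem:curv-computation} then reads
\begin{align}
\Rm(X,Y) &= \alpha\, X\wedge Y + \beta\, IX\wedge IY - 2\beta \inner{IX}{Y} I_{\mathcal{P}} + \delta \inner{IX}{Y} J_\perp, \\
\Rm(\del_s,U) &= -\frac{a''}{a} J_\perp + \delta\, I_{\mathcal{P}}.
\end{align}
Every such operator visibly preserves $\mathcal{P}_x \oplus \mathcal{P}_x^\perp$. Property (i) will hold by construction, and (ii) will follow once $\mathfrak{k}$ is the Lie algebra generated by such operators.

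\emph{Construction of $\mathfrak{h}$.} I would take $X,Y \in \mathcal{P}_x$ with $\inner{IX}{Y}=0$ and form $\Rm(X,Y)+\Rm(IX,IY)$. Using $I^2=-1$, so that $IIX \wedge IIY = X\wedge Y$, and $\inner{IIX}{IY}=\inner{IX}{Y}=0$, this equals
\begin{align}
(\alpha+\beta)(X\wedge Y + IX\wedge IY).
\end{align}
This operator vanishes on $\mathcal{P}_x^\perp$. A standard linear-algebra fact is that the operators $X\wedge Y + IX\wedge IY$ with $\omega(X,Y)=\inner{IX}{Y}=0$ span exactly $\mathfrak{su}(\mathcal{P}_x,I,g)$, since $\dim_\C \mathcal{P}_x = m-1 \geq 2$; I would check this on a unitary basis. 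So wherever $\alpha+\beta \neq 0$, these operators span a Lie algebra $\mathfrak{h}$ satisfying (iv), and it is closed under brackets because it equals $\mathfrak{su}$.

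\emph{Construction of $\mathfrak{k}$.} I would set $\mathfrak{k} = \mathfrak{h} \oplus \mathrm{span}\{\Rm(\del_s,U)\}$. The added operator acts on $\mathcal{P}_x$ as $\delta I$, which is central in $\mathfrak{u}(\mathcal{P}_x,I,g)$, and it commutes with $\mathfrak{h}$ because $J_\perp$ acts on $\mathcal{P}_x^\perp$ while $\mathfrak{h}$ acts only on $\mathcal{P}_x$. Hence $\mathfrak{k}$ is an abelian extension of $\mathfrak{h}$, so it is a Lie algebra. Wherever also $\delta \neq 0$, it acts on $\mathcal{P}_x$ as $\mathfrak{su} \oplus \R I = \mathfrak{u}(\mathcal{P}_x,I,g)$, giving (iii). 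If instead $\delta \equiv 0$, I would use $\Rm(X,IX)$ summed over a unitary basis to produce $I_{\mathcal{P}}$ modulo $\mathfrak{h}$; its $\mathcal{P}_x$-part has $I$-coefficient roughly $(\alpha+\beta-2\beta(m-1))|X|^2$ up to normalization, so one of the two sources of $I$ should be available.

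\emph{The main obstacle} is producing the dense open set, that is, showing that $\alpha+\beta$ and (say) $\delta$ do not vanish identically. Since $a$ and $b$ are analytic on $(0,\infty)$, each coefficient is analytic in $s$, so it either vanishes on a discrete set of $s$-values only or vanishes identically. The set of $x$ where all the needed coefficients are nonzero is then the complement of finitely many-or-discretely-many level sets, which is open and dense. To rule out identical vanishing, I would use the boundary conditions $a(0)=0$, $a'(0)=m$, $b(0)>0$, $b'(0)=0$ from Proposition \ref{prop:bihol-metric}(A). As $s\to 0$ we get $\alpha \to -1/b(0)^2$ and $\beta \to -1/b(0)^2$, so $\alpha+\beta \to -2/b(0)^2 \neq 0$. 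Likewise $\delta \to -2m/b(0)^2 \neq 0$. Hence neither coefficient is identically zero, and analyticity finishes the argument. If the limit computation near $s=0$ were delicate, the fallback would be the ALE decay of Theorem \ref{thm:ALE-decay-rate} as $s\to\infty$, combined with non-flatness in case (A).
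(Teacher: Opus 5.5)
Your proposal is correct and is essentially the paper's proof: your coefficients $\alpha+\beta$ and $\delta$ are exactly the paper's $\frac{(b')^2}{b^2}-\frac{2}{b^2}+\frac{a^2}{b^4}$ and $\frac{2ab'}{b^3}-\frac{2a'}{b^2}$. Your operators $\Rm(X,Y)+\Rm(IX,IY)$ with $\inner{IX}{Y}=0$ include the paper's $A_{jk}$ and $B_{jk}$, and the operator $e_j\wedge Ie_j-e_k\wedge Ie_k$ underlying its $D_{jk}$ is also of this form (take $X=(e_j+e_k)/\sqrt2$, $Y=I(e_j-e_k)/\sqrt2$). As in the paper, you obtain $\mathfrak{k}$ by adjoining $\Rm(\del_s,U)$, and you get the dense open set from analyticity plus the limits at $s=0$, which matches the paper's treatment of $\varphi$ and $\psi$. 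Your fallbacks for $\delta\equiv 0$ and via $s\to\infty$ are unnecessary, since the $s\to 0$ limits already give nonvanishing.
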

\begin{proof}
    Let $\{e_j\}_{j=1}^{m-1}$ be a unitary orthonormal basis for $(\mathcal{P}_x, I)$, so that $\{e_j, Ie_j\}_{j=1}^{m-1}$ is a $g$-orthonormal basis. Using Lemma \ref{lem:curv-computation}, we compute for each $j \neq k$
    \begin{align}
        A_{jk} &:= \Rm(e_j,e_k) + \Rm(Ie_j, Ie_k) = \left( \frac{(b')^2}{b^2} - \frac{2}{b^2} + \frac{a^2}{b^4} \right) (e_j \wedge e_k + Ie_j \wedge Ie_k), \\
        B_{jk} &:= \Rm(e_j,Ie_k) - \Rm(Ie_j,e_k) = \left( \frac{(b')^2}{b^2} - \frac{2}{b^2} + \frac{a^2}{b^4} \right) (e_j \wedge Ie_k - Ie_j \wedge e_k), \\
        D_{jk} &:= \Rm(e_j,Ie_j) - \Rm(e_k,Ie_k) = \left( \frac{(b')^2}{b^2} - \frac{2}{b^2} + \frac{a^2}{b^4} \right) (e_j \wedge Ie_j - e_k \wedge Ie_k).
    \end{align}
    Assume that the following are true at $x$; we will justify this later.
    \begin{align} \label{eq:nonzero-assumption-x}
        \frac{(b')^2}{b^2} - \frac{2}{b^2} + \frac{a^2}{b^4} \neq 0, \quad \frac{2ab'}{b^3} - \frac{2a'}{b^2} \neq 0.
    \end{align}
    Then $A_{jk}$, $B_{jk}$, and $D_{jk}$ vanish on $\mathcal{P}_x^\perp$, while they act on $\mathcal{P}_x$ as nonzero scalar multiples of
    \begin{align}
        A_{jk}&: \quad e_j \mapsto -e_k, \quad e_k \mapsto e_j, \quad Ie_j \mapsto -Ie_k, \quad Ie_k \mapsto Ie_j, \\
        B_{jk}&: \quad e_j \mapsto -Ie_k, \quad e_k \mapsto -Ie_j, \quad Ie_j \mapsto e_k, \quad Ie_k \mapsto e_j, \\
        D_{jk}&: \quad e_j \mapsto -Ie_j, \quad e_k \mapsto Ie_k, \quad Ie_j \mapsto e_j, \quad Ie_k \mapsto -e_k.
    \end{align}
    Thus, under the identification of complex vector spaces $(\mathcal{P}_x, I) \to (\C^{m-1}, i)$ defined by the $e_i$'s, these operators correspond to a basis for $\mathfrak{su}(m-1)$ as follows:
    \begin{align}
        A_{jk} \mapsto E_{jk} - E_{kj}, \quad B_{jk} \mapsto -i(E_{jk} + E_{kj}), \quad D_{jk} \mapsto i(E_{kk} - E_{jj}),
    \end{align}
    where $E_{jk}$ is the $(m-1) \times (m-1)$ matrix with $1$ in the $jk$-th entry and $0$ elsewhere. We define
    \begin{align}
        \mathfrak{h} := \mathrm{span}\{A_{jk}, B_{jk}, D_{jk}\}_{j,k=1,\ldots,m-1; \, j \neq k} \subset \mathfrak{so}(T_x M).
    \end{align}
    By the above discussion, $\mathfrak{h}$ satisfies item (iv) of the corollary.

    Next, let $C := \Rm(\del_s, U)$ and define
    \begin{align}
        \mathfrak{k} := \mathfrak{h} \oplus \mathrm{span}\{C\} \subset \mathfrak{so}(T_x M).
    \end{align}
    By \eqref{eq:nonzero-assumption-x} and Lemma \ref{lem:curv-computation}, $C$ acts as a nonzero scalar multiple of $I$ on $\mathcal{P}_x$. In view of item (iv) of the corollary, this implies item (iii) of the corollary. By construction, $\mathfrak{k}$ also satisfies item (i) as well as (by Lemma \ref{lem:curv-computation}) item (ii). Thus, the corollary is proved at points $x$ where \eqref{eq:nonzero-assumption-x} holds.
    
    Therefore, it remains to show that \eqref{eq:nonzero-assumption-x} on a dense open subset of $M$. This is equivalent to asking that $\varphi := b^2(b')^2 - 2b^2 + a^2 \neq 0$ and $\psi := 2ab' - 2a'b \neq 0$ on a dense open subset of $(0,\infty)$. Note that $\varphi$ is analytic on $(0,\infty)$ by Proposition \ref{prop:bihol-metric}, and the boundary conditions on $a$ and $b$ give $\lim_{s \to 0} \varphi(s) = -2b(0)^2 < 0$. In particular, $\varphi$ is nonzero for sufficiently small $s$, so it is nonzero on a dense open set. The same reasoning shows that $\psi$ is nonzero on a dense open set. Thus, $\varphi$ and $\psi$ are both nonzero on a dense open set.
\end{proof}

\begin{proof}[Proof of Lemma \ref{lem:cpx-struc-formula}]
    Since $(M,g,J)$ is K\"ahler, we have $\nabla J = 0$. Thus, $J$ commutes with all curvature operators $\Rm(X,Y): T_x M \to T_x M$. In particular, let $x \in M^{\mathrm{prin}}$ and $\mathfrak{h} \subset \mathfrak{k} \subset \mathfrak{so}(T_x M)$ be as in Corollary \ref{cor:subalg-su}. Then $J$ commutes with both $\mathfrak{h}$ and $\mathfrak{k}$.
    
    We have $J(\mathcal{P}_x^\perp) \subseteq \mathcal{P}_x^\perp$. Indeed, suppose to the contrary that for some $V \in \mathcal{P}_x^\perp$ we have $JV = V' + W$, where $V' \in \mathcal{P}_x^\perp$ and $W \in \mathcal{P}_x \setminus \{0\}$. By Corollary \ref{cor:subalg-su}, we can find $T \in \mathfrak{h}$ such that $T(W) \neq 0$. Since $\mathfrak{h}$ is trivial on $\mathcal{P}_x^\perp$, we have $T(V) = T(V') = 0$. Then
    \begin{align}
        0 = J(0) = JT(V) = T(JV) = T(V'+W) = T(W) \neq 0,
    \end{align}
    a contradiction. Hence $J(\mathcal{P}_x^\perp) \subseteq \mathcal{P}_x^\perp$. Since $J$ is invertible and orthogonal, this implies $J(\mathcal{P}_x^\perp) = \mathcal{P}_x^\perp$ and $J(\mathcal{P}_x) = \mathcal{P}_x$, i.e. $J$ preserves the splitting $T_x M = \mathcal{P}_x \oplus \mathcal{P}_x^\perp$.
    
    The commutant of $\mathfrak{u}(m-1)$ in $\mathrm{End}_\R(\C^{m-1})$ consists precisely of complex scalar multiples of the identity. As $\mathfrak{k}$ acts as the standard representation of $\mathfrak{u}(\mathcal{P}_x, I, g) \cong \mathfrak{u}(m-1)$ on $\mathcal{P}_x \cong \C^{m-1}$, its commutant in $\mathrm{End}_\R(\mathcal{P}_x)$ also consists precisely of complex scalar multiples of the identity, where $I$ is the complex structure. Therefore, $J|_{\mathcal{P}_x}$, which commutes with $\mathfrak{k}$, is of the form $(a+bI) \cdot \mathrm{Id}$ for some $a,b \in \R$; since it squares to $-\mathrm{Id}$, we must have $J|_{\mathcal{P}_x} = \pm I$.
    
    Meanwhile, since $\dim_\C \mathcal{P}_x^\perp = 1$ and $J|_{\mathcal{P}_x^\perp}$ is orthogonal, the only two possibilities are $J|_{\mathcal{P}_x^\perp} = J_\perp$ and $J|_{\mathcal{P}_x^\perp} = -J_\perp$, where $J_\perp$ is defined in the lemma.
\end{proof}

We can now finish proving Theorem \ref{thm:calabi-characterization}. The flat case was already handled in case (B) of Proposition \ref{prop:bihol-metric}, so it remains to analyze case (A). In this case, $(M,g,J)$ is diffeomorphic to $\O_{\CP^{m-1}}(-m)$, and
\begin{align} \label{eq:metric-form}
    g = ds^2 + g_{a(s),b(s)} = ds^2 + a(s)^2 \sigma \otimes \sigma + b(s)^2 \pi^*g_{\mathrm{FS}}
\end{align}
on $M^{\mathrm{prin}} = M \setminus \CP^{m-1}$, where $a$ and $b$ are positive analytic functions of $s \in (0,\infty)$.
Using Lemma \ref{lem:cpx-struc-formula} and \eqref{eq:metric-form}, we compute the K\"ahler form on a dense open subset of $M^{\mathrm{prin}}$ to be
\begin{align} \label{eq:kahler-form}
    \omega = g(J(\cdot), \cdot) = \epsilon_{\mathcal{P}^\perp} a(s) ds \wedge \sigma + \epsilon_{\mathcal{P}} b(s)^2 \pi^*\omega_{\mathrm{FS}},
\end{align}
where $\omega_{\mathrm{FS}}$ is the Fubini--Study K\"ahler form, and $\epsilon_{\mathcal{P}}, \epsilon_{\mathcal{P}^\perp}$ are $\{1,-1\}$-valued functions. Since \eqref{eq:kahler-form} holds in a dense open set of $M^{\mathrm{prin}}$, and $\omega$ is continuous, \eqref{eq:kahler-form} actually holds everywhere on $M^{\mathrm{prin}}$. Moreover, since $M^{\mathrm{prin}}$ is connected, the functions $\epsilon_{\mathcal{P}}$ and $\epsilon_{\mathcal{P}^\perp}$ are constant.
By imposing the K\"ahler condition $d\omega = 0$ and using that $d\sigma = 2\pi^*\omega_{\mathrm{FS}}$, we get
\begin{align}
    \left( -2\epsilon_{\mathcal{P}^\perp} a + 2\epsilon_{\mathcal{P}} bb' \right) ds \wedge \pi^*\omega_{\mathrm{FS}} = 0.
\end{align}
As $a,b > 0$, and $b'$ is positive outside a compact set due to the ALE condition, this implies $\epsilon_{\mathcal{P}^\perp} = \epsilon_{\mathcal{P}}$ outside a compact set and hence everywhere. Thus,
\begin{align}
    \omega = \pm \left( a(s) ds \wedge \sigma + b(s)^2 \pi^*\omega_{\mathrm{FS}} \right)
\end{align}
globally on $M^{\mathrm{prin}}$. Up to possible negation of the complex structure, which can be achieved by pulling back using an antiholomorphic isometry, we have arrived exactly at the Calabi ansatz; see \S\ref{subsec:calabi}, in particular \eqref{eq:calabi-kahler}. From the discussion there, it follows that $(M,g,J)$ is biholomorphically isometric to the Calabi space of dimension $m$. This completes the proof of Theorem \ref{thm:calabi-characterization}.

\subsection*{Acknowledgements}

The author wishes to thank his advisor, William Minicozzi, for his continual encouragement and support. He would also like to thank Dain Kim for feedback on an earlier draft of this paper. This work was carried out under the support of a Simons Dissertation Fellowship.

\bibliography{Refs}
\end{document}